\documentclass[aos,preprint]{imsart}
\usepackage{amsmath,amscd}
\usepackage{amssymb,soul}
\usepackage{amsthm,color}

\usepackage{graphicx}

\usepackage{mathrsfs}
\usepackage{hyperref}

\usepackage[colorinlistoftodos]{todonotes}

\graphicspath{{/EPSF/}{../figures/}{figures/}}

\newtheorem{Theorem}{Theorem}[section]

\newtheorem{Lemma}[Theorem]{Lemma}

\newtheorem{Proposition}[Theorem]{Proposition}

\newtheorem{condition}[Theorem]{Condition}
\newtheorem{Definition}[Theorem]{Definition}

\newtheorem{Remark}[Theorem]{Remark}

\numberwithin{equation}{section}

\def \Rm { {\mathbb R}}
\def \N {\mathcal{N}}

\def \<{\langle}
\def \>{\rangle}

\def \p{\partial}

\def \dim{{\mbox {dim}}\,}

\def\Vol{\mbox{Vol}}
\def\V{\mbox{Var}}

\def\R\re
\def\V{\bf V}

\def \re{{\mathbb R}}

\def \V{{\bf V}}

\newcommand{\norm}[1]{\lVert #1 \rVert}

\newcommand{\eps}{\varepsilon}

\newcounter{sidenote}

\sloppy

\begin{document}

\title{Efficient Nonparametric Bayesian \\ Inference  for X-ray transforms} 

\author{\fnms{Fran\c{c}ois Monard,} \ead[label=e1]{fmonard@ucsc.edu}}
\author{\fnms{Richard} \snm{Nickl}\ead[label=e2]{r.nickl@statslab.cam.ac.uk}}
\and
\author{\fnms{Gabriel P.} \snm{Paternain}\ead[label=e3]{g.p.paternain@dpmms.cam.ac.uk}}
\runauthor{F. Monard, R. Nickl, G. Paternain}
\runtitle{Bayesian inference for X-ray transforms}

\affiliation{University of California Santa Cruz \& University of Cambridge}
\address{University of California at Santa Cruz \\ Department of Mathematics \\
\printead{e1}}

\address{University of Cambridge \\ Department of
Pure Mathematics \\ ~and Mathematical Statistics\\
\printead{e2}\\
\printead{e3}}




\date{\today}

\begin{abstract}
We consider the statistical inverse problem of recovering a function $f: M \to \mathbb R$, where $M$ is a smooth compact Riemannian manifold with boundary, from measurements of general $X$-ray transforms $I_a(f)$ of $f$, corrupted by additive Gaussian noise. For $M$ equal to the unit disk with `flat' geometry and $a=0$ this reduces to the standard Radon transform, but our general setting allows for anisotropic media $M$ and can further model local `attenuation' effects -- both highly relevant in practical imaging problems such as SPECT tomography. We study a nonparametric Bayesian inference method based on standard Gaussian process priors for $f$. The posterior reconstruction of $f$ corresponds to a Tikhonov regulariser with a reproducing kernel Hilbert space norm penalty that does not require the calculation of the singular value decomposition of the forward operator $I_a$. We prove Bernstein-von Mises theorems for a large family of one-dimensional linear functionals of $f$, and they entail that posterior-based inferences such as credible sets are valid and optimal from a frequentist point of view. In particular we derive the asymptotic distribution of smooth linear functionals of the Tikhonov regulariser, which attains the semi-parametric information lower bound. The proofs rely on an invertibility result for the `Fisher information' operator $I_a^*I_a$ between suitable function spaces, a result of independent interest that relies on techniques from microlocal analysis. We illustrate the performance of the proposed method via simulations in various settings. 

\end{abstract}

\begin{keyword}[class=MSC]
\kwd[Primary ]{62G20}
\kwd[; secondary ]{58J40, 65R10, 62F15}
\end{keyword}

\begin{keyword}
\kwd{inverse problem, Bernstein-von Mises theorem, MAP estimate, Tikhonov regulariser, Gaussian prior, Radon transform, semi-parametric efficiency}
\end{keyword}

\maketitle

\section{Introduction}

The Radon transform and its variants play a key role in image reconstruction problems, with important applications in physics, engineering and other areas of scientific imaging. The classical case is where a function $f$ in $\mathbb{R}^2$ is reconstructed from integrals over straight lines:
$$
Rf(s,\omega) = \int_{-\infty}^{\infty} f(s\omega + t\omega^{\perp}) \,dt, \quad s \in \mathbb{R},~ \omega \in S^1,
$$
where $\omega^{\perp}$ is the rotation of $\omega$ by $90$ degrees counterclockwise. Often it is natural to confine the function $f$ to a bounded subset $M$ of Euclidean space such as the unit disk, where integrals are now taken along lines connecting boundary points of $M$. Such transforms constitute the basis for imaging methods such as {\it computerised tomography} (CT) and {\it positron emission tomography} (PET), and their mathematical properties are well studied \cite{Hel, N86}. 

Two generalisations of the standard Radon transform are important in applications: a) to model an attenuation or absorption effect within $M$, for example regions of different levels of biological activity in the physical medium $M$, and b) to model anisotropy or physical heterogeneity of $M$, for instance when `shortest travel times' of waves through the earth follow  geodesics of a non-Euclidean metric. The methods used for a) form the basis for SPECT imaging techniques (see for instance \cite{BGH79,Ku}) and b) occurs naturally in seismology, helioseismology and acoustic tomography problems, to mention a few \cite{DJHP,C92,MW79,Sh}. Both effects can be tackled by the general notion of \textit{attenuated geodesic $X$-ray transforms}  that are given by the formula
\begin{equation}\label{fkac}
I_af(x,v) = \int_0^{\tau(x,v)} f(\gamma_{(x,v)}(t)) e^{\int_0^t a(\gamma_{(x,v)}(s))ds}dt,
\end{equation}
where $(x,v)$ parametrises the set of geodesics $\gamma_{(x,v)}$ through $M$, and where $\tau(x,v)$ is the `exit time' of the geodesic started at a point $x$ at the boundary $\partial M$ in the direction of $v$ -- see Section \ref{xray} for precise definitions. The case $a=0$ corresponds to the case when no attenuation is present, and the `geometry' of $M$ is naturally encoded in the set of geodesics.

The mathematical inverse problem here is to recover $f$ from the line integral values $I_a(f)$ along all geodesics. Explicit reconstruction formulas are available in some specific settings: in the case of the flat disk in $\mathbb R^2$ and when $a=0$ this was proved in Radon's celebrated 1917 paper \cite{R17}, and it has been shown in the last 2 decades that explicit inversion formulas hold also in a variety of other more involved settings, namely, ``simple" geometries, see \cite{N02}, \cite{PU04} and the paper \cite{M14} on numerical implementation. It is, however, generally not clear how the inversion step should be done in case of observations corrupted by statistical noise. The general approach to noisy inverse problems that can be found in the statistical literature is typically based on obtaining a singular value decomposition (SVD) of the forward operator $I_a$ and to then construct a procedure based on spectral regularisation, see, e.g., the papers \cite{JS90, GP00, CGPT02, C08, KKLPP10, KVV11, R13}, just to mention a few. For the standard Radon transform such methods have been suggested in the seminal paper by Johnstone and Silverman \cite{JS90} where the SVD basis is given by Zernike polynomials. Another approach consists in ``rebinning'' fan-beam data into parallel data, for which regularisation methods are well-understood thanks to the Fourier-slice theorem; see \cite{N86}. However, neither approach adapts well to more general X-ray transforms: with attenuation and/or general geodesics, the SVD can rarely be computed analytically; nor is rebinning an option, as the space of geodesics is not homogeneous in general, and this removes the possibility of regularisation methods based on parallel geometry.

In the present paper we follow the Bayesian approach to inverse problems \cite{S10, DLSV13, DS16} and study a basic nonparametric inference method built around a standard Gaussian prior for the unknown function $f$ which does not require the identification of the SVD basis of $I_a$. We show how this method can be implemented in a standard way and the resulting maximum a posteriori (MAP) point estimates correspond to a Tikhonov regulariser with a common Sobolev norm penalty, where the Sobolev norm is defined in a classical way (and not implicitly via the SVD of $I_a$). We prove a  Bernstein-von Mises theorem that entails asymptotic normality of various `semi-parametric aspects' of the posterior distribution. From it we deduce in particular asymptotic normality and statistical efficiency of the plug-in Tikhonov regulariser for linear integral functionals $\langle f, \psi \rangle_{L^2},$ where $\psi$ is any smooth test function on $M$. In other words we establish that the semi-parametric information bound in this problem is attained by a standard regularisation method that does not require the calculation of the SVD basis. The proof is based on a combination of ideas from Bayesian nonparametric statistics \cite{CN13, CN14, C14} with an inversion result for the `Fisher information'  operator $I_a^*I_a$ between suitable function spaces (here $I^*_a$ is a natural adjoint operator defined below). 

Heuristically (by analogy to the finite-dimensional linear model) the semi-parametric information lower bound for  inference on $\langle f, \psi \rangle_{L^2}$ should be $\mathcal I_\psi=\|I_a(I_a^*I_a)^{-1}\psi\|_{L^2}^2$, but in our infinite-dimensional setting it has to be clarified for which $\psi$ this quantity is well-defined. In Section \ref{pxray} we invert the `Fisher information' operator by solving the homogeneous Dirichlet boundary value problem for the pseudo-differential operator $I^*_aI_a$, using techniques from micro-local analysis. The mapping properties we deduce imply in particular Theorem \ref{main0}c below, which rigorously establishes that $\mathcal I_\psi$ exists for all smooth $\psi$ (and equals the information lower bound). In our inversion result for $I_a^*I_a$, a key analytical difficulty, explained in more detail at the outset of Section \ref{pxray}, arises at the boundary $\partial M$ of $M$: for example, when applied to smooth (say constant) functions, $I_a^*I_a$ can generate singularities at $\partial M$. And even if one assumes that the unknown $f$, and thus relevant test functions $\psi$, are supported \textit{strictly within} $M$, an application of $(I_a^*I_a)^{-1}$ to such $\psi$ will produce a function that is \textit{fully} supported in $M$ (in view of the non-locality of the inverse operator). Dealing with boundary issues can therefore not be dispensed with. These non-locality effects can also be seen in numerical simulations (Example 3 below).

The connection to partial differential equation (PDE) models just mentioned deserves a final remark: For $M$ a bounded domain in $\mathbb R^d$ with smooth boundary $\partial M$, consider the transport equation
\begin{equation} \label{PDE}
v \cdot \nabla_x u(x, v) + a(x) u(x,v) = -f(x), ~~x \in M,\;\; v \in S^{d-1},
\end{equation}
subject to the boundary condition $u(x,v)=0$ for $x \in \partial M, v \cdot \nu(x) \geq 0$, where $\nu(x)$ is the outer normal at $x$. Along each straight line the transport equation \eqref{PDE} becomes an ordinary differential equation that 
is easily solved to find that the influx trace of $u$ is precisely the function $I_a(f)$.
Our results can thus be cast into the setting of Bayesian inference for parameters of partial differential equations (here $f$) from noisy observations of their solutions (here $I_a(f)$), studied by A. Stuart and others in the inverse problems literature, see \cite{S10, DS16} for an overview and \cite{NS17, N17} for recent related theoretical contributions for parabolic and elliptic PDEs.

This article is organised as follows: In Section \ref{xray} we introduce general $X$-ray transforms and state the invertibility theorem for the information operator. In Section \ref{method} we propose a Bayesian nonparametric method for inference from noisy $X$-ray transform data, and in Section \ref{bvmsec} we give the theoretical results about the performance of the Bayes method and the associated Tikhonov regulariser. All proofs can be found in subsequent sections.

\section{Main results}

\subsection{Geodesic $X$-ray transforms and an inversion result for the information operator}\label{xray}

In this section we introduce the geodesic X-ray transform $I$ of a compact Riemannian manifold with boundary as well as the attenuated version $I_{a}$. Our main objective is to establish mapping properties for the normal (information) operator $I_{a}^*I_{a}$.

The geodesic X-ray transform acts on functions defined on a compact oriented $d$-dimensional Riemannian manifold $(M,g)$ with boundary $\p M$ ($d\geq 2$). In essence, it integrates the function along all possible geodesics running between boundary points. To define the transform with precision we need to introduce some language that conveniently describes the geodesics on a manifold. Geodesics in a Riemannian manifold can be defined in many ways, but for our purposes it suffices to say that they are curves that locally minimize the distance between two points. It turns out that they obey a second order ordinary differential equation on $M$ and thus a geodesic is uniquely determined by its initial position and velocity (i.e. a point in phase space).
Geodesics travel at constant speed, so we might as well from now on fix the speed to be one.
It is hence convenient to pack positions and velocities together in what we call the {\it unit sphere bundle} $SM$.
This consists of pairs $(x,v)$, where $x\in M$ and $v$ is a tangent vector at $x$ with norm $|v|_g=1$, where $g$ is the inner product in the tangent space at $x$ (i.e. the Riemannian metric).

Unit tangent vectors at the boundary of $M$ constitute the boundary $\partial SM$ of $SM$ and will play a special role. Specifically
\[\partial SM:=\{(x,v)\in SM:\;x\in\partial M\}.\]
We will need to distinguish those tangent vectors pointing inside (``influx boundary'') and those pointing outside (``outflux boundary''), so we define
 two subsets of $\p SM$
$$\p_{\pm}SM:=\{(x,v)\in \p SM : \pm\<v,\nu(x)\>_g\leq 0\},$$
where $\nu(x)$ is the outward unit normal vector on $\p M$ at $x$.

Given $(x,v)\in SM$, we denote by $\gamma_{x,v}:\mathbb{R}\to M$ the unique geodesic with $\gamma_{x,v}(0)=x$ and $ \frac{d\gamma_{x,v}}{dt}(0)=v$ and let $\tau(x,v)$ be the first time when the geodesic $\gamma_{x,v}$  exits $M$.

We say that $(M,g)$ is {\it non-trapping} if $\tau(x,v)<\infty$ for all $(x,v)\in SM$. In this paper we will work exclusively with non-trapping manifolds and this is all we need to define the geodesic X-ray transform. Let $C^\infty(W)$ denote the set of infinitely differentiable functions on a manifold $W$.

\begin{Definition}{\rm 
The \emph{geodesic X-ray transform} of a function $f \in C^{\infty}(M)$ is the function $If:\partial_{+}SM\to\mathbb{R}$ given by
\begin{equation*}
If(x,v)=\int_{0}^{\tau(x,v)}f(\gamma_{x,v}(t)) \,dt,\quad
(x,v)\in \partial_{+} SM.
\end{equation*}
}
\end{Definition}

In order to obtain good mapping properties for $I$, we need additional conditions on $M$. The second condition that we will impose is that $M$ has {\it strictly convex boundary}, i.e. the second fundamental form $\Pi_{x}(v,v):=\langle \nabla_{v}\nu,v\rangle_{g}$, for $v$ any tangent vector at $x$, is positive definite for all $x\in \partial M$. 
This ensures that $I:C^{\infty}(M)\rightarrow C^{\infty}(\partial_{+} SM)$ since strict convexity of the boundary implies $\tau\in C^{\infty}(\partial_{+}SM)$ \cite[Lemma 4.1.1]{Sharafudtinov1994}.

Effectively, the influx boundary $\partial_{+}SM$ parametrizes all geodesics going through $M$. The space of geodesics carries a natural measure (or volume form) which in turn equips $\partial_{+}SM$ with the measure
 \[d\mu(x,v):=|\langle \nu(x), v \rangle_g|dxdv\]
and we shall denote $L^{2}_{\mu}(\partial_{+}SM)$ the space of functions on $\partial_{+}SM$ with inner product
$$\langle u,w \rangle_{L^{2}_{\mu}(\partial_{+}SM)}=\displaystyle\int_{\partial_{+}SM}u w\,d\mu.$$
The measure $d\mu$ is natural in the following sense. If we consider the canonical map
\[\Phi:\{(x,v,t):\;(x,v)\in\partial_{+}(SM);\;t\in [0,\tau(x,v)]\}\to SM\]
given by $\Phi(x,v,t)=(\gamma_{(x,v)}(t),\dot{\gamma}_{(x,v)}(t))$ (the geodesic flow) then a calculation shows
that
\begin{equation}
\Phi^*(\Theta)=|\langle \nu(x), v \rangle_g|\,dxdvdt
\label{eq:justmu}
\end{equation}
where $\Theta$ is the canonical volume form of $SM$ (also called Liouville form in classical mechanics)
and $\Phi^*(\Theta)$ is a new volume form obtained by pulling back $\Theta$ via $\Phi$.


It is not hard to prove that $I$ extends as a bounded linear map \cite[Theorem 4.2.1]{Sharafudtinov1994}
\[I:L^{2}(M)\to L^{2}_{\mu}(\partial_{+}SM)\]
and hence we have a well defined adjoint $I^*:L^{2}_{\mu}(\partial_{+}SM) \to L^{2}(M)$ that can be easily computed using \eqref{eq:justmu}.
Explicitly
\[I^*w(x)=\int_{S_{x}M}w^{\sharp}(x,v)\,dv,\]
where $w^{\sharp}(x,v):=w(\gamma_{x,v}(-\tau(x,-v)),\dot{\gamma}_{x,v}(-\tau(x,-v)))$ and $S_{x}M$ denotes the set of unit tangent vectors at $x$.
In the literature that discusses the standard Radon transform, this operator is usually referred to as {\it back-projection} operator and appears prominently in the celebrated {\it filtered back-projection formula} \cite{R17,N86} (see \cite{Ku} for an excellent recent presentation of the classical Radon transform).
We can now define the `information operator' $I^*I:L^{2}(M)\to L^{2}(M)$. 

The third and final condition that we will impose on $M$ is that it is {\it free of conjugate points}. Intuitively, this means that beams of geodesics emanating from a point do not focus on or converge to another point (as it would happen for the geodesics on the sphere connecting south and north poles).  Equivalently, two points in $M$ are joined by a unique geodesic (note that $M$ non-trapping and with strictly convex boundary implies that $M$ is contractible \cite{To}). This property is fundamental for us since it implies that the information operator is an elliptic pseudo-differential operator of order $-1$. Manifolds satisfying the three conditions -- non-trapping, strict convexity of the boundary and absence of conjugate points -- are called {\it simple}.

The theory of the X-ray transform is well-developed in the case of simple manifolds. If one considers only non-trapping manifolds with strictly convex boundary but allows for conjugate points, the operator $I^*I$ loses its pseudo-differential character. Strict convexity of the boundary is seen as less essential, but dropping it causes technical complications mostly arising from the non-continuity of the exit time $\tau$.

The discussion above extends without difficulties to the attenuated case. The {\it attenuated geodesic X-ray transform} $I_a f$ of a function $f \in C^{\infty}(M)$ with attenuation coefficient $a \in C^{\infty}(M)$ can be defined as the integral:
$$
I_a f(x,v) := \int_0^{\tau(x,v)} f(\gamma_{(x,v)}(t)) \text{exp}\left[ \int_0^t a(\gamma_{(x,v)}(s)) \,ds \right] dt,\;\, (x,v) \in \partial_+ SM.
$$
The transform $I_{a}$ extends as a bounded operator $I_{a}:L^{2}(M)\to L^{2}_{\mu}(\partial_{+}SM)$ with adjoint $I^*_{a}:L^2_{\mu}(\partial_{+}SM)\to L^{2}(M)$. In the case of simple manifolds, the information operator $I_{a}^{*}I_{a}$ displays the same features as $I^*I$.

We will consider noisy observations $Y$ of the X-ray transform $I_a f$ of an unknown function $f$. If $\mathbb W$ is a standard Gaussian white noise in the Hilbert space $L^2_\mu(\partial_+SM)$ and $\varepsilon>0$ a noise level, our data is
\begin{equation} \label{model0}
Y = I_af + \varepsilon \mathbb W.
\end{equation}
Up to a discretisation step described in the next section and the usual `Gaussianisation' of Poisson count data, this is a realistic approximate noise model for physical X-ray transform measurements. Assuming this model the following properties of the information operator $I_a^*I_a$ and its inverse will be crucial for the theory that follows: They imply that the inverse Fisher information exists for a variety of semi-parametric inference problems. Their proofs using techniques from microlocal analysis are given in Section \ref{pxray} below.

\begin{Theorem}\label{main0} Let $M$ be a simple manifold and suppose 
$$I_{a}:C^{\infty}(M)\to C^{\infty}(\partial_{+}SM)$$
is injective.
Let $d_M$ be any $C^\infty$ function that equals (the Riemannian) $dist(\cdot,\partial M)$ near the boundary and is positive on the interior of $M$.

a) The information operator $I^*_aI_a$ defines a bijection between $\{d_M^{-1/2} g: g \in C^\infty(M)\}$ and $C^\infty(M)$ and hence has a well defined inverse $$(I_a^*I_a)^{-1}:C^\infty(M) \to \{d_M^{-1/2} g: g \in C^\infty(M)\}$$ such that $I_a^*I_a (I_a^*I_a)^{-1} \psi = \psi$ for all $\psi \in C^\infty (M)$.

b) We have for some constant $c>0$ that depends only on $d,M$ $$\|I_a(d_M^{-1/2}h)\|_{L^2_\mu(\partial_+ SM)} \le c \|h\|_\infty$$
for every $h \in C(M)$.

c) For any $\psi \in C^\infty(M)$ we have that $I_a(I_a^*I_a)^{-1}\psi \in L^2_\mu(\partial_+ SM)$ and  
\begin{equation} \label{infobd}
\|I_a(I_a^*I_a)^{-1}\psi\|^2_{L^2_\mu(\partial_+(SM))}<\infty
\end{equation}
is the Cram\'er-Rao lower bound (inverse Fisher information) for estimation of the parameter $\langle f, \psi \rangle_{L^2(M)}$ in the model (\ref{model0}). 
\end{Theorem}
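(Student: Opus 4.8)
The plan is to regard parts (a) and (b) as the analytic core — the genuine microlocal work, to be carried out in Section \ref{pxray} — and to deduce (c) quickly from them together with a standard computation in the theory of Gaussian shift experiments. The main obstacle is the boundary analysis in (a): showing that the elliptic operator $I_a^*I_a$ intertwines the weighted space $d_M^{-1/2}C^\infty(M)$ with $C^\infty(M)$, i.e.\ that inverting $I_a^*I_a$ against smooth data forces a boundary singularity of order exactly $d_M^{-1/2}$ and no worse.

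For (a), write $N_a := I_a^*I_a$. In the interior of $M$, simplicity (in particular absence of conjugate points) guarantees that $N_a$ is a classical elliptic pseudodifferential operator of order $-1$, so all the difficulty sits at $\partial M$. I would embed $(M,g)$ into a slightly larger manifold $(\widetilde M,g)$ of the same dimension with $\partial M$ in its interior, extend $a$ smoothly, and analyse $N_a$ acting on functions on $M$ through the geodesic-flow parametrisation $\Phi$ of \eqref{eq:justmu}. Passing to boundary normal coordinates $(t,y)$ with $t=d_M$ near $\partial M$ exhibits the conormal structure of the Schwartz kernel of $N_a$ and, crucially, shows that the finite-exit-time geometry produces weights $t^{\pm 1/2}$ at the boundary: $N_a$ does not preserve $C^\infty(M)$, but the a priori $d_M^{-1/2}$-type boundary singularity of $N_a(d_M^{-1/2}h)$ cancels, so $N_a$ maps $d_M^{-1/2}C^\infty(M)$ into $C^\infty(M)$. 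Ellipticity then furnishes a parametrix $Q$ (in the boundary calculus adapted to this weight) with $QN_a=\mathrm{Id}+R$, $R$ smoothing, whence $N_a$ is Fredholm of index zero between the appropriate weighted Sobolev scales, and elliptic regularity up to $\partial M$ upgrades this to the stated $C^\infty$ bijection once the kernel is known to be trivial. For the latter, if $N_au=0$ with $u=d_M^{-1/2}h$, a limiting argument (cutting $u$ off near $\partial M$ and using part (b) to pass to the limit) gives $\|I_au\|_{L^2_\mu(\partial_+SM)}^2=0$, so $I_au=0$; the injectivity hypothesis on $I_a$ — whose extension to $d_M^{-1/2}C^\infty(M)\subset L^1(M)$ furnished by (b) is still injective — then forces $u=0$, and Fredholm index zero yields the inverse $(I_a^*I_a)^{-1}:C^\infty(M)\to d_M^{-1/2}C^\infty(M)$ with $I_a^*I_a(I_a^*I_a)^{-1}\psi=\psi$.

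Part (b) is an elementary but geometry-sensitive estimate on the integral defining $I_a$. Factoring out the bounded exponential attenuation weight, $|I_a(d_M^{-1/2}h)(x,v)|\le C\|h\|_\infty\int_0^{\tau(x,v)}d_M(\gamma_{x,v}(t))^{-1/2}\,dt$, so since $\mu(\partial_+SM)<\infty$ it suffices to bound this integral uniformly over $(x,v)\in\partial_+SM$. For near-grazing directions, strict convexity of $\partial M$ gives $d_M(\gamma_{x,v}(t))\gtrsim t(\tau-t)$ along the chord, and $\int_0^\tau (t(\tau-t))^{-1/2}\,dt$ is a Beta integral independent of $\tau$; for transversal directions $d_M$ decays only linearly at the two endpoints of the chord and is bounded below on a fixed proportion of its interior, so the integral is again finite. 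A compactness argument over $\partial_+SM$ makes the bound uniform.

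Part (c) then follows. By (a), $(I_a^*I_a)^{-1}\psi=d_M^{-1/2}h$ for some $h\in C^\infty(M)\subset C(M)$, so (b) gives $I_a(I_a^*I_a)^{-1}\psi=I_a(d_M^{-1/2}h)\in L^2_\mu(\partial_+SM)$ with $\|I_a(I_a^*I_a)^{-1}\psi\|_{L^2_\mu(\partial_+SM)}\le c\|h\|_\infty<\infty$, which is \eqref{infobd}. To identify this quantity, set $\tilde\psi:=I_a(I_a^*I_a)^{-1}\psi\in L^2_\mu(\partial_+SM)$; since the operator $I_a^*I_a$ in (a) is the honest composition of $I_a^*$ with the (b)-extension of $I_a$, we have $I_a^*\tilde\psi=I_a^*I_a(I_a^*I_a)^{-1}\psi=\psi$, hence $\langle f,\psi\rangle_{L^2(M)}=\langle f,I_a^*\tilde\psi\rangle_{L^2(M)}=\langle I_af,\tilde\psi\rangle_{L^2_\mu(\partial_+SM)}$ for all $f$. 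Thus in the model \eqref{model0} the linear estimator $\langle Y,\tilde\psi\rangle_{L^2_\mu(\partial_+SM)}$ is unbiased for $\langle f,\psi\rangle_{L^2(M)}$ with variance $\varepsilon^2\|\tilde\psi\|_{L^2_\mu(\partial_+SM)}^2$; moreover $\tilde\psi\in\overline{\mathrm{Ran}\,I_a}$ while any competitor $\tilde\psi'$ with $I_a^*\tilde\psi'=\psi$ satisfies $\tilde\psi'-\tilde\psi\in\ker I_a^*=(\mathrm{Ran}\,I_a)^\perp$, so $\|\tilde\psi'\|_{L^2_\mu(\partial_+SM)}^2\ge\|\tilde\psi\|_{L^2_\mu(\partial_+SM)}^2$. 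Hence $\tilde\psi$ realises the minimal variance among unbiased linear estimators, and the H\'ajek--Le Cam theory for Gaussian shift experiments (as used in the Bernstein--von Mises arguments of the paper) identifies $\|\tilde\psi\|_{L^2_\mu(\partial_+SM)}^2=\|I_a(I_a^*I_a)^{-1}\psi\|_{L^2_\mu(\partial_+SM)}^2$ as the inverse Fisher information, i.e.\ \eqref{infobd} is the Cram\'er--Rao / semiparametric efficiency lower bound for estimating $\langle f,\psi\rangle_{L^2(M)}$.
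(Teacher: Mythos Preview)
Your plan for (b) and (c) matches the paper essentially verbatim: the geometric estimate $d_M(\gamma_{x,v}(t))\gtrsim t(\tau-t)$ and the Beta-integral computation are exactly Lemma~\ref{lem:keyfact} and Lemma~\ref{lemma:weight}, and your derivation of (c) from (a)--(b) via \eqref{ch25} and the LAN/Gaussian-shift framework is precisely the paper's argument in the paragraph following Theorem~\ref{main0}.

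Part (a), however, has two gaps relative to what the paper actually does. First, ``a parametrix in the boundary calculus adapted to this weight'' is doing too much work: the paper identifies this calculus concretely as the H\"ormander--Grubb $\mu$-transmission theory with $\mu=-1/2$, and the nontrivial input that places $N_a$ in this calculus is Lemma~\ref{lem:even}, the observation that the full symbol of $N_a$ is \emph{even} (so that \eqref{eq:mutransmission} holds with $m=-1$, $\mu=-1/2$). Without that symbol computation you do not know that the boundary weight is exactly $d_M^{-1/2}$ rather than some other power. Second, the Fredholm claim comes from Grubb's machinery, but ``index zero'' is asserted without argument; the paper does not use an index computation. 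Instead it proves kernel and cokernel trivial separately: for the cokernel it computes $(r_MP\iota)^*=r_MP$ via the duality \eqref{eq:tmp} and reduces to the kernel case. For the kernel, your limiting argument gives $I_au=0$, but you then invoke ``injectivity of $I_a$ on $d_M^{-1/2}C^\infty(M)$'', which is \emph{not} in the hypotheses---only injectivity on $C^\infty(M)$ is assumed. The paper closes this gap by extending $u$ by zero to the enlarged simple manifold $M_1$, observing $N_1u_1=0$, and using interior ellipticity of $N_1$ on $U_1\supset M$ to bootstrap $u_1\in C^\infty(U_1)$, whence $u\in C^\infty(M)$ and the hypothesis applies. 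This elliptic-regularity step is essential and is missing from your sketch.
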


Injectivity of $I_{a}$ for simple manifolds is known in virtually all cases, so assuming it in the theorem is not a serious restriction. When $a=0$, injectivity of $I$ is a classical landmark result due to Mukhometov \cite{Mu}. In two dimensions, injectivity of $I_{a}$ is known in general \cite{SaU} and in dimensions $\geq 3$, $I_{a}$ is injective as long as $M$ admits a strictly convex function \cite{UV,PSUZ16}.

Given parts a) and b) the proof of the first assertion in Theorem \ref{main0}c is straightforward. The second assertion in Part c) then follows from standard semi-parametric theory (Chapter 25 in \cite{vdV98}):  An application of Lemma \ref{LAN} below implies that the model (\ref{model0}) is locally (asymptotically) normal (LAN) with LAN-norm $\|\cdot\|_{LAN}=\|I_a(\cdot)\|_{L^2_\mu(\partial_+ SM)}$, and since we have for all $h \in L^2(M)$ that
\begin{equation}\label{ch25}
\langle I_a h, I_a \tilde \psi \rangle_{L^2_\mu(\partial_+ SM)} = \langle h, \psi \rangle_{L^2(M)},~~~ \tilde \psi = (I_a^*I_a)^{-1}\psi,
\end{equation}
we can argue as in Section 7.4 in \cite{N17} to deduce the information lower bound $\|\tilde \psi\|_{LAN}^2$ from (\ref{infobd}). This identifies in particular the (lower bound for the) asymptotic minimax constant 
\begin{equation}\label{crmin}
\liminf_{\varepsilon \to 0} \inf_{\hat \psi} \sup_{f} \varepsilon^{-2} E_f(\hat \psi - \langle f,\psi\rangle)^2 \ge \|I_a(I_a^*I_a)^{-1}\psi\|^2_{L^2_\mu(\partial_+(SM))}
\end{equation}
where the infimum is taken over all estimators $\hat \psi=\hat \psi(Y)$ of $\langle f, \psi \rangle$ based on observations in the model (\ref{model0}), and where the supremum in $f$ extends over arbitrary $L^2$-neighbourhoods of $f_0$ of diameter $\varepsilon$.

\subsection{Bayesian Inference with Gaussian priors} \label{method}

We now address the statistical problem of recovering $f$ from a noisy observation of the X-ray transform $I_a f$, and propose numerical illustrations of the feasibility of the approach to general geometries. In what follows, we will take $M = \{(x_1,x_2)\in \Rm^2, x_1^2 + x_2^2 \le 1\}$, endowed with either the Euclidean metric $g_e = dx_1^2 + dx_2^2$ (generating the classical Radon transform), or the metric 
\begin{align}
    \begin{split}
	g(x_1,x_2) &= e^{2\lambda(x_1,x_2)} (dx_1^2 + dx_2^2), \\ 
	\lambda(x_1,x_2) &:= 0.45 (e^{-8((x_1-0.3)^2 + (x_2-0.3)^2)} - e^{-8((x_1+0.3)^2 + (x_2+0.3)^2)}),
    \end{split}    
    \label{eq:metric}
\end{align}
see Fig. \ref{fig:setting}. We will concentrate on the unattenuated case $a=0$ for conciseness. 
We parameterise $\partial_+ SM$ using {\em fan-beam coordinates}, defined for $(\beta,\alpha)\in [0,2\pi)\times (-\pi/2,\pi/2)$ by 
\begin{align*}
    (\beta,\alpha) \mapsto \left( x = \left( \begin{smallmatrix} \cos\beta \\ \sin\beta \end{smallmatrix} \right), v = e^{-\lambda(x)} \left( \begin{smallmatrix} \cos(\beta+\pi+\alpha) \\ \sin(\beta+\pi+\alpha) \end{smallmatrix} \right) \right)\in \partial_+ SM,
\end{align*}
with area element $d\mu(x,v) = \cos\alpha\ d\alpha\ d\beta$.

\medskip
\noindent {\bf Discretisation.} We assume in pratice that we are given noisy data at geodesics $\{\gamma_i\}_{i=1}^n$ emanating from a fan-beam sample $\{(\beta_i,\alpha_i)\}_{i=1}^n$, and that the unknown function is modelled as a finite sum $f = \sum_{j=1}^m f_j\phi_j$. Specifically, the domain is a triangular mesh with $m$ nodes $x_1, \dots, x_m$ (see Fig. \ref{fig:setting}, left), so that $f_j$ represents $f(x_j)$ and $\phi_j$ is a piecewise linear function on the mesh, uniquely defined by the relation $\phi_j(x_k) = \delta_{jk}$. We then seek to reconstruct $X = (f_1,\dots,f_m)^T$ from the observation
\begin{align} \label{model}
    Y = AX + \varepsilon W^{(n)}, \qquad Y = (y_1,\dots,y_n)^T,
\end{align}
where the discretised forward operator $A$ is an $n\times m$ matrix with entries $A_{ij} = I \phi_j (\beta_i,\alpha_i)$, and $W^{(n)}$ is Gaussian white noise on an {\it ad hoc} $n$-dimensional subspace of $L^2_\mu (\partial_+ SM)$. By {\it ad hoc} we mean that for this problem to be a faithful discretisation of the continuous one \eqref{model0}, one must endow the domain and range of $A:\Rm^m\to \Rm^n$ with inner products (described by matrices ${\mathsf m}$ and ${\mathsf n}$, respectively) which mimick the continuous inner products on $L^2(M)$ and $L^2(\partial_+ SM)$: 
More precisely, if $f = \sum_j f_j \phi_j$ and $f' = \sum_j f'_j \phi_j$, then 
\begin{align*}
    \int_M f(x) f'(x)\ dx = \sum_{i,j = 1}^m {\mathsf m}_{ij} f_i f'_j, \qquad {\mathsf m}_{ij} := \int_M \phi_i(x) \phi_j(x)\ dx.
\end{align*}
Similarly, assuming here that the data comes from a uniform cartesian discretisation of $\partial_+ SM$ of size $n = n_\beta n_\alpha$, the $n$-dimensional subspace of $L^2_\mu (\partial_+ SM)$ on which \eqref{model} is posed has an orthogonal basis $\{e_i\}_{i=1}^n$, where $e_i$ equals $1$ on a pixel of dimensions $\frac{2\pi}{n_\beta}\times\frac{\pi}{n_\alpha}$ centered at $(\beta_i,\alpha_i)$ and $0$ elsewhere. A data sample $Y = (y_1,\dots,y_n)^T$ can then be viewed as a function $y = \sum_{i} y_i e_i \in L^2_\mu(\partial_+ SM)$, so that
\begin{align*}
    \int_{\partial_+ SM} y(\beta,\alpha) &y'(\beta,\alpha)\ \cos\alpha\ d\alpha\ d\beta = \sum_{i,j = 1}^n {\mathsf n}_{ij} y_i y'_j, \qquad \text{where} \\
    {\mathsf n}_{ij} &= \int_{\partial_+ SM} e_i(\beta,\alpha) e_j(\beta,\alpha)\ \cos\alpha\ d\alpha\ d\beta \approx \delta_{ij} \frac{2\pi^2}{n_\beta n_\alpha} \cos \alpha_i. 
\end{align*}
In particular, $\{e_i / \sqrt{ {\mathsf n}_{ii}}\}_{i=1}^n$ is an orthonormal family, and a realisation of a standard Gaussian white noise on this basis takes the form $\sum_{i=1}^n w_i (e_i/\sqrt{{\mathsf n}_{ii}})$ with $w_i\sim {\mathcal N}(0,1)$. In short, i.i.d. standard Gaussian noise in the `ad hoc' space used for discretisation equals $W^{(n)} \sim {\mathcal N}(0,{\mathsf n}^{-1})$ in usual coordinates. The computation of the forward matrix $A$ is now done by solving geodesics by ODEs first, then computing the integrals via Riemann sums, as in \cite{M14}. The main difference is that here $f$ is defined on an unstructured triangular mesh generated via the package \cite{Se}, and interpolating these values at any point is done using barycentric coordinates.

\begin{figure}[htpb]
    \centering
    \includegraphics[trim= 60 10 50 10, clip, height=0.19\textheight]{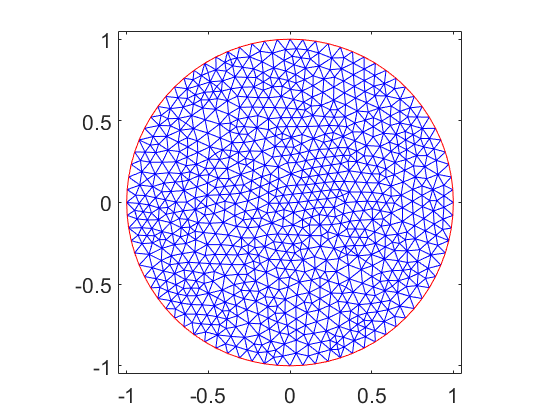}
    \includegraphics[trim= 50 10 50 10, clip, height=0.19\textheight]{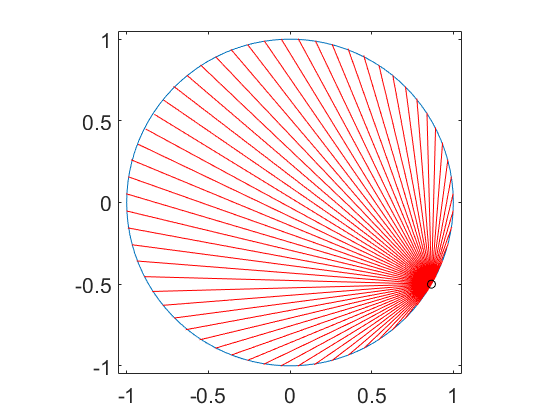}
    \includegraphics[trim= 30 10 40 10, clip, height=0.19\textheight]{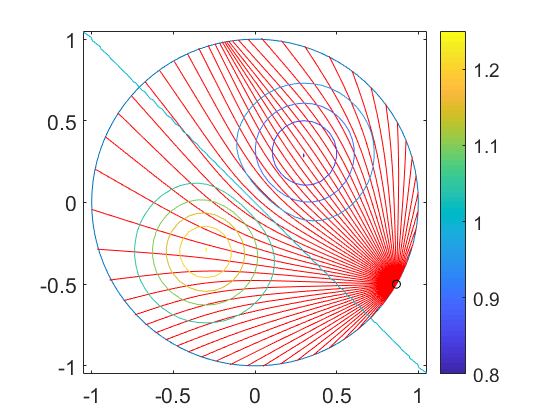}    
    \caption{Left to right: example of a mesh with 886 nodes; geodesics for the Euclidean geometry; geodesics for the metric given in \eqref{eq:metric}, superimposed to a contour plot of the ``sound speed'' $e^{-\lambda}$.}
    \label{fig:setting}
\end{figure}

\noindent{\bf Approach.} We take a basic Bayesian approach to this inverse problem: given a Gaussian prior $\Pi$ on the function $f$ (or its coefficient vector $X$), we assume the $Y_i|f$'s are generated from model \eqref{model} conditional on $f$, and obtain the posterior distribution on $f|Y$ by an application of Bayes' rule. 
For inference one needs to be able to calculate the posterior distribution, at least approximately. To this end, with the notation above, since $W^{(n)} \sim {\cal N} (0,{\mathsf n}^{-1})$, then $Y|X \sim {\cal N} (AX,\varepsilon^2 {\mathsf n}^{-1})$. Assuming the prior distribution is of the form $X \sim \N (0,\sigma^{-1} \Gamma)$ where the prior covariance matrix $\Gamma$ and the precision parameter $\sigma$ are known, a standard calculation gives the posterior distribution $X|Y \sim \N (X_c, \Gamma_c)$, where 
\begin{align}
    \Gamma_c := (\varepsilon^{-2} A^T {\mathsf n} A + \sigma\Gamma^{-1})^{-1}, \qquad X_c := \varepsilon^{-2} \Gamma_c A^T {\mathsf n} Y,
    \label{eq:posterior}
\end{align}
where $A^T$ denotes the standard matrix transpose. As the posterior distribution is Gaussian, the posterior mean equals the posterior mode $X_c$ (or MAP-estimate) and thus the Tikhonov-regulariser, see \cite{DLSV13} or \cite{GvdV17}. As a consequence the centre of mass of the posterior distribution is an approximation to the solution of the optimisation problem 
$$\min_{f \in V_\Pi} \left[ \varepsilon^{-2} \|Y-I_af\|^2_{L^2_\mu (\partial_+ SM)} + \sigma \|f\|_{V_\Pi}^2\right],$$ 
discretised into 
$$\min_{X \in \Rm^m} \left[ \varepsilon^{-2}  (Y-AX)^T {\mathsf n} (Y-AX) + \sigma X^T \Gamma^{-1} X\right],$$ 
where $V_\Pi \subset L^2(M)$ is the reproducing kernel Hilbert space (RKHS) of $\Pi$. [See \cite{GN16, GvdV17} for standard properties of Gaussian processes and their RKHS.] Natural choices for $V_\Pi$ are those coming from kernel-type Gaussian process whose covariance is prescribed by a fixed positive definite function $K(\cdot, \cdot)$, see also Remark \ref{sobp} below. In particular, we choose here the Mat\'ern kernel $K(x_i,x_j) = k_{\nu,\ell}(|x_i-x_j|)$, where 
\begin{align*}
    k_{\nu,\ell} (r) := \frac{2^{1-\nu}}{\Gamma(\nu)} \left( \frac{\sqrt{2\nu}r}{\ell} \right)^\nu K_\nu \left(\frac{\sqrt{2\nu}r}{\ell}\right),
\end{align*}
and where $K_\nu$ denotes the modified Bessel function of the second kind. In the examples below, the four parameters $(\varepsilon, \sigma, \nu, \ell)$ are assumed to be known. To address uncertainty on these parameters, hierarchical models can be considered and efficient methods can be derived to compute features of the posterior distribution, see, e.g., the recent article \cite{BSV}. 

\medskip
\noindent {\bf Experiments.} The phantoms used are given Figure \ref{fig:phantoms}, $f_1$ is the so-called `modified Shepp-Logan' phantom (compactly supported) and $f_2 = h_2/ \sqrt{d_M}$ with $h_2 \in C^\infty(\overline{M})$ and $d_M(x,y) := \frac{1}{2} (1-x^2-y^2)$ (as discussed in the next section, the scaling by $\sqrt {d_M}$ is natural in this inverse problem). In all examples, the mesh has $m=6027$ nodes and we use $n = 14450$ geodesics. The other parameters are given by $\varepsilon = 10^{-3}$, $\sigma=1$, $\nu= 1.5$ and $\ell = 0.2$. Sampling the posterior distribution is done by drawing $X = X_c + GZ$, where $Z\sim \N(0,I_{m\times m})$ and $G$ is a matrix satisfying $GG^T = \Gamma_c$ (defined in \eqref{eq:posterior}), obtained for instance by Cholesky decomposition (here one may notice that this step is a much cheaper option than computing the SVD of the information operator). To compute forward data, we use the code \cite{M14} which allows to produce 'true' data with higher accuracy, thereby avoiding the {\it inverse crime} of using a forward and inverse solver on the same computational grid. 

\begin{figure}[htpb]
    \centering
    \includegraphics[trim= 30 10 30 10, clip, height=0.20\textheight]{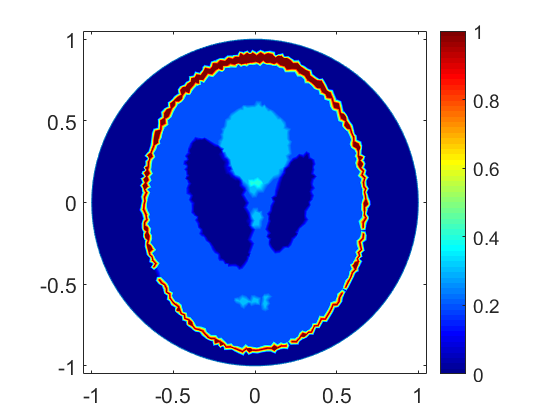} \ \ 
    \includegraphics[trim= 30 10 30 10, clip, height=0.20\textheight]{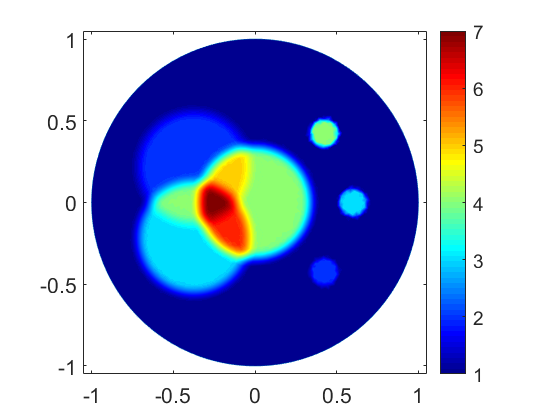}
    \caption{Left: $f_1$, the Shepp-Logan phantom (compactly supported). Right: the function $h_2$ such that $f_2 :=  h_2/\sqrt{d_M}$ blows up at the boundary.}
    \label{fig:phantoms}
\end{figure}

\medskip
\noindent {\bf Example 1.} Euclidean geometry, reconstruction of $f_1$ from its Euclidean ray transform. We compute the posterior distribution and visualise the mean and sample draws. Results are visualised Fig. \ref{fig:Ex1}. As the Shepp-Logan phantom has spatial variations which may be too sharp to be captured by the prior, we expect over-smoothed reconstructions near sharp edges. This can be seen, e.g., on Fig. \ref{fig:Ex1} (bottom-right). 

\begin{figure}[htpb]
    \centering
    \includegraphics[trim= 20 10 20 10, clip, height=0.20\textheight]{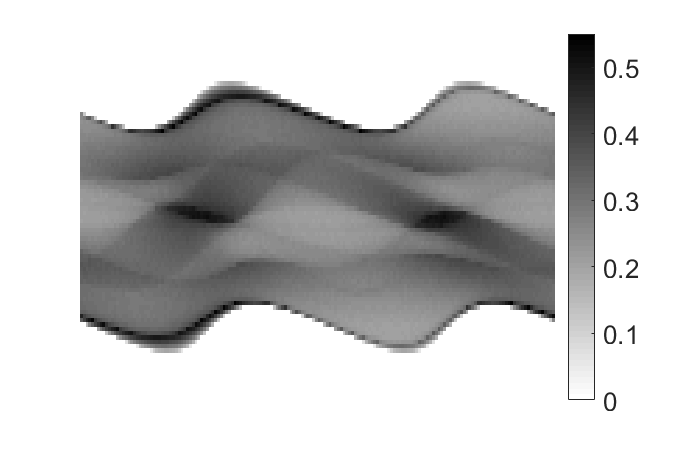}
    \includegraphics[trim= 20 10 20 10, clip, height=0.20\textheight]{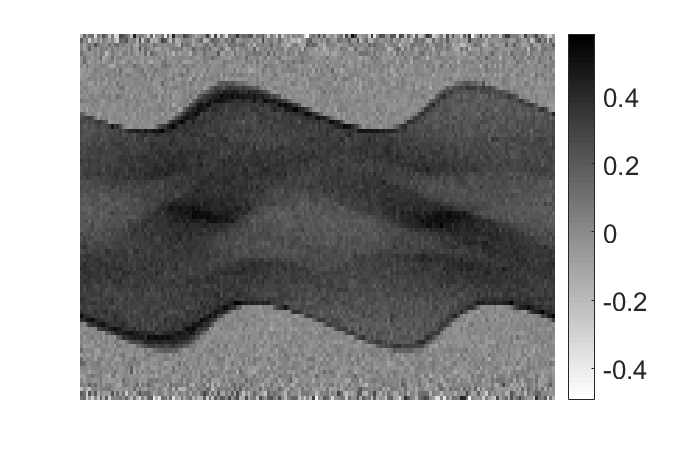} \\
    \includegraphics[height=0.20\textheight]{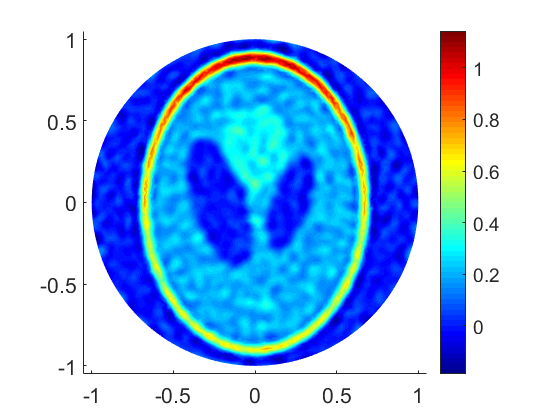}
    \includegraphics[height=0.20\textheight]{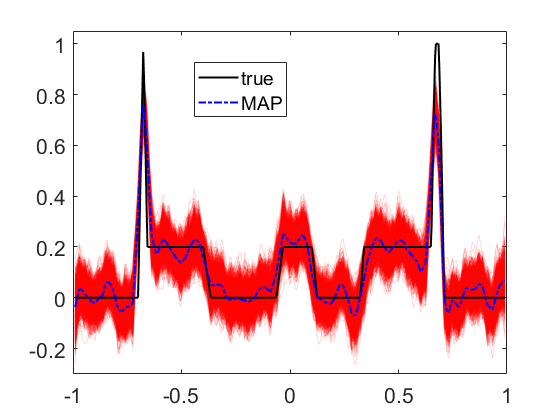}
    \caption{Example 1. Left to right. Top row: $If_1$; $If_1$ noisy (with $\beta$ on the horizontal axis and $\alpha$ on the vertical axis). Bottom row: posterior mean; cross-section on $\{x_2 = 0\}$ of 2000 posterior samples.}
    \label{fig:Ex1}
\end{figure}

\medskip
\noindent {\bf Example 2.} Same as Example 1, except that the geometry is the non-Euclidean one characterized by the metric in \eqref{eq:metric} and geodesics displayed Fig. \ref{fig:setting} (right). Results are displayed Fig. \ref{fig:Ex2}, illustrating the applicability of the approach to non-standard geometries.

\begin{figure}[htpb]
    \centering
    \includegraphics[trim= 20 10 20 10, clip, height=0.20\textheight]{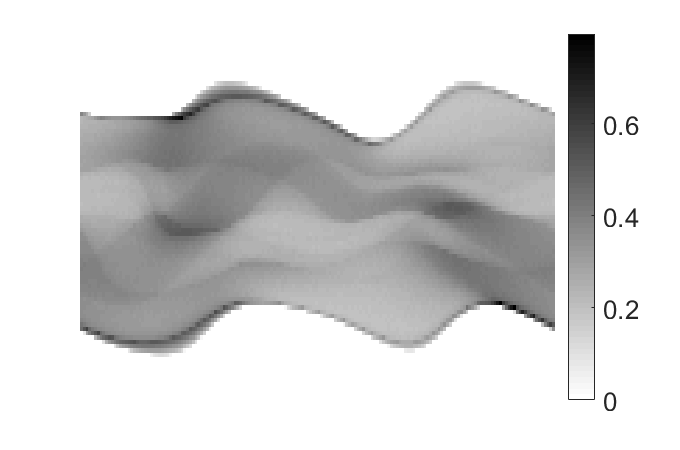}
    \includegraphics[trim= 20 10 20 10, clip, height=0.20\textheight]{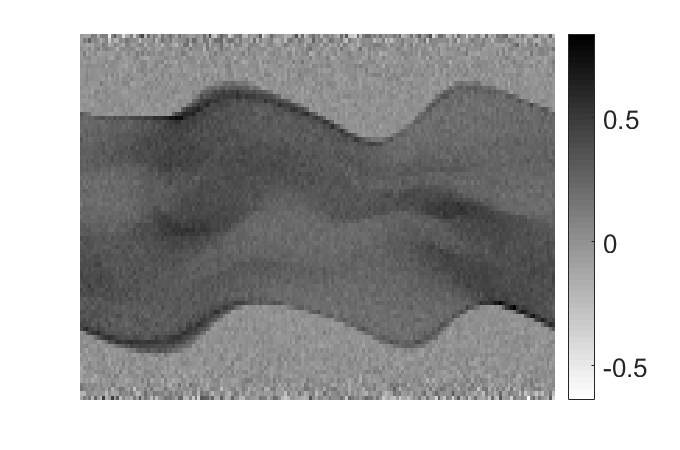} \\
    \includegraphics[height=0.20\textheight]{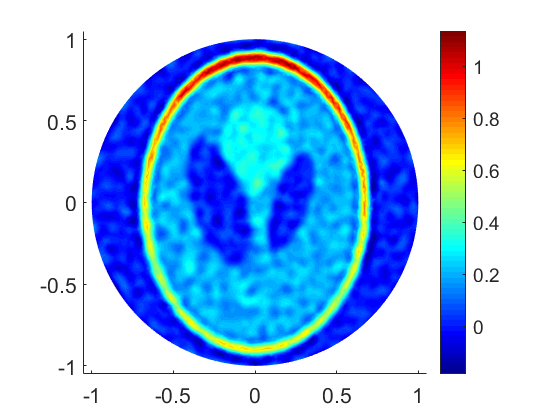}
    \includegraphics[height=0.20\textheight]{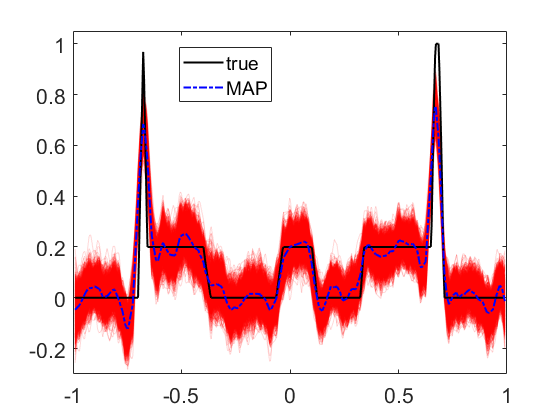}
    \caption{Example 2. Left to right. Top row: $If_1$; $If_1$ noisy (with $\beta$ on the horizontal axis and $\alpha$ on the vertical axis). Bottom row: posterior mean; cross-section on $\{x_2 = 0\}$ of 2000 posterior samples.}
    \label{fig:Ex2}
\end{figure}

\medskip
\noindent {\bf Example 3.} Reconstruction of $f_2$ in Euclidean geometry, with noise level $\varepsilon = 10^{-2}$ (all other parameters unchanged). As explained in the theory that follows in the next section, an appropriate prior for $f_2$ should be of the form $d_M^{-1/2} h_2$ where $h_2$ is drawn from a `standard' Gaussian prior modelling a regular function. For numerical purposes, it should be more stable to work with $h_2$, and try to reconstruct $h_2$ from the transform $I_d h_2 := I(d_M^{-1/2} h_2)$, as the transform $I_d$ naturally compensates for the blowup by integrating. In the implementation, the only change is to work with the discretised version of $I_d$ rather than $I$ (call the corresponding matrix $A_d$), everything else being kept equal. As may be observed on the middle row of Fig. \ref{fig:Ex3}, the reconstruction of $h_2$ is quite robust, especially at the boundary despite the blowup of $f_2$ there. For comparison, the bottom row of Fig. \ref{fig:Ex3} gives the outcome of just inverting for $f_2$ using $A$ with the usual prior on $f_2$ instead of $h_2$ (as in Example 1). As expected, the latter approach is manifestly more unstable near the boundary, and this instability is propagated to the reconstruction in the interior of $M$, as Fig.~\ref{fig:Ex3} illustrates. 

\begin{figure}[htpb]
    \centering
    \includegraphics[trim= 20 10 20 10, clip, height=0.21\textheight]{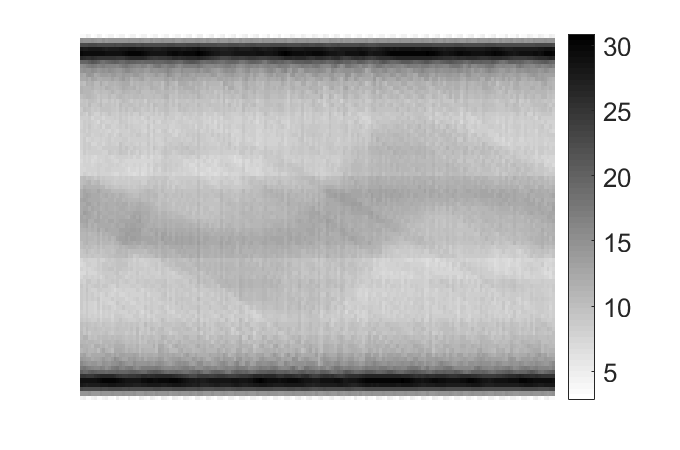}
    \includegraphics[trim= 20 10 20 10, clip, height=0.21\textheight]{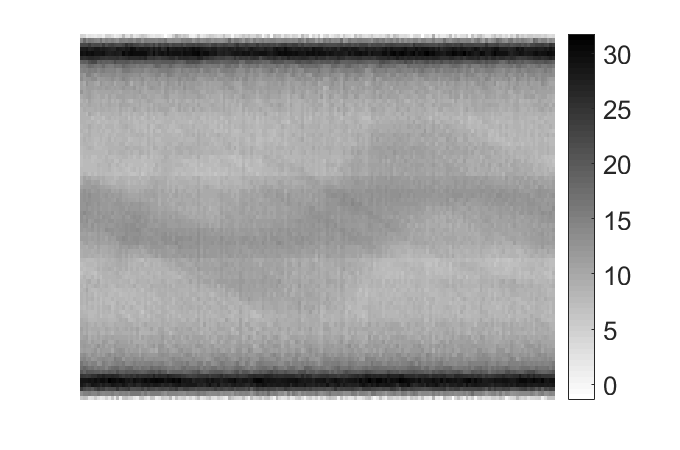} \\
    \includegraphics[height=0.24\textheight]{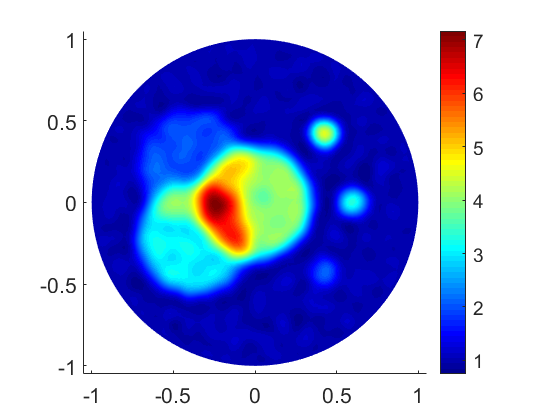}
    \includegraphics[height=0.24\textheight]{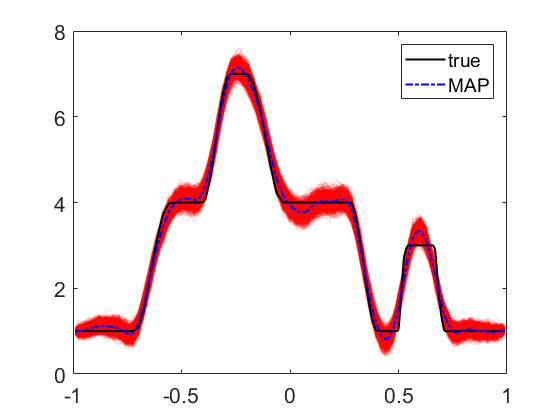} \\
    \includegraphics[height=0.24\textheight]{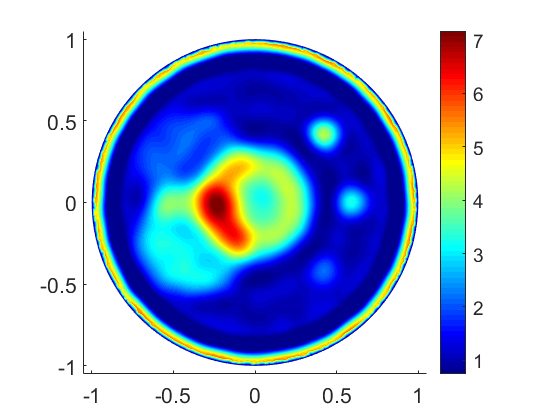}
    \includegraphics[height=0.24\textheight]{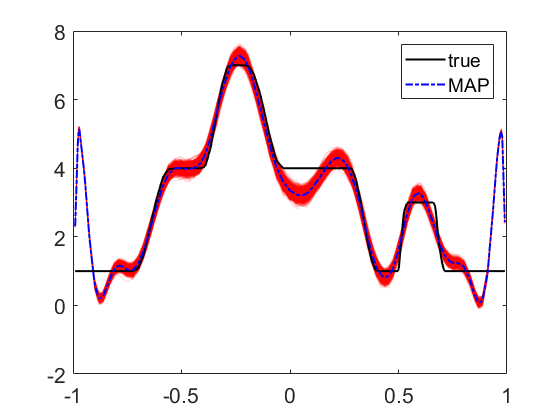}
    \caption{\small Example 3. Left to right. Top row: $If_2$; $If_2$ noisy. Middle row: posterior mean and 2000 cross-sectioned posterior samples for $h_2$. Bottom row: posterior mean and 2000 cross-sectioned posterior samples for $f_2$, divided by $d_M^{-1/2}$ for comparison with middle row.}
    \label{fig:Ex3}
\end{figure}

\subsection{Asymptotic normality of the posterior distribution and of the MAP estimator} \label{bvmsec}

The statistical methodology laid out in the previous section did not rely on any problem-specific regularisation (it just was based on a standard Gaussian process and the penalty norm of its RKHS), particularly no computation of the $SVD$ of the forward operator $I_a$ was required. One may wonder whether the Bayes solution of this inverse problem can be objectively trusted in the sense that it solves the inverse problem in a prior-independent way. We will now show that for the recovery of arbitrary $C^\infty$-aspects of $f$, posterior based inference is not only valid but actually optimal from an information theoretic point of view. The theory will be given in the `continuous' Gaussian white noise model (\ref{model0}).

We start with a Gaussian Borel probability measure $\tilde \Pi$ supported in the space $C(M)$ of bounded continuous functions on $M$. If $h \sim \tilde \Pi$ we let the prior $\Pi$ for $f$ be the law of the random function 
\begin{equation} \label{transf}
f(x)=D(h)(x) :=h(x) /\sqrt{d_M(x)},~x \in M,
\end{equation}
where $d_M$ is any function as in Theorem \ref{main0}. By standard arguments (Exercise 2.6.5 in \cite{GN16} or Lemma I.16 in \cite{GvdV17}), if $V_{\tilde \Pi}$ is the RKHS of the initial Gaussian measure $\tilde \Pi$ then the RKHS $V_\Pi$ of the induced prior has norm $\|\cdot\|_{V_\Pi} = \|\sqrt {d_M}(\cdot)\|_{V_{\tilde \Pi}}$. The linear mapping $D$ transforms a standard Gaussian prior into one that allows for singularities of functions at the boundary $\partial M$ of a form suggested by Theorem \ref{main0}.

We will now give some precise asymptotic ($\varepsilon \to 0$) results about the statistical behaviour of the posterior distribution arising from such a prior, under the frequentist assumption that a fixed $f_0$ generates the observations in (\ref{model0}). We will require a mild condition on the prior and on $f_0$ expressed through the concentration function of the initial probability measure $\tilde \Pi$
\begin{equation} \label{concsup}
\phi_{\tilde \Pi, f_0}(\delta) = \inf_{v \in V_{\tilde \Pi}, \|v-\sqrt {d_M} f_0\|_{\infty} \le \delta} \left[\frac{\|v\|_{V_{\tilde \Pi}}^2}{2} - \log \tilde \Pi (h:\|h\|_{\infty} \le \delta) \right],
\end{equation}
which characterises the asymptotics of the small ball probabilities $\tilde \Pi(h: \|h- {\sqrt {d_M}} f_0\|_\infty \le \delta)$ of $\tilde \Pi$ as $\delta \to 0$. The concentration function of Gaussian priors is well studied see \cite{vdVvZ08} or also Chapter 2.6 in \cite{GN16} and \cite{GvdV17}, and the condition that follows is mild -- it can be shown to be satisfied for all sufficiently rich Gaussian processes arising from positive definite kernels $K$, as soon as $\sqrt {d_M} f_0$ satisfies standard smoothness conditions, see Remark \ref{sobp}.

\begin{condition} \label{priorc}
Let $f_0: M \to \mathbb R$ such that $\sqrt {d_M}f_0 \in C(M)$. Let $\tilde \Pi$ be a Gaussian Borel probability measure on $C(M)$ whose RKHS $V_{\tilde \Pi}$ contains $C^\infty(M)$ and whose concentration function satisfies, for $c$ equal the constant from Theorem \ref{main0}b and for some sequence $\delta_\varepsilon  \to 0$ such that $\delta_\varepsilon/\varepsilon \to \infty$,
\begin{equation} \label{cfcond}
\phi_{\tilde \Pi, f_0}(\delta_\varepsilon/2c) \le (\delta_\varepsilon/\varepsilon)^2.
\end{equation}
\end{condition}

\begin{condition} \label{setting}
Let $P_{f_0}^Y$ be the law generating the equation $Y=I_af_0+\varepsilon \mathbb W$, where $I_a: L^2(M) \to L^2_\mu(\partial_+(SM))$ is the X-ray transform under the conditions of Theorem \ref{main0}, $\mathbb W$ is a white noise in $L^2_\mu(\partial_+(SM))$, and $\varepsilon>0$ is a noise level. Let $\Pi(\cdot|Y)$ be the posterior distribution arising from observing (\ref{model0}) under prior $\Pi=\mathcal L(f)$, where $\mathcal L(f)=\mathcal L(d_M^{-1/2}h), h \sim \tilde \Pi$, with $\tilde \Pi$ satisfying Condition \ref{priorc} for the given $f_0$. 
\end{condition}

Our main statistical result is the following Bernstein-von Mises theorem for posterior inference on $\langle f, \psi \rangle_{L^2(M)}$ for arbitrary test functions $\psi \in C^\infty(M)$. The idea of its proof is partly inspired by \cite{CN13, CN14, C14, CR15, C17}, where however priors have to be used that are diagonal in the inner product induced by the information operator. This is not the case in the inverse problem setting we consider here, but the invertibility result in Theorem \ref{main0} combined with an adaptation of ideas in \cite{CN13, CN14} allow to overcome this difficulty. We give the result for smooth $\psi$ but our techniques can be used to obtain results for less regular $\psi$ as well in principle, see Remark \ref{more}.

We employ the usual notion of weak convergence of laws $\mathcal L(X_n) \to^\mathcal L \mathcal L(X)$ of real random variables $X_n, X$ that converge in distribution, $X_n \to^d X$. In (\ref{bvm}) below we claim convergence of \textit{random} laws $\mu_n \to^\mathcal L \mu$ in probability, which means that for $\beta$ any metric for weak convergence of laws (11.3 in \cite{D02}),  the real random variables $\beta\left(\mu_n, \mu \right)$ converge to zero in probability.

\begin{Theorem} \label{main}
Assume Condition \ref{setting}. If $f \sim \Pi(\cdot|Y)$, then for every $\psi \in C^\infty(M)$ we have as $\varepsilon \to 0$ that
\begin{equation}\label{bvm}
\mathcal L\big(\varepsilon^{-1} \big(\langle f, \psi \rangle_{L^2(M)} - \hat \Psi \big) |Y\big) \to^\mathcal L \mathcal N(0, \|I_a(I_a^*I_a)^{-1}\psi\|^2_{L^2_\mu(\partial_+(SM))})
\end{equation}
 in $P_{f_0}^Y$-probability, where $$\hat \Psi = \langle f_0, \psi \rangle_{L^2(M)} - \varepsilon \langle I_a (I_a^*I_a)^{-1}\psi, \mathbb W\rangle_{L^2_\mu(\partial_+(SM))}.$$
 \end{Theorem}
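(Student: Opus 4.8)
The plan is to establish the Bernstein--von Mises limit by combining a semiparametric LAN/Gaussian-shift analysis of the model with a contraction-rate estimate for the posterior in a norm weak enough to control the one-dimensional functional $\langle f,\psi\rangle_{L^2(M)}$, following the general strategy of \cite{CN13, CN14} but using Theorem \ref{main0} to handle the fact that the prior is not diagonalised by the information operator. First I would record the basic LAN expansion for model (\ref{model0}): writing $Y = I_a f_0 + \varepsilon \mathbb W$, the log-likelihood ratio between $f$ and $f_0$ is, by Lemma \ref{LAN}, a Gaussian shift with LAN-norm $\|\cdot\|_{LAN}=\|I_a(\cdot)\|_{L^2_\mu(\partial_+SM)}$. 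For the target functional, set $\tilde\psi=(I_a^*I_a)^{-1}\psi$, which lies in $\{d_M^{-1/2}g:g\in C^\infty(M)\}$ by Theorem \ref{main0}a, and note the Riesz-type identity (\ref{ch25}), namely $\langle h,\psi\rangle_{L^2(M)} = \langle I_a h, I_a\tilde\psi\rangle_{L^2_\mu(\partial_+SM)}$ for all $h\in L^2(M)$. Thus $\langle\cdot,\psi\rangle_{L^2(M)}$ is a bounded linear functional for the LAN-norm, with Riesz representer $I_a\tilde\psi\in L^2_\mu(\partial_+SM)$ (finiteness is exactly Theorem \ref{main0}c), and its efficient variance is $\|I_a\tilde\psi\|^2_{L^2_\mu(\partial_+SM)}$, the asserted limiting variance.

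Next I would carry out the change-of-measure / conjugate-type argument that turns this into a statement about the posterior. The idea is to compare the posterior $\Pi(\cdot|Y)$ with the ``shifted'' posterior obtained by translating the prior in the least-favourable direction: concretely, perturb $f\mapsto f + (t/\varepsilon)\,\tilde\psi$ for $t\in\mathbb R$ and use Gaussian prior absolute continuity (Cameron--Martin) together with the LAN expansion to show that the Laplace transform $t\mapsto E^{\Pi(\cdot|Y)}[\exp(t\varepsilon^{-1}(\langle f,\psi\rangle - \hat\Psi))]$ converges, in $P^Y_{f_0}$-probability, to $\exp(t^2\|I_a\tilde\psi\|^2_{L^2_\mu}/2)$, which identifies the limit law as the claimed Gaussian. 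For this step one needs $\tilde\psi$ to be an admissible shift for the prior: since $\tilde\psi\in\{d_M^{-1/2}g:g\in C^\infty(M)\}$ and the RKHS $V_\Pi$ has norm $\|\cdot\|_{V_\Pi}=\|\sqrt{d_M}(\cdot)\|_{V_{\tilde\Pi}}$ with $V_{\tilde\Pi}\supseteq C^\infty(M)$ by Condition \ref{priorc}, we indeed have $\tilde\psi\in V_\Pi$, so the Cameron--Martin shift is legitimate and contributes a term that is precisely absorbed by the definition of the centring $\hat\Psi$. The remainder terms in the LAN expansion are quadratic in the shift and hence $O(t^2\varepsilon^{-2}\|\tilde\psi\|^2_{LAN}\cdot\varepsilon^2)=O(1)$, but more importantly the cross term that must vanish is $\varepsilon^{-1}\langle I_a(f-f_0), I_a\tilde\psi\rangle_{L^2_\mu} - \varepsilon^{-1}\langle f-f_0,\psi\rangle_{L^2(M)}$, which is identically $0$ by (\ref{ch25}); this exact orthogonality is what makes the bias term disappear without a rate condition, and is the structural payoff of Theorem \ref{main0}.

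The remaining ingredient, and what I expect to be the main obstacle, is controlling the ``nuisance'' fluctuations, i.e.\ showing that the Radon--Nikodym factor between the true posterior and the shifted one concentrates, uniformly enough, at the Gaussian exponent. This requires two quantitative inputs. (i) A posterior contraction rate: using the concentration-function Condition \ref{priorc} and the standard Gaussian-prior testing/evidence lower bound machinery (\cite{vdVvZ08}, Chapter 8.3 of \cite{GvdV17}), one shows $\Pi(f:\|I_a(f-f_0)\|_{L^2_\mu}\le M\delta_\varepsilon\,|\,Y)\to 1$ in $P^Y_{f_0}$-probability, where the rate is governed by $\delta_\varepsilon$; the key estimate (\ref{cfcond}) combined with Theorem \ref{main0}b (the bound $\|I_a(d_M^{-1/2}h)\|_{L^2_\mu}\le c\|h\|_\infty$) is precisely what converts a supremum-norm small-ball statement about $\tilde\Pi$ into an $L^2_\mu$-prediction-norm statement about $\Pi$, which is the natural (LAN) geometry here. (ii) On the contraction ball, one needs the difference between the exact log-likelihood ratio and its quadratic LAN approximation, evaluated along the shift, to be $o_P(1)$ uniformly; this is where one uses that $\delta_\varepsilon/\varepsilon\to\infty$ is \emph{not} needed to be small but rather that the shift $\tilde\psi$ is a \emph{fixed} smooth direction, so the relevant remainder is deterministic and exactly $0$ in the Gaussian white-noise model (the LAN expansion is exact), reducing (ii) to controlling a single stochastic linear term $\varepsilon\langle I_a\tilde\psi,\mathbb W\rangle_{L^2_\mu}$, which is Gaussian with the right variance and is subtracted off in $\hat\Psi$. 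Assembling (i) and (ii) with the Laplace-transform computation yields (\ref{bvm}); the delicate point throughout is bookkeeping the interplay between the $\infty$-norm in which the prior's small-ball behaviour is quantified and the $L^2_\mu$-prediction norm in which the statistical experiment is naturally LAN, a gap bridged solely by the mapping estimate in Theorem \ref{main0}b, and ensuring that the $D$-transform (\ref{transf}) does not destroy measurability or the Cameron--Martin structure needed for the shift.
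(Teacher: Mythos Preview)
Your overall architecture matches the paper's: compute the posterior Laplace transform of $\varepsilon^{-1}(\langle f,\psi\rangle - \hat\Psi)$, use the exact LAN expansion together with the identity (\ref{ch25}) to reduce to the ratio $\int e^{\ell(f_\tau)}\,d\Pi(f)\big/\int e^{\ell(f)}\,d\Pi(f)$ with $f_\tau=f+\tau\varepsilon\tilde\psi$, and then apply Cameron--Martin for the prior shift, using that $\tilde\psi=(I_a^*I_a)^{-1}\psi\in V_\Pi$ because $\sqrt{d_M}\tilde\psi\in C^\infty(M)\subset V_{\tilde\Pi}$. The paper does exactly this (its Proposition following Lemma \ref{postex}).

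There is, however, a genuine gap in your handling of the Cameron--Martin step. The Radon--Nikodym density of the shifted prior $\Pi_\tau$ with respect to $\Pi$ is $\exp\bigl(\tau\varepsilon\langle\tilde\psi,g\rangle_{V_\Pi}-\tfrac12(\tau\varepsilon)^2\|\tilde\psi\|_{V_\Pi}^2\bigr)$, where $\langle\tilde\psi,\cdot\rangle_{V_\Pi}$ is the RKHS linear functional (extended to the support of $\Pi$). This term is \emph{not} ``precisely absorbed by the centring $\hat\Psi$'': the centring absorbs the white-noise term $\langle I_a\tilde\psi,\mathbb W\rangle_{L^2_\mu}$ coming from the likelihood, not the prior term $\langle\tilde\psi,g\rangle_{V_\Pi}$. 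The quadratic piece $(\tau\varepsilon)^2\|\tilde\psi\|_{V_\Pi}^2\to 0$ is harmless, but the linear piece $\tau\varepsilon\langle\tilde\psi,g\rangle_{V_\Pi}$ must be shown to be $o_{P}(1)$ under the posterior, and your prediction-norm contraction $\|I_a(f-f_0)\|_{L^2_\mu}\le M\delta_\varepsilon$ does not control it: the RKHS functional $\langle\tilde\psi,\cdot\rangle_{V_\Pi}$ is not dominated by the LAN norm. (Also note the shift scale should be $t\varepsilon$, not $t/\varepsilon$; with your scaling the Cameron--Martin exponent blows up.)

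The paper's fix is to localise not to a prediction-norm ball but to the set $D_\varepsilon=\{f:|\langle\tilde\psi,f\rangle_{V_\Pi}|\le K(\delta_\varepsilon/\varepsilon)\|\tilde\psi\|_{V_\Pi}\}$, on which $\varepsilon|\langle\tilde\psi,g\rangle_{V_\Pi}|\lesssim\delta_\varepsilon\to 0$ directly. The justification that $\Pi(D_\varepsilon^c|Y)\to 0$ does \emph{not} come from posterior contraction in any norm; rather, Lemma \ref{postex} establishes that \emph{any} measurable set with prior mass $\le e^{-D_0(\delta_\varepsilon/\varepsilon)^2}$ has vanishing posterior mass (an evidence-lower-bound argument using the concentration function and Theorem \ref{main0}b), and the Gaussian tail bound for $\langle\tilde\psi,f\rangle_{V_\Pi}\sim\mathcal N(0,\|\tilde\psi\|_{V_\Pi}^2)$ under $\Pi$ supplies the required prior mass estimate. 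This is the missing idea in your proposal.
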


 \begin{Remark}  [Examples of Gaussian priors and of $f_0$'s] \normalfont  \label{sobp}
Regarding $M$ as a subset of $\mathbb R^d$, most Gaussian processes that model regular functions in $C(\mathbb R^d)$, when restricted to $C(M)$, will satisfy Condition \ref{priorc}, if we assume that $\sqrt {d_M} f_0 $ is sufficiently regular. For example let $K: \mathbb R^d \to \mathbb R$ be a positive definite kernel function whose Fourier transform $FK$ satisfies, for all $\|u\|$ large enough,
\begin{equation} \label{decay}
c_0\|u\|^{-2s} \le FK(u) \le c_1 \|u\|^{-2s},~~c_0 < c_1,~~s>d/2,
\end{equation}
a concrete example being provided by the Mat\'ern kernel (see p.313 in \cite{GvdV17} and also Section 9.6 of \cite{AS64}). For such $K$ we can define a unique centred stationary Gaussian process $(G(x): x \in \mathbb R^d)$ with covariance $EG(x)G(y)=K(x-y), x,y \in \mathbb R^d$.  The Gaussian process $(G(x): x \in M)$ obtained by restriction to $M \subset \mathbb R^d$ defines a tight Gaussian measure $\gamma_K$ on $C(M)$, and its RKHS coincides with the standard Sobolev space $H^s(M)$ obtained from restricting elements of $H^s(\mathbb R^d)$ to $M$. Moreover if $f_0 = d_M^{-1/2}\phi_0$ for some $\phi_0 \in H^s(M), s>d/2,$ then $\tilde \Pi = \gamma_K$ satisfies Condition \ref{priorc} with $\delta_\varepsilon \approx \varepsilon^{2s/(2s+d)}$. Likewise, if $\phi_0$ is $\alpha$-H\"older continuous on $M$ for some $\alpha>0$ (including the case of arbitrary $f_0 \in C^\infty(M)$), it can be approximated from elements in $H^s(M)$ in $\|\cdot\|_\infty$-norm and a sequence $\delta_\varepsilon \to 0$ for which Condition \ref{priorc} holds can still be found. These facts can be proved just as in \cite{GvdV17}, p.330f. 
\end{Remark}

From the previous theorem we can deduce the asymptotic distribution of the posterior mean $E^\Pi[f|Y]$, which, since the posterior distribution is also a Gaussian measure, equals the posterior mode (MAP estimate). From Corollary 3.10 in \cite{DLSV13} (see also Section 11.7 in \cite{GvdV17}), MAP estimates can further be seen to equal the Tikhonov-regularisers with RKHS norm as penalty function.  Note that in our infinite-dimensional setting the Tikhonov regulariser is defined as the maximiser in $f$ of the Onsager-Machlup functional 
\begin{equation}
Q(f) = \frac{1}{\varepsilon^2} \langle I_af, Y \rangle_{L_\mu^2(\partial_+ SM)} - \frac{1}{2\varepsilon^2} \|I_af\|_{L_\mu^2(\partial_+ SM)}^2 - \frac{1}{2} \|f\|_{V_{\Pi}}^2.
\end{equation}
In the discrete setting from Section \ref{method} this is equivalent to minimising $Q(f)=\frac{1}{\varepsilon^2} \|Y-I_a f\|^2 + \|f\|_{V_\Pi}^2$ as usual, but in our setting $Y \notin L_\mu^2(\partial_+ SM)$, so the preceding formulation is the appropriate one.

\begin{Theorem} \label{tikh}
Let $\bar f = \bar f(Y)= E^\Pi[f|Y] \in C(M)$ be the mean of the posterior distribution in Theorem \ref{main}. Then for every $\psi \in C^\infty(M)$ we have $\langle \bar f(Y), \psi \rangle_{L^2(M)} - \hat \Psi = o_{P_{f_0}^Y}(\varepsilon)$ as $\varepsilon \to 0$ and thus also, under $P_{f_0}^Y$,
\begin{equation} \label{meanbvm}
\frac{1}{\varepsilon}\langle \bar f - f_0, \psi \rangle_{L^2(M)} \to^d Z \sim \mathcal N(0, \|I_a(I_a^*I_a)^{-1}\psi\|^2_{L^2_\mu(\partial_+(SM))}).
\end{equation}
In particular in (\ref{bvm}) in Theorem \ref{main} we may replace the centring $\hat \Psi$ by $\langle \bar f, \psi \rangle_{L^2(M)}$.
\end{Theorem}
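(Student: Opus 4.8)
\textbf{Proof proposal for Theorem \ref{tikh}.}

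The plan is to deduce the statement for the posterior mean from the Bernstein--von Mises theorem (Theorem \ref{main}) by exploiting the Gaussianity of the posterior together with uniform integrability. First I would observe that, conditionally on $Y$, the real random variable $\langle f,\psi\rangle_{L^2(M)}$ with $f\sim\Pi(\cdot|Y)$ is Gaussian, since the posterior is a Gaussian measure on $C(M)$ and $\psi\mapsto\langle\cdot,\psi\rangle_{L^2(M)}$ is a continuous linear functional; hence $\varepsilon^{-1}(\langle f,\psi\rangle_{L^2(M)}-\hat\Psi)|Y$ is Gaussian with mean $m_\varepsilon(Y):=\varepsilon^{-1}(\langle\bar f(Y),\psi\rangle_{L^2(M)}-\hat\Psi)$ and some variance $s_\varepsilon^2(Y)\ge 0$. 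A sequence of Gaussian laws $\mathcal N(m_\varepsilon,s_\varepsilon^2)$ converges weakly to $\mathcal N(0,\sigma^2)$ with $\sigma^2=\|I_a(I_a^*I_a)^{-1}\psi\|^2_{L^2_\mu(\partial_+(SM))}$ if and only if $m_\varepsilon\to 0$ and $s_\varepsilon^2\to\sigma^2$; applied along the convergence in $P^Y_{f_0}$-probability from \eqref{bvm}, this forces $m_\varepsilon(Y)\to 0$ in $P^Y_{f_0}$-probability, i.e. $\langle\bar f(Y),\psi\rangle_{L^2(M)}-\hat\Psi=o_{P^Y_{f_0}}(\varepsilon)$, which is the first claim.

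To make the ``only if'' direction rigorous I would use a metric $\beta$ for weak convergence (as in the statement preceding Theorem \ref{main}) and the elementary fact that for Gaussian laws the map $(m,s^2)\mapsto\mathcal N(m,s^2)$ is a homeomorphism onto its image, so $\beta(\mathcal N(m_\varepsilon,s_\varepsilon^2),\mathcal N(0,\sigma^2))\to^{P}0$ is equivalent to $(m_\varepsilon,s_\varepsilon^2)\to^{P}(0,\sigma^2)$; alternatively one can test against a bounded Lipschitz function and extract the mean. Once $\langle\bar f(Y),\psi\rangle_{L^2(M)}-\hat\Psi=o_{P^Y_{f_0}}(\varepsilon)$ is established, the second claim \eqref{meanbvm} is immediate: writing
\begin{equation*}
\frac{1}{\varepsilon}\langle\bar f-f_0,\psi\rangle_{L^2(M)}=\frac{1}{\varepsilon}\big(\langle\bar f,\psi\rangle_{L^2(M)}-\hat\Psi\big)-\langle I_a(I_a^*I_a)^{-1}\psi,\mathbb W\rangle_{L^2_\mu(\partial_+(SM))},
\end{equation*}
the first term is $o_{P^Y_{f_0}}(1)$ and the second is exactly $\mathcal N(0,\sigma^2)$-distributed under $P^Y_{f_0}$ because $\mathbb W$ is white noise and $I_a(I_a^*I_a)^{-1}\psi\in L^2_\mu(\partial_+(SM))$ by Theorem \ref{main0}c; Slutsky's lemma then gives the stated convergence in distribution. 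The final sentence of the theorem follows since $\langle\bar f,\psi\rangle_{L^2(M)}$ and $\hat\Psi$ differ by $o_{P^Y_{f_0}}(\varepsilon)$, so they are interchangeable as centrings in \eqref{bvm}.

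The identification of $\bar f(Y)$ with the MAP estimate / Tikhonov regulariser is not needed for the proof of the displayed limits, but is recorded via Corollary 3.10 in \cite{DLSV13}; the only genuine subtlety is that the posterior mean must be known a priori to exist as an element of $C(M)$ so that $\langle\bar f(Y),\psi\rangle_{L^2(M)}$ is well-defined --- this is part of the hypothesis ($\bar f(Y)\in C(M)$) and is consistent with $\Pi(\cdot|Y)$ being a Gaussian Borel measure on $C(M)$, whose Bochner mean then lies in $C(M)$. I expect the main (mild) obstacle to be the passage from convergence of random laws to convergence of their means, i.e. justifying that no mass escapes: this is where Gaussianity of the posterior is essential, since for a Gaussian family tightness of the laws automatically controls both mean and variance, and one avoids having to prove a separate uniform-integrability estimate for $\varepsilon^{-1}\langle f,\psi\rangle_{L^2(M)}$.
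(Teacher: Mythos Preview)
Your proposal is correct and in fact cleaner than the paper's own argument. Both proofs hinge on the same observation---that the posterior law of $\langle f,\psi\rangle_{L^2(M)}$ given $Y$ is Gaussian, so that the Bernstein--von Mises weak limit forces convergence of the mean---but the paper executes this in a more roundabout way: it argues by contradiction, extracts subsequences to upgrade convergence in probability to almost sure convergence, invokes Skorohod's representation theorem to realise the weak convergence as almost sure convergence of random variables, and finally appeals to a Paley--Zygmund argument (almost sure convergence of Gaussians implies moment convergence) to deduce that the posterior mean tends to zero. Your route is more direct: you use that the parametrisation $(m,s^2)\mapsto\mathcal N(m,s^2)$ is a homeomorphism onto its image in the weak topology, so $\beta(\mathcal N(m_\varepsilon,s_\varepsilon^2),\mathcal N(0,\sigma^2))\to 0$ in $P_{f_0}^Y$-probability immediately yields $m_\varepsilon\to 0$ in probability. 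This avoids both Skorohod and Paley--Zygmund. The paper's approach does buy slightly more---it establishes convergence of \emph{all} moments, which is used in the remark following the theorem on exact asymptotic minimaxity---but for the theorem as stated your argument suffices. One minor point: the paper justifies Gaussianity of $\langle f,\psi\rangle_{L^2(M)}|Y$ not via the posterior being Gaussian on $C(M)$ (note $f=d_M^{-1/2}h$ need not lie in $C(M)$) but by writing $\langle f,\psi\rangle_{L^2(M)}=\langle I_af, I_a(I_a^*I_a)^{-1}\psi\rangle_{L^2_\mu}$ via Theorem~\ref{main0} and using that $I_af|Y$ is Gaussian in $L^2_\mu(\partial_+SM)$ by conjugacy; you may want to phrase it this way to avoid the ambient-space issue.
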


\begin{Remark}[Exact asymptotic minimaxity] \normalfont
The proof of the last theorem implies that convergence of all moments in (\ref{meanbvm}) occurs, and hence $\bar f$ attains the lower bound constant from (\ref{crmin}) in the small noise limit. Thus $\langle \bar f, \psi \rangle_{L^2(M)}$ is an asymptotically exact minimax estimator of $\langle f_0, \psi \rangle_{L^2(M)}$.
\end{Remark}

\begin{Remark}  [Confident credible sets] \normalfont
Theorems \ref{main} and \ref{tikh} justify the following construction of a confidence set for the Tikhonov regulariser: Consider a credible interval $$C_\varepsilon = \{x \in \mathbb R: |\langle \bar f, \psi \rangle - x| \le R_\varepsilon\},~~R_\varepsilon \text{ s.t. } \Pi(C_\varepsilon|Y)=1-\alpha,$$ for some given significance level $0<\alpha<1$. The frequentist coverage probability of $C_\varepsilon$ will satisfy (arguing as in the proof of Theorem 7.3.23 in \cite{GN16}) $$P^Y_{f_0}(\langle f_0, \psi \rangle \in C_\varepsilon) \to 1-\alpha,~~\text{and }\varepsilon^{-1} R_\varepsilon \to^{P_{f_0}^Y} \Phi^{-1}(1-\alpha)$$ as $\varepsilon \to 0$. Here $\Phi^{-1}$ is the continuous inverse of $\Phi = \Pr (|Z| \le \cdot)$ with $Z$ as in (\ref{meanbvm}). To implement this confidence set we use the posterior sampling method from Section \ref{method} to numerically approximate the quantile constants $R_\varepsilon$ -- computation of $Var(Z)$, which could be intricate, is not required.
\end{Remark}
\begin{Remark} [Extensions] \normalfont \label{more}
The above theorem shows that semi-parametrically efficient recovery of $C^\infty$ aspects of $f$ is possible. Following the program laid out in the papers \cite{CN13, CN14, C14, C17, N17} one could in principle proceed to use the estimates in the proof of Theorem \ref{main} to derive a result for posterior reconstruction of the \textit{entire} parameter $f$ in suitable norms via bounding $\varepsilon^{-1}\langle f-f_0, \psi \rangle|Y$ \textit{uniformly} in collections of functions $\psi$ of bounded Sobolev norms. The approximation theoretic arguments required to do that in the present setting involve delicate boundary issues, with standard Sobolev spaces as approximation scales having to be replaced by the H\"ormander spaces introduced in Section \ref{pxray} below. The execution of these arguments is possible but quite technical and beyond the scope of this paper.
\end{Remark}

\section{Proofs for Section \ref{bvmsec}}

\subsection{Proof of Theorem \ref{main}}

Let $H_i, i=1,2$ be separable Hilbert spaces and consider the equation
\begin{equation*} 
Y =  G(f) + \eps \mathbb W,~~~\eps>0,
\end{equation*}
where $G: H_1 \to  H_2$ is a Borel measurable mapping and $\mathbb W$ is a centred Gaussian white noise process $(\mathbb W(h): h \in H_2)$ with covariance $E \mathbb W(h) \mathbb W(g) = \langle h,g \rangle_{H_2}$. Observing $Y$ then means that we observe a realisation of the Gaussian process $(Y(h) = \langle Y, h \rangle_{H_2} : h \in H_2)$. We sometimes write $\langle \mathbb W, h \rangle_{H_2}$ for the random variable $\mathbb W(h)$. Arguing as in Section 7.3 in \cite{N17} the posterior distribution of $f|Y$ exists and equals 
\begin{equation} \label{posto}
\Pi(B|Y) = \frac{\int_B p_f(Y) d\Pi(f)}{\int_\mathcal F p_f(Y) d\Pi(f)},~~~ B \in \mathcal B_{H_1}~\text{a Borel set in } H_1,
\end{equation} 
where $p_f(Y)$ is a likelihood function with respect to a suitable dominating measure. The following result is a standard application of the Cameron-Martin theorem (see, eq.~(111) in \cite{N17}).
\begin{Lemma} \label{LAN}
Let $\ell(f) = \log p_f(Y)$ and assume $Y= G(f_0) + \eps \mathbb W$ for some fixed $f_0 \in H_1$. Then if $G$ is also linear, we have for any $f,g \in H_1$, $$\ell(f)-\ell(g)=-\frac{1}{2\varepsilon^2} \left(\| G(f-f_0)\|_{H_2}^2 - \|G(g-f_0)\|_{H_2}^2\right) + \frac{1}{\varepsilon} \langle G(f-g), \mathbb W \rangle_{H_2} $$
\end{Lemma}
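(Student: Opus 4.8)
The plan is to recognise the likelihood $p_f(Y)$ as a Cameron--Martin density and then expand. First I would pass to the sequence-space form of the white-noise model: fixing an orthonormal basis $(e_k)_{k\ge 1}$ of $H_2$, observing $Y$ is equivalent to observing the real sequence $Y_k := \langle Y, e_k\rangle_{H_2} = \langle G(f), e_k\rangle_{H_2} + \varepsilon g_k$ with i.i.d.\ $g_k \sim \mathcal N(0,1)$, so $P_f$ is the infinite product $\bigotimes_k \mathcal N(\langle G(f), e_k\rangle_{H_2},\, \varepsilon^2)$. Because $G$ maps into $H_2$, the shift sequence satisfies $\sum_k \langle G(f), e_k\rangle_{H_2}^2 = \|G(f)\|_{H_2}^2 < \infty$, hence $P_f$ is mutually absolutely continuous with $P_0$, the law of $\varepsilon\mathbb W$, by Kakutani's theorem — equivalently the Cameron--Martin theorem, $H_2$ being the Cameron--Martin space of $\varepsilon\mathbb W$ — with
\begin{equation*}
\frac{dP_f}{dP_0}(Y) = \exp\!\left( \frac{1}{\varepsilon^2}\langle Y, G(f)\rangle_{H_2} - \frac{1}{2\varepsilon^2}\|G(f)\|_{H_2}^2 \right),
\end{equation*}
where $\langle Y, G(f)\rangle_{H_2} := \sum_k Y_k \langle G(f), e_k\rangle_{H_2}$ denotes the (a.s.\ and $L^2$-convergent) Paley--Wiener integral. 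Since the difference $\ell(f)-\ell(g)$ of log-densities does not depend on the choice of dominating measure, I would simply take $p_f := dP_f/dP_0$ as a version of the likelihood appearing in \eqref{posto}.

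With this in hand the computation is routine. Writing $\ell(f) = \log(dP_f/dP_0)(Y)$ gives
\begin{equation*}
\ell(f) - \ell(g) = \frac{1}{\varepsilon^2}\langle Y, G(f)-G(g)\rangle_{H_2} - \frac{1}{2\varepsilon^2}\big(\|G(f)\|_{H_2}^2 - \|G(g)\|_{H_2}^2\big).
\end{equation*}
Substituting $Y = G(f_0) + \varepsilon\mathbb W$ and using linearity of $G$ and bilinearity of $\langle\cdot,\cdot\rangle_{H_2}$, the stochastic part collapses to $\varepsilon^{-1}\langle G(f-g),\mathbb W\rangle_{H_2}$, matching the claimed formula; the deterministic part is $\varepsilon^{-2}\langle G(f_0), G(f-g)\rangle_{H_2} - \tfrac{1}{2}\varepsilon^{-2}\big(\|G(f)\|_{H_2}^2 - \|G(g)\|_{H_2}^2\big)$. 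Expanding $\|G(f-f_0)\|_{H_2}^2 = \|G(f)\|_{H_2}^2 - 2\langle G(f),G(f_0)\rangle_{H_2} + \|G(f_0)\|_{H_2}^2$ (and likewise with $g$ in place of $f$) and subtracting, this deterministic part is seen to equal $-\tfrac{1}{2}\varepsilon^{-2}\big(\|G(f-f_0)\|_{H_2}^2 - \|G(g-f_0)\|_{H_2}^2\big)$, which yields the asserted identity.

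The only genuinely delicate point is the first step: since $\mathbb W$ is not an $H_2$-valued random element (in the infinite-dimensional case of interest), the white-noise experiment, the cylindrical process $Y(\cdot)$ and the Paley--Wiener functional $\langle Y,\cdot\rangle_{H_2}$ have to be set up with care, and one must check that the product density written above is a legitimate version of $p_f$ entering \eqref{posto}. This is entirely standard (cf.\ eq.~(111) and Section 7.3 in \cite{N17}); once it is granted, everything else is elementary linear algebra in $H_2$.
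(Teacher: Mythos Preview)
Your argument is correct and is exactly the approach the paper has in mind: the paper does not spell out a proof but simply states that the lemma is a standard application of the Cameron--Martin theorem, referring to eq.~(111) in \cite{N17}, which is precisely the density formula you write down and expand.
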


We now prove Theorem \ref{main}, and will use the above lemma with $H_1 = L^2(M)$, $H_2 = L_\mu^2(\partial_+ SM)$, $G=I_a$. In what follows the total variation norm between finite  measures $\mu, \nu$ is defined to equal the supremum $\|\mu-\nu\|_{TV}:=\sup_{B}|\mu(B)-\nu(B)|$ over all Borel sets $B$.

\begin{Lemma} \label{postex}
Let $\tilde \Pi$ be a Gaussian Borel probability measure on $C(M)$, and for fixed $f_0$ assume its concentration function $\phi_{\tilde \Pi, f_0}$ satisfies (\ref{cfcond}) for some $\delta=\delta_\varepsilon \to 0$. Let $\Pi$ be the prior for $f$ corresponding to the law of $f=D(h), h \sim \tilde \Pi$ with $D$ as in (\ref{transf}), and let $\Pi(\cdot|Y)$ be the resulting posterior distribution arising from observing $Y=I_af+\varepsilon \mathbb W$, where $I_a$ is the X-ray transform from Theorem \ref{main0}. Then for any Borel set $D_\varepsilon \subset L^2(M)$ for which
\begin{equation}
\Pi(D_\varepsilon^c) \le e^{-D_0(\delta_\varepsilon/\varepsilon)^2}~~\text{for some } D_0>3
\end{equation}
and all $\varepsilon>0$ small enough, we have 
\begin{equation} \label{conlim}
\Pi(D_\varepsilon^c|Y) \to 0 \text{ and }\|\Pi^{D_\varepsilon}(\cdot|Y)-\Pi(\cdot|Y)\|_{TV}  \to 0
\end{equation}
as $\varepsilon \to 0$ in $P_{f_0}^Y$-probability. Here $\Pi^{D_\varepsilon}(\cdot|Y)$ is the posterior distribution arising from the prior $\Pi(\cdot \cap D_\varepsilon)/\Pi(D_\varepsilon)$ restricted to $D_\varepsilon$ and renormalised.
\end{Lemma}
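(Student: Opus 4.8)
The plan is to bound the posterior mass $\Pi(D_\varepsilon^c|Y)$ directly from the ratio formula \eqref{posto}, controlling the normalising integral $N := \int p_f(Y)\,d\Pi(f)$ from below and the restricted integral $\int_{D_\varepsilon^c}p_f(Y)\,d\Pi(f)$ from above, in both cases up to the common factor $p_{f_0}(Y)$ and on an event of $P_{f_0}^Y$-probability tending to one; the total variation statement will then follow by an elementary manipulation. Throughout write $h_0 := \sqrt{d_M}f_0 \in C(M)$, so $f_0 = d_M^{-1/2}h_0$, and recall from Lemma \ref{LAN} applied with $G = I_a$ that $\ell(f)-\ell(f_0) = -\tfrac{1}{2\varepsilon^2}\|I_a(f-f_0)\|^2_{L^2_\mu(\partial_+SM)} + \tfrac1\varepsilon\langle I_a(f-f_0),\mathbb W\rangle$.

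To bound $N$ from below I would first produce a subset of the prior support on which $I_af$ is $L^2_\mu$-close to $I_af_0$. By the Cameron--Martin theorem combined with Borell's inequality (the standard small-ball estimate, see \cite{vdVvZ08} and Chapter 2.6 of \cite{GN16}) one has $\tilde\Pi(h:\|h-h_0\|_\infty \le \delta) \ge \exp(-\phi_{\tilde\Pi,f_0}(\delta/2))$ for every $\delta > 0$; taking $\delta = \delta_\varepsilon/c$ with $c$ the constant from Theorem \ref{main0}b and using \eqref{cfcond} produces a Borel set $B_\varepsilon \subset L^2(M)$ with $\Pi(B_\varepsilon) \ge \exp(-(\delta_\varepsilon/\varepsilon)^2)$ on which, by Theorem \ref{main0}b, $\|I_a(f-f_0)\|_{L^2_\mu(\partial_+SM)} = \|I_a(d_M^{-1/2}(h-h_0))\|_{L^2_\mu(\partial_+SM)} \le c\|h-h_0\|_\infty \le \delta_\varepsilon$. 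Restricting the integral defining $N$ to $B_\varepsilon$ and applying Jensen's inequality with the normalised measure $\Pi(\cdot\cap B_\varepsilon)/\Pi(B_\varepsilon)$ together with Fubini, the quadratic term contributes at least $-\delta_\varepsilon^2/2\varepsilon^2$ on $B_\varepsilon$, while the averaged linear term equals $\tfrac1\varepsilon\langle I_a\bar g_\varepsilon,\mathbb W\rangle$ with $\bar g_\varepsilon := \Pi(B_\varepsilon)^{-1}\int_{B_\varepsilon}(f-f_0)\,d\Pi(f)$, and $\|I_a\bar g_\varepsilon\|_{L^2_\mu(\partial_+SM)} \le \delta_\varepsilon$ by convexity; hence this term is a centred Gaussian of standard deviation at most $\delta_\varepsilon/\varepsilon$, which is $o((\delta_\varepsilon/\varepsilon)^2)$ because $\delta_\varepsilon/\varepsilon \to \infty$. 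Putting these estimates together I expect that outside an event of probability $o(1)$ one gets $N \ge p_{f_0}(Y)\exp(-C(\delta_\varepsilon/\varepsilon)^2)$ with a fixed constant that crude bookkeeping makes $C = 3$; this is precisely where the hypothesis $D_0 > 3$ enters.

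For the numerator I would use that each $p_f$ is a genuine probability density, so $E_{f_0}[e^{\ell(f)-\ell(f_0)}] = 1$ for every fixed $f$ in the support of $\Pi$ (on which $I_af \in L^2_\mu(\partial_+SM)$ by Theorem \ref{main0}b), and Fubini, to obtain $E_{f_0}\big[\int_{D_\varepsilon^c}e^{\ell(f)-\ell(f_0)}\,d\Pi(f)\big] = \Pi(D_\varepsilon^c) \le e^{-D_0(\delta_\varepsilon/\varepsilon)^2}$. Markov's inequality then gives, for any $c'$ with $C < c' < D_0$ (such a $c'$ exists since $D_0 > 3 \ge C$), that $\int_{D_\varepsilon^c}e^{\ell(f)-\ell(f_0)}\,d\Pi(f) \le e^{-c'(\delta_\varepsilon/\varepsilon)^2}$ off an event of probability $e^{-(D_0-c')(\delta_\varepsilon/\varepsilon)^2} \to 0$. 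Dividing by the lower bound on $N$ yields $\Pi(D_\varepsilon^c|Y) \le e^{-(c'-C)(\delta_\varepsilon/\varepsilon)^2} \to 0$ in $P_{f_0}^Y$-probability, which is the first assertion in \eqref{conlim}. For the second assertion, the identity $\Pi^{D_\varepsilon}(B|Y) = \Pi(B\cap D_\varepsilon|Y)/(1-\Pi(D_\varepsilon^c|Y))$, valid for every Borel set $B$, gives $\|\Pi^{D_\varepsilon}(\cdot|Y)-\Pi(\cdot|Y)\|_{TV} \le \Pi(D_\varepsilon^c|Y)/(1-\Pi(D_\varepsilon^c|Y))$, which on the event $\{\Pi(D_\varepsilon^c|Y)\le 1/2\}$ --- of probability tending to one by the first part --- is at most $2\Pi(D_\varepsilon^c|Y) \to 0$.

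The only genuinely problem-specific step, and the one I expect to be the main obstacle, is the lower bound on $N$: one must convert the $\|\cdot\|_\infty$ small-ball information carried by the concentration function of $\tilde\Pi$ into $L^2_\mu$-proximity of $I_af$ to $I_af_0$ --- which is exactly the role of the mapping estimate in Theorem \ref{main0}b --- while simultaneously controlling the stochastic linear term arising from the white noise. The numerator bound and the passage to total variation are then routine.
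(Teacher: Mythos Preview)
Your proposal is correct and follows essentially the same route as the paper's proof: both lower-bound the normaliser via Jensen on the small ball $\{f:\|I_a(f-f_0)\|_{L^2_\mu}\le\delta_\varepsilon\}$, control the resulting Gaussian linear term by a tail bound, upper-bound the numerator using $E_{f_0}[e^{\ell(f)-\ell(f_0)}]=1$ and Fubini, and convert the $\|\cdot\|_\infty$ concentration of $\tilde\Pi$ into $L^2_\mu$-smallness of $I_a(f-f_0)$ through Theorem~\ref{main0}b. The only organisational differences are that the paper routes the last step through the pushed-forward concentration function $\phi_{\Pi,f_0}$ (showing $\phi_{\Pi,f_0}(\delta)\le\phi_{\tilde\Pi,f_0}(\delta/c)$) rather than passing directly from $\tilde\Pi$-balls to $I_a$-balls as you do, and that the paper bounds $E_{f_0}^Y[\Pi(D_\varepsilon^c|Y)\mathbf{1}_{A_\varepsilon^c}]$ in one stroke instead of applying Markov to the numerator separately; both choices lead to the same constant $3$ and the same requirement $D_0>3$.
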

\begin{proof}
It suffices to prove the first limit in (\ref{conlim}), the second then follows from the basic inequality $\|\Pi^{D_\varepsilon}(\cdot|Y)-\Pi(\cdot|Y)\|_{TV} \le 2 \Pi(D_\varepsilon^c|Y)$.

We have from (\ref{posto}) that $$\Pi(B|Y)= \frac{\int_B e^{\ell(f)-\ell(f_0)} d\Pi(f)}{\int_\mathcal F e^{\ell(f)-\ell(f_0)} d\Pi(f)},~~ B \in \mathcal B_{L^2(M)},$$
and under $P_{f_0}^Y$
can use Lemma \ref{LAN} to see $$\ell(f)-\ell(f_0) = -\frac{1}{2\varepsilon^2}\|I_a(f-f_0)\|_{H_2}^2 + \frac{1}{\varepsilon}\langle I_a(f-f_0), \mathbb W \rangle_{H_2}.$$ Let $\nu$ be any probability measure on the set  $B=\{f: \|I_a(f-f_0)\|_{H_2}^2 \le \delta^2\}$. For any $C>0$ we have from Jensen's inequality
\begin{align*}
& P_{f_0}^Y \left(\int_B e^{\ell(f)-\ell(f_0)} d\nu(f) \le e^{-(1+C) (\delta_\varepsilon/\varepsilon)^2} \right) \\
&\le \Pr \left(\int_B \big(-\frac{1}{2\varepsilon^2}\|I_a(f-f_0)\|_{H_2}^2 + \frac{1}{\varepsilon}\langle I_a(f-f_0), \mathbb W \rangle_{H_2}\big )  d\nu(f) \le -(1+C) \frac{\delta_\varepsilon^2}{\varepsilon^2} \right) \\
& \le  \Pr \left(\left|\int_B  \frac{1}{\varepsilon}\langle I_a(f-f_0), \mathbb W \rangle_{H_2} d\nu(f) \right| \ge C \frac{\delta_\varepsilon^2}{\varepsilon^2} \right) \le e^{-C^2 (\delta_\varepsilon/\varepsilon)^2/2}
\end{align*}
since the standard Gaussian tail bound $P(|Z-EZ|>u) \le e^{-u^2/2Var(Z)}$ applies to the random variable $Z=\int_B \varepsilon^{-1} \langle I_a(f-f_0), \mathbb W \rangle_{H_2} d\nu(f)$ which has a centred normal distribution with variance bounded, again using Jensen's inequality, by
\begin{align*}
E\left[\int_B  \varepsilon^{-1}\langle I_a(f-f_0), \mathbb W \rangle_{H_2} d\nu(f) \right]^2 & \le \varepsilon^{-2}\int_B E \langle I_a(f-f_0), \mathbb W\rangle_{H_2}^2 d\nu(f) \le  \frac{\delta^2}{\varepsilon^2},
\end{align*}
recalling that $\mathbb W$ is a centred Gaussian white noise in $H_2$. Now we choose $\nu = \Pi(\cdot \cap B)/\Pi(B)$ and let $$A_\varepsilon=\Big\{\int_B e^{\ell(f)-\ell(f_0)} d\nu(f) \le e^{-2(\delta_\varepsilon/\varepsilon)^2}\Big\},$$ for which $P_{f_0}^Y(A_\varepsilon) \le e^{- (\delta_\varepsilon/\varepsilon)^2/2} \to 0$ by what precedes (with $C=1$). For $E_{f_0}^Y$ the expectation operator corresponding to $P_{f_0}^Y$ and by Markov's inequality, it suffices to prove convergence to zero of
$$E_{f_0}^Y\Pi(D_\varepsilon^c|Y)= E_{f_0}^Y\Pi(D_\varepsilon^c|Y)1_{A_\varepsilon} + E_{f_0}^Y\Pi(D_\varepsilon^c|Y)1_{A^c_\varepsilon}.$$ Since $\Pi(\cdot|Y) \le 1$ the first quantity is less than $P_{f_0}^Y(A_\varepsilon)$ and hence converges to zero. For the second term we have
\begin{align} \label{lowbd}
 E_{f_0}^Y\Pi(D_\varepsilon^c|Y)1_{A^c_\varepsilon} &\le \frac{e^{2 (\delta_\varepsilon/\varepsilon)^2}}{\Pi(f: \|I_a(f-f_0)\|_{H_2}^2 \le \delta_\varepsilon^2)} \int_{D_\varepsilon^c} E_{f_0}^Y[e^{\ell(f)-\ell(f_0)}]d\Pi(f) \notag \\
& \le e^{2 (\delta_\varepsilon/\varepsilon)^2} e^{\phi_{\Pi, f_0}(\delta_\varepsilon/2)} \Pi(D_\varepsilon^c).
\end{align}
noting that $E_{f_0}^Y[e^{\ell(f)-\ell(f_0)}]=1$ and where
\begin{equation}\label{concinf}
\phi_{\Pi, f_0}(\delta) = \inf_{w \in V_\Pi, \|I_a(w-f_0)\|_{L^2_\mu(\partial_+ SM)} \le \delta} \Big[\frac{\|w\|_{V_\Pi}^2}{2} - \log \Pi (f:\|I_af\|_{L^2_\mu(\partial_+ SM)} \le \delta) \Big],
\end{equation}
using Proposition 2.6.19 and Exercise 2.6.5 in \cite{GN16}, with RKHS $I_a(V_\Pi)$ of $I_af$ isometric to $V_\Pi$ since $I_a$ is linear and injective. Now we have for all $\delta>0$ that $\phi_{\Pi, f_0} (\delta) \le  \phi_{\tilde \Pi, f_0}(\delta/c)$ since Theorem \ref{main0} implies
\begin{equation*}
\|I_af\|_{L^2_\mu(\partial_+SM)}=\|I_a(D(h))\|_{L^2_\mu(\partial_+SM)} \le c\|h\|_\infty
\end{equation*}
so that $$- \log \Pi (f:\|I_af\|_{L^2_\mu(\partial_+SM)} \le \delta) \le - \log \tilde \Pi (h:\|h\|_{\infty} \le \delta/c)  $$ as well as  $$\|I_a(w-f_0)\|_{L^2_\mu(\partial_+SM)} = \|I_a(D(v-D^{-1}f_0))\|_{L^2_\mu(\partial_+SM)} \le c \|v-D^{-1}f_0\|_\infty$$ where $v = \sqrt {d_M} w \in V_{\tilde \Pi}$ corresponds to $w=Dv=d_M^{-1/2}v \in V_\Pi$. Thus by (\ref{cfcond}) the right hand side of (\ref{lowbd}) is bounded above by 
$$e^{2 (\delta_\varepsilon/\varepsilon)^2} e^{\phi_{\tilde \Pi, f_0}(\delta_\varepsilon/2c)} \Pi(D_\varepsilon^c) \le e^{(3 - D_0)(\delta_\varepsilon/\varepsilon)^2} \to 0$$ for $D_0>3$, completing the proof.
\end{proof}
For $\psi \in C^\infty(M)$ define now $\tilde \psi = -(I_a^*I_a)^{-1}\psi$. We have from Theorem \ref{main0} that $\tilde \psi$ can be written as $\tilde \psi = d_M^{-1/2} \bar \psi$ for some $\bar \psi \in C^\infty (M)$. Therefore, since the RKHS $V_{\tilde \Pi}$ of $\tilde \Pi$ contains $C^\infty(M)$ we have
\begin{equation}\label{rkhs}
\|\tilde \psi\|_{V_\Pi}^2 = \|\sqrt {d_M} \tilde \psi \|_{V_{\tilde \Pi}}^2 = \|\bar \psi\|_{V_{\tilde \Pi}}^2 \le C.
\end{equation} 
Next, the random variable $\langle \tilde \psi, f \rangle_{V_\Pi}, f \sim \Pi,$ is $\mathcal N(0,  \|\tilde \psi\|_{V_\Pi}^2)$ and the standard Gaussian tail inequality guarantees for all $u, \delta \ge0$ that $$\Pi\left( f: \frac{|\langle \tilde \psi, f \rangle_{V_\Pi}|}{\|\tilde \psi\|_{V_\Pi}} > u\frac{\delta}{\varepsilon} \right) \le e^{-u^2 (\delta/\varepsilon)^2/2}$$ hence Lemma \ref{postex} applies to the set $$D_\varepsilon = \left\{f: \frac{|\langle \tilde \psi, f \rangle_{V_\Pi}|}{\|\tilde \psi\|_{V_\Pi}} \le K\frac{\delta_\varepsilon}{\varepsilon} \right\}$$ whenever $K > \sqrt 6$, and in deriving the asymptotic distribution of the posterior measure we can restrict to the posterior distribution $\Pi^{D_\varepsilon}(\cdot|Y)$ arising from the prior $\Pi^{D_\varepsilon}=\Pi(\cdot \cap D_\varepsilon)/\Pi(D_\varepsilon)$. 

\begin{Proposition}
Assume Condition \ref{setting}. For $\psi \in C^\infty(M)$, define the random variables
\begin{equation} \label{effp}
\hat \Psi = \langle f_0, \psi \rangle_{H_1} - \varepsilon \langle I_a (I_a^*I_a)^{-1}\psi, \mathbb W\rangle_{H_2}.
\end{equation}
Then for all $\tau \in \mathbb R$ and as $\varepsilon \to 0$ we have
\begin{equation}\label{limit}
E^{\Pi_{D_\varepsilon}} \left[e^{\frac{\tau}{\varepsilon} \left(\langle f, \psi \rangle_{H_1} - \hat \Psi \right)} |Y\right]  = e^{\frac{\tau^2}{2} \|I_a(I_a^*I_a)^{-1}\psi\|_{H_2}^2} \times (1+o_{P_{f_0}^Y}(1)).
\end{equation}
\end{Proposition}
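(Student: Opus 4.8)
This is the Laplace-transform form of a Bernstein--von Mises expansion, and the natural strategy is a Cameron--Martin translation of the prior. Write the restricted posterior expectation in \eqref{limit} as $T=N/D$ with
\[
D=\int_{D_\varepsilon} e^{\ell(f)-\ell(f_0)}\,d\Pi(f),\qquad N=\int_{D_\varepsilon} e^{\frac{\tau}{\varepsilon}(\langle f,\psi\rangle_{H_1}-\hat\Psi)}\,e^{\ell(f)-\ell(f_0)}\,d\Pi(f),
\]
where, by Lemma \ref{LAN} under $P_{f_0}^Y$, $\ell(f)-\ell(f_0)=-\frac{1}{2\varepsilon^2}\|I_a(f-f_0)\|_{H_2}^2+\frac1\varepsilon\langle I_a(f-f_0),\mathbb W\rangle_{H_2}$. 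I would first rewrite $\langle f,\psi\rangle_{H_1}=\langle f_0,\psi\rangle_{H_1}-\langle I_a(f-f_0),I_a\tilde\psi\rangle_{H_2}$ using $I_a^*I_a\tilde\psi=-\psi$ (Theorem \ref{main0}(a), equivalently \eqref{ch25}), so that the full exponent of the $N$-integrand becomes a quadratic-plus-linear expression in $I_a(f-f_0)$, $I_a\tilde\psi$ and $\mathbb W$. The plan is then to absorb the extra linear term $\tfrac{\tau}{\varepsilon}\langle f,\psi\rangle_{H_1}$ by translating the prior in the direction $\tilde\psi$. This is admissible because Theorem \ref{main0}(a) gives $\tilde\psi=d_M^{-1/2}\bar\psi$ with $\bar\psi\in C^\infty(M)$, hence $\tilde\psi\in V_\Pi$ with $\|\tilde\psi\|_{V_\Pi}^2=\|\bar\psi\|_{V_{\tilde\Pi}}^2$ bounded, cf.\ \eqref{rkhs}.

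Concretely, substitute $f=f'-\varepsilon\tau\tilde\psi$ in $N$. By the Cameron--Martin theorem this introduces the density $\exp\!\big(\varepsilon\tau\langle\tilde\psi,f'\rangle_{V_\Pi}-\tfrac{1}{2}\varepsilon^2\tau^2\|\tilde\psi\|_{V_\Pi}^2\big)$, where $\langle\tilde\psi,f'\rangle_{V_\Pi}$ is the almost-surely defined Paley--Wiener functional (a centred Gaussian of variance $\|\tilde\psi\|_{V_\Pi}^2$ under $\Pi$), and turns the integration set into $D_\varepsilon+\varepsilon\tau\tilde\psi$. Expanding the exponent after the shift, the terms linear in $I_a(f'-f_0)$ other than the noise pairing $\varepsilon^{-1}\langle I_a(f'-f_0),\mathbb W\rangle_{H_2}$ cancel exactly because $I_a^*I_a\tilde\psi=-\psi$, and the terms containing $\langle I_a\tilde\psi,\mathbb W\rangle_{H_2}$ cancel against the $\hat\Psi$-centring \eqref{effp}; what survives is the deterministic constant $\tfrac{\tau^2}{2}\|I_a\tilde\psi\|_{H_2}^2=\tfrac{\tau^2}{2}\|I_a(I_a^*I_a)^{-1}\psi\|_{H_2}^2$, finite by Theorem \ref{main0}(c), together with an integrand identical to that of $D$. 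Thus
\[
N=e^{\frac{\tau^2}{2}\|I_a(I_a^*I_a)^{-1}\psi\|_{H_2}^2}\,e^{-\frac12\varepsilon^2\tau^2\|\tilde\psi\|_{V_\Pi}^2}\int_{D_\varepsilon+\varepsilon\tau\tilde\psi} e^{\varepsilon\tau\langle\tilde\psi,f'\rangle_{V_\Pi}}\,e^{\ell(f')-\ell(f_0)}\,d\Pi(f'),
\]
and the remaining task is to show that the integral on the right equals $(1+o_{P_{f_0}^Y}(1))\,D$.

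For that I would check: (i) $e^{-\frac12\varepsilon^2\tau^2\|\tilde\psi\|_{V_\Pi}^2}\to1$, as $\|\tilde\psi\|_{V_\Pi}$ is bounded; (ii) on $D_\varepsilon+\varepsilon\tau\tilde\psi$ the defining constraint of $D_\varepsilon$ yields $|\langle\tilde\psi,f'\rangle_{V_\Pi}|\le K\delta_\varepsilon\|\tilde\psi\|_{V_\Pi}/\varepsilon+\varepsilon|\tau|\|\tilde\psi\|_{V_\Pi}^2$, whence $\varepsilon|\tau|\,|\langle\tilde\psi,f'\rangle_{V_\Pi}|\to0$ uniformly (using $\delta_\varepsilon\to0$), so $e^{\varepsilon\tau\langle\tilde\psi,f'\rangle_{V_\Pi}}=1+o(1)$ uniformly on the region; and (iii) the translated slab $D_\varepsilon+\varepsilon\tau\tilde\psi$ is sandwiched between slabs $\{f:|\langle\tilde\psi,f\rangle_{V_\Pi}|\le (K\pm o(1))\delta_\varepsilon\|\tilde\psi\|_{V_\Pi}/\varepsilon\}$, because the translation moves the defining functional by $\varepsilon\tau\|\tilde\psi\|_{V_\Pi}^2=O(\varepsilon)=o(\delta_\varepsilon)$ (here $\delta_\varepsilon/\varepsilon\to\infty$). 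Since $K>\sqrt6$ makes $K^2/2>3$, the Gaussian tail bound $\Pi(|\langle\tilde\psi,f\rangle_{V_\Pi}|>t\|\tilde\psi\|_{V_\Pi})\le e^{-t^2/2}$ shows each such slab has complement of $\Pi$-mass $\le e^{-D_0(\delta_\varepsilon/\varepsilon)^2}$ for some $D_0>3$; so Lemma \ref{postex} applies both to $D_\varepsilon$ and to $D_\varepsilon+\varepsilon\tau\tilde\psi$ and gives that $\int_{D_\varepsilon+\varepsilon\tau\tilde\psi}e^{\ell-\ell_0}\,d\Pi$ and $D$ are each $(1+o_{P_{f_0}^Y}(1))\int e^{\ell-\ell_0}\,d\Pi$ (the common normaliser being bounded below with probability tending to one, by the small-ball estimate in the proof of Lemma \ref{postex}), hence their ratio tends to $1$ in $P_{f_0}^Y$-probability. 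Combining (i)--(iii) with the display above yields \eqref{limit}.

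The main obstacle I anticipate is step (iii): one must ensure that translating the ``good set'' $D_\varepsilon$ by $\varepsilon\tau\tilde\psi$ --- a vector that is shrinking but not obviously negligible at the scale $\delta_\varepsilon/\varepsilon$ cutting out $D_\varepsilon$ --- affects neither the posterior normalisation nor the uniform bound on the Cameron--Martin density. This is precisely why $D_\varepsilon$ was chosen as a slab in the single $V_\Pi$-direction $\tilde\psi$, and with the rate window $\delta_\varepsilon\to0$, $\delta_\varepsilon/\varepsilon\to\infty$: the shape makes the shift act as a bounded translation of one Gaussian coordinate, and the rate window makes that translation asymptotically invisible at the resolution at which Lemma \ref{postex} operates. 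A secondary, more routine point is justifying the Cameron--Martin step in the first place --- since $\Pi$ need not be supported in $L^2(M)$ --- which is taken care of by Theorem \ref{main0}(a) placing $\tilde\psi$ in the Cameron--Martin space $V_\Pi$ of $\Pi$.
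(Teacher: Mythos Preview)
Your proposal is correct and follows essentially the same route as the paper: a Cameron--Martin shift by $\varepsilon\tau\tilde\psi\in V_\Pi$, the algebraic cancellation via $I_a^*I_a\tilde\psi=-\psi$ to isolate the constant $\tfrac{\tau^2}{2}\|I_a\tilde\psi\|_{H_2}^2$, the uniform bound $\varepsilon|\langle\tilde\psi,\cdot\rangle_{V_\Pi}|\to0$ on the (shifted) slab, and finally Lemma \ref{postex} applied to both $D_\varepsilon$ and its translate. The paper organises the computation slightly differently---it inserts $e^{\ell(f_\tau)-\ell(f_\tau)}$ and then changes variables, arriving at the ratio $\Pi(D_{\varepsilon,\tau}|Y)/\Pi(D_\varepsilon|Y)$---but your direct substitution $f=f'-\varepsilon\tau\tilde\psi$ and comparison of both integrals to the unrestricted normaliser is equivalent; your step (iii) is exactly the paper's closing Gaussian-tail argument for $\Pi(D_{\varepsilon,\tau}^c)$.
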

\begin{proof}
The left hand side of (\ref{limit}) equals, for $f_\tau = f+\tau \varepsilon \tilde \psi$,
\begin{align} \label{keyid}
&E^{\Pi_{D_\varepsilon}} \left[e^{\frac{\tau}{\varepsilon} \langle f -f_0, \psi \rangle_{H_1} +\tau \langle I_a (I_a^*I_a)^{-1} \psi, \mathbb W \rangle_{H_2} } |Y\right] \notag \\
&=  e^{\tau \langle I_a(I_a^*I_a)^{-1} \psi, \mathbb W \rangle_{H_2} }\frac{\int_\mathcal F e^{\frac{\tau}{\varepsilon} \langle f - f_0, \psi \rangle_{H_1} +\ell(f)-\ell (f_\tau) + \ell(f_\tau)} d\Pi^{D_\varepsilon}(f)}{\int_\mathcal F e^{\ell(f)} d\Pi^{D_\varepsilon}(f)} \notag \\
&= e^{\frac{\tau^2}{2} \|I_a\tilde \psi\|_{H_2}^2} \frac{\int_{D_\varepsilon} e^{\ell(f_\tau)} d\Pi(f)}{\int_{D_\varepsilon} e^{\ell(f)} d\Pi(f)}
\end{align}
since by Lemma \ref{LAN}
\begin{align*}
\ell(f)-\ell (f_\tau) &=-\frac{1}{2\varepsilon^2} \left(\| I_a(f - f_0)\|_{H_2}^2 - \|I_af - I_af_0 + \tau \varepsilon I_a\tilde \psi\|_{H_2}^2\right) + \tau \langle I_a\tilde \psi, \mathbb W \rangle_{H_2} \\
&= -\tau \langle I_a(I_a^*I_a)^{-1}\psi, \mathbb W \rangle_{H_2}  + \frac{\tau^2}{2} \|I_a\tilde \psi\|_{H_2}^2 + \frac{\tau}{\varepsilon} \langle I_a(f-f_0), I_a\tilde \psi\rangle_{H_2}
\end{align*}
and since by Theorem \ref{main0}
\begin{equation}\label{sieben}
 \langle I_a(f-f_0), I_a (I_a^*I_a)^{-1} \psi \rangle_{H_2} = \langle f-f_0, \psi \rangle_{H_1}.
\end{equation}
By the Cameron-Martin theorem (\cite{GN16}, Theorem 2.6.13) the last ratio in (\ref{keyid}) equals, for $\Pi_\tau$ the shifted law of $f_\tau, f \sim \Pi$,
$$ \frac{\int_{D_{\varepsilon, \tau}} e^{\ell(g)} \frac{d\Pi_\tau}{d\Pi}(g) d\Pi(g)}{\int_{D_\varepsilon} e^{\ell(g)} d\Pi(g)}= \frac{\int_{D_{\varepsilon, \tau}} e^{\ell(g)} e^{ \tau \varepsilon \langle \tilde \psi, g \rangle_{V_\Pi} - (\tau \varepsilon)^2 \|\tilde \psi\|_{V_\Pi}^2/2}  d\Pi(g)}{\int_{D_\varepsilon} e^{\ell(g)} d\Pi(g)}$$ where $D_{\varepsilon, \tau} =\{g=f_\tau: f \in D_\varepsilon\}$.
In view of (\ref{rkhs}) we have $\varepsilon\|\tilde \psi\|_{V_\Pi} \to 0$ as $\varepsilon \to 0$, and by definition of $D_\varepsilon$ and the Cauchy-Schwarz inequality we have convergence to zero of $$\varepsilon \sup_{g \in D_{\varepsilon, \tau}}|\langle \tilde \psi, g \rangle_{V_\Pi}| = \varepsilon \sup_{f \in D_\varepsilon}|\langle \tilde \psi, f + \tau \varepsilon \tilde \psi \rangle_{V_\Pi}| \le K \varepsilon \frac{\delta_\varepsilon}{\varepsilon} \|\tilde \psi\|_{V_\Pi} + |\tau| \varepsilon^2 \|\tilde \psi\|^2_{V_\Pi}$$
since $\delta_\varepsilon \to 0$. Conclude  that the last ratio is, for every $\tau \in \mathbb R$, $$ (1+o(1)) \frac{\int_{D_{\varepsilon, \tau}} e^{\ell(g)}  d\Pi(g)}{\int_{D_\varepsilon} e^{\ell(g)} d\Pi(g)} = (1+o(1)) \frac{\Pi(D_{\varepsilon, \tau}|Y)}{\Pi(D_{\varepsilon}|Y)}$$ as $\varepsilon \to 0$, and the proof is completed by showing that both the numerator and the denominator of the last ratio converge to one in probability: The denominator $\Pi(D_\varepsilon|Y)$ converges to one in $P_{f_0}^Y$-probability by Lemma \ref{postex}. The same is true for the numerator by applying Lemma \ref{postex} once more, since the Gaussian tail inequality  guarantees for $\sqrt 6<k<K$ and every $\tau \in \mathbb R$ that for $\varepsilon$ small enough that
\begin{align*}
\Pi(D^c_{\varepsilon, \tau}) &= \Pi\left(v: \frac{|\langle \tilde \psi, v- \tau \varepsilon \tilde \psi \rangle_{V_\Pi}|}{\|\tilde \psi\|_{V_\Pi}} > K\delta_\varepsilon/\varepsilon \right) \\
& \le \Pi\left(v: \frac{|\langle \tilde \psi, v \rangle_{V_\Pi}|}{\|\tilde \psi\|_{V_\Pi}} > K\frac{\delta_\varepsilon}{\varepsilon} - |\tau| \|\tilde \psi\|_{V_\Pi} \varepsilon \right)
\le e^{-k^2 (\delta_\varepsilon/\varepsilon)^2/2} .
\end{align*}
\end{proof}

Theorem \ref{main} now follows from the fact that convergence in total variation distance implies convergence in any metric for weak convergence, so that in view of Lemma \ref{postex} it suffices to prove the theorem with $\Pi^{D_\varepsilon}(\cdot|Y)$ replacing $\Pi(\cdot|Y)$, and using the previous proposition plus the fact that pointwise convergence of Laplace transforms (in probability) implies weak convergence (in probability), see, e.g., Proposition 30 in \cite{N17}.

\subsection{Proof of Theorem \ref{tikh}}

Let $(\Omega, \mathcal S, \Pr)$ be the probability space supporting the random variable $Y$ from (\ref{model0}) with law $P_{f_0}^Y$ (this space is implicitly constructed before (\ref{posto}) via the results from Section 7.3 in \cite{N17}). We show that  $\varepsilon^{-1} E^\Pi[\langle f, \psi \rangle_{H_1} - \hat \Psi |Y]$ converges to $0$ in $P_{f_0}^Y$-probability which implies the result since then by definition of $\hat \Psi$ we then have $$\varepsilon^{-1}\langle \bar f(Y) - f_0, \psi \rangle_{H_1} = -\langle I_a (I_a^*I_a)^{-1} \psi, \mathbb W \rangle_{H_2} + o_{P_{f_0}^Y}(1).$$  We argue by contradiction: Let $\varepsilon_m$ be any sequence such that $\varepsilon_m \to 0$ but assume $\varepsilon_m^{-1} E^\Pi[\langle f, \psi \rangle_{H_1} - \hat \Psi |Y]$ does not converge to $0$ in probability. Then there exists an event $\Omega' \in \mathcal S$ of positive probability $\Pr(\Omega')>0$ and $\xi>0$ such that along a subsequence of $m$,
\begin{equation}\label{tocont}
|\varepsilon_m^{-1} E^\Pi[\langle f, \psi \rangle_{H_1} - \hat \Psi |Y(\omega)]| \ge \xi~~\forall \omega \in \Omega'.
\end{equation} 
Since convergence in probability implies convergence almost surely along a subsequence, we can extract a further subsequence, still denoted by $\varepsilon_m$, for which we deduce from Theorem \ref{main} that $$\beta\big(\mathcal L( \varepsilon_m^{-1} (\langle f, \psi \rangle_{H_1} - \hat \Psi ) |Y), \mathcal L(Z) \big) \to 0$$ almost surely for $\Pr$, and where $Z$ is a $\mathcal N(0, \|I_a(I_a^*I_a)^{-1}\psi\|^2_{L^2_\mu(\partial_+(SM))})$ random variable.  Fix the event $\Omega_0 \subset \Omega$ of probability one where the last limit holds: then for every fixed $\omega \in \Omega_0$ we have the convergence in distribution $$\Psi_m (\omega) \equiv \varepsilon_m^{-1} (\langle f, \psi \rangle_{H_1} - \hat \Psi ) |Y(\omega) \to^d Z.$$ By Skorohod's theorem on almost surely convergent realisations of weakly convergent random variables (Theorem 11.7.2 in \cite{D02}) we can find, for every fixed $\omega \in \Omega_0$, a probability space on which we can define random variables $\tilde \Psi_m(\omega), \tilde Z$ such that $\mathcal L(\Psi_m(\omega))=\mathcal L(\tilde \Psi_m(\omega)), \mathcal L(Z)=\mathcal L(\tilde Z)$ and $$\tilde \Psi_m(\omega) - \tilde Z \to 0$$ almost surely as $m \to \infty$. By standard conjugacy arguments the law of $h=I_af|Y$ is a Gaussian measure on $L_\mu^2(\partial_+ SM)$. By Theorem \ref{main0}, when integrating against $\psi \in C^\infty(M)$ we see $$\langle f, \psi \rangle_{L^2(M)} = \langle h, I_a (I_a^*I_a)^{-1}  \psi \rangle_{L_\mu^2(\partial_+ SM)} $$ which is a well-defined normal distribution on the real line since the mapping $h \mapsto \langle h, I_a (I_a^*I_a)^{-1}  \psi \rangle_{L_\mu^2(\partial_+ SM)}$ from $L_\mu^2(\partial_+ SM) \to \mathbb R$  is linear and continuous in view of $I_a (I_a^*I_a)^{-1}  \psi \in L_\mu^2(\partial_+ SM)$, using Theorem \ref{main0} once more. Thus for every $\omega \in \Omega_0$ the variables $\tilde \Psi_m(\omega) -  \tilde Z,~m \in \mathbb N,$ are all  Gaussian and by the usual Paley-Zygmund argument (e.g., Exercise 2.1.4 in \cite{GN16}) almost sure convergence implies convergence of all moments, in particular $E|\tilde \Psi_m(\omega) - \tilde Z| \to 0$ as $m \to \infty$. From this we deduce, for all $\omega \in \Omega_0, \Pr(\Omega_0)=1,$ that $$ \varepsilon_m^{-1} E^\Pi[\langle f, \psi \rangle_{H_1} - \hat \Psi  |Y(\omega)]=E\Psi_m(\omega) = E[\tilde \Psi_m(\omega)] \to E \tilde Z = EZ=0$$ as $m \to \infty$, a contradiction to (\ref{tocont}) with $\Pr(\Omega')>0$, completing the proof.

\section{Proofs for Section \ref{xray}}\label{pxray}

In this section we prove Theorem \ref{main0} and we will do so by putting the theory into the framework of the transmission condition as developed in \cite{Ho0,Grubb2}.  We will give full details for the case of the geodesic X-ray transform $I$ and  indicate the (minor) modifications necessary for the proof to work also for the attenuated X-ray transform $I_{a}$ at the end. We note that previously known results only give that $I^*I$ is injective on $L^{2}(M)$ and surjectivity properties were only obtained after enlarging $M$ (as in \cite{PU}). These results are not sufficient to obtain the theorems in Section \ref{bvmsec}, nor do they expose the precise boundary behaviour as we do here.

\subsection{Setting up the scene and main ideas}\label{sec:roadmap}

We shall denote by $N$ the normal, or `information' operator $I^*I:L^{2}(M)\to L^{2}(M)$ introduced in Section \ref{xray}. An integral formula for $N$ can be derived directly from the expressions for $I$ and $I^*$:
\begin{equation}
Nf(x)=2\int_{S_{x}M}dv\int_{0}^{\tau(x,v)}f(\gamma_{x,v}(t))\,dt.
\label{eq:N}
\end{equation}

A property of fundamental importance is that whenever $(M,g)$ has no conjugate points, then, in the interior of $M$, the operator $N$ is an elliptic pseudo-differential operator ($\Psi$DO) of order $-1$ with principal symbol $c_d|\xi|^{-1}$, cf. \cite[Section 6.3]{GS}, \cite{SU} or Lemma 3.1 in \cite{PU}. [The reference \cite{GS} states this property under the so called {\it Bolker condition}, which is seen to be equivalent in our case to the absence of conjugate points.] We refer to \cite{Treves} for a treatment of $\Psi$DOs. In particular recall that for $P$ a classical $\Psi$DO of order $m\in \mathbb{C}$, a full symbol in local coordinates is denoted by $p(x,\xi)\sim \sum_{j=0}^{\infty}p_{j}(x,\xi)$ where $p_{j}(x,t\xi)=t^{m-j}p_{j}(x,\xi)$, and where $p_0$ is the {\it principal symbol}. The operator $P$ is {\it elliptic} if $p_0(x,\xi)\ne 0$ for all $(x,\xi)$ in the cotangent bundle, $\xi\ne 0$. 

Recall that $(M,g)$ is called {\it simple} if it is non-trapping, has strictly convex boundary and no conjugate points. Simple manifolds are simply connected; in fact they are diffeomorphic to balls in Euclidean space. From now on we shall assume that $(M,g)$ is simple.
It will be convenient for what follows to consider $(M, g)$ isometrically embedded into a closed manifold $(S,g)$. Since $M$ is simple, there is an open neighborhood $U_{1}$ of $M$ in $S$, such that its closure $M_{1}:=\overline{U}_{1}$  is a compact simple manifold, see Fig. \ref{fig:M}. Let $I_{1}$ denote the geodesic ray transform associated to $(M_{1},g)$ and let $N_{1}=I_{1}^*I_{1}$.

\begin{figure}[htpb]
    \centering
    \includegraphics[height=0.25\textheight]{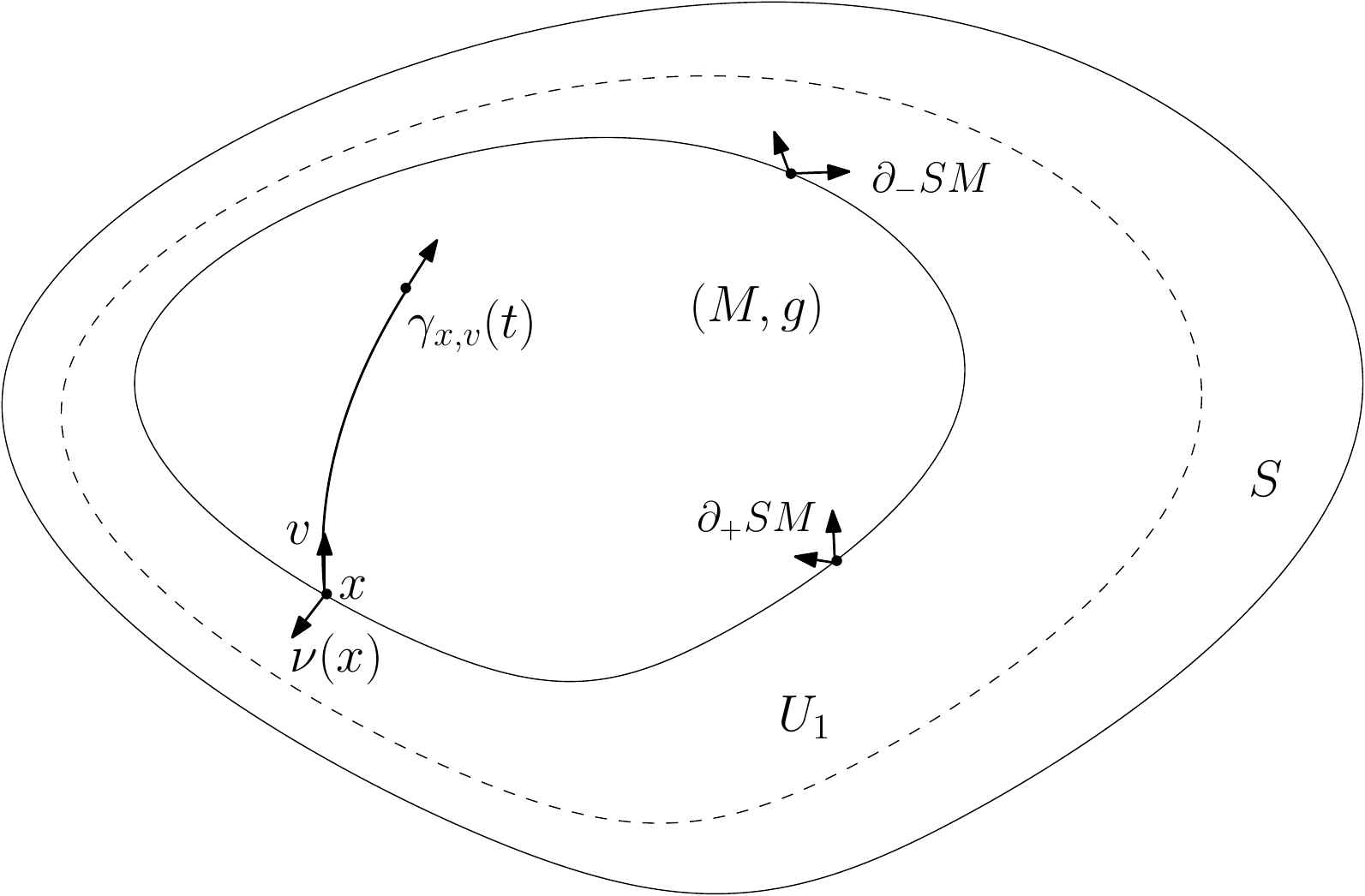}
    \caption{Setting}
    \label{fig:M}
\end{figure}


Following \cite{PU} we may cover $(S,g)$ with finitely many simple open sets $U_{k}$ with $M\subset U_{1}$, $M\cap \overline{U}_{j}=\emptyset$ for $j\geq 2$, and consider a partition of unity $\{\varphi_{k}\}$ subordinate to $\{U_{k}\}$ so that $\varphi_{k}\geq 0$, $\text{supp}\,\varphi_{k}\subset U_{k}$ and $\sum\varphi_{k}^2=1$. We pick $\varphi_{1}$ such that $\varphi_{1} \equiv 1$ on a neighborhood of $M$ compactly supported in $U_1$. Hence, for $I_k$ the ray transform associated to $(U_k, g)$, we can define
\begin{equation}    
    Pf:=\sum_{k}\varphi_{k}(I^{*}_{k}I_{k})(\varphi_{k} f), \qquad f\in C^{\infty}(S).
    \label{eq:P}
\end{equation}
Each operator $I^{*}_{k}I_{k}:C^{\infty}_{c}(U_{k})\to C^{\infty}(U_{k})$ is an elliptic $\Psi$DO of order $-1$ and principal symbol $c_{d}|\xi|^{-1}$, and hence so is $P$.  Having $P$ defined on a closed manifold is convenient, since one can use standard mapping properties for $\Psi$DOs.  For instance for $P$ defined by \eqref{eq:P} we have
\[ P:H^{s}(S)\to H^{s+1}(S) \qquad \text{for all } s\in \mathbb{R}, \]
where $H^{s}(S)$ denotes the standard $L^2$ Sobolev space of the closed manifold $S$  (when $s$ is a nonnegative integer, $H^{s}(S)$ can be identified with the set of $u\in L^{2}(S)$ such that $Du\in L^{2}(S)$ for all differential operators $D$ of order $\leq s$ with coefficients in $C^{\infty}(S)$, see \cite{T} for the definition for arbitrary $s\in \mathbb{R}$).

Let $r_{M}:L^{2}(S)\to L^{2}(M)$ denote restriction to $M^{\text{int}}$, the interior of $M$, and $e_{M}:L^{2}(M)\to L^{2}(S)$ extension by zero. (We could consider restriction to $M$ as well, but this makes no difference since the boundary of $M$ has measure zero.)
Both operators are bounded and dual to each other. Since $\varphi_{1}=1$ near $M$, given $f\in C^{\infty}_{c}(M^{\text{int}})$ (smooth functions with compact support contained in $M^{\text{int}}$) we have 
\[r_{M}Pe_{M}f=r_{M}N_{1}\varphi_{1} e_{M}f=r_{M}N_{1}e_{M}f.\]
Equation (\ref{eq:N}) shows that $r_{M}N_{1}e_{M}f=Nf$ and thus by density of $C^{\infty}_{c}(M^{\text{int}})$ in $L^{2}(M)$, we have that $P$ and $N$ are related by the following truncation process:
\begin{equation}
    N = r_{M}Pe_{M}\qquad\text{in}\;L^{2}(M).
    \label{eq:truncation}
\end{equation}
Since $P:L^{2}(S)\to H^{1}(S)$, this gives immediately the mapping property $N:L^{2}(M)\to H^{1}(M)$ when the spaces $H^s(M)$ are defined by restriction (\ref{restnorm}). Since the embedding  $H^{1}(M)\hookrightarrow L^{2}(M)$ is compact, obviously $N:L^{2}(M)\to L^{2}(M)$ is compact (and hence $I$).

However, without further analysis not much more can be said about the mapping properties of $N$, especially if we are interested in functions supported all the way to the boundary of $M$. From (\ref{eq:truncation}), we see that $e_{M}$ could produce singularities for higher order Sobolev spaces, preventing good mapping properties in `smooth topologies'. A key input of Boutet de Monvel \cite{BdeM1,BdeM2} (see also \cite{CP}) was to show that a necessary and sufficient condition for $P$, a $\Psi DO$ of order $m$ defined on $S$, to satisfy $r_M P e_M (C^\infty(M))\subset C^\infty(M)$, is that $P$ satisfies the {\it transmission condition} with respect to $\partial M$ in the sense that 
\begin{align}
    \partial_x^\beta \partial_\xi^\alpha p_j(x,\nu(x)) = e^{\pi i (m-j-|\alpha|)} \partial_x^\beta \partial_\xi^\alpha p_j(x,-\nu(x)),
    \label{eq:transmission}
\end{align}
for all $j$, $\alpha$, $\beta$ and $x\in \partial M$.

Unfortunately such a condition does not hold in the case of $N_1$ (or $P$) defined above, as the following example shows: let $M$ be the unit disk in $\mathbb{R}^{2}$. An elementary calculation gives that $N(1)=4E(r)/\pi$ where $E$ is the complete elliptic integral of the second kind and $r$ is the radial coordinate. As $r$ approaches $1$, $E'(r)$ blows up and hence $N(1)\notin C^{\infty}(M)$, therefore $N = r_M N_1$ cannot satisfy the transmission condition. 

Furthermore for purposes of inversion, even if $h\in C^\infty(M)$, we cannot expect the solutions $f$ to $N(f) = h$ to be in $C^\infty(M)$ either: in the previous example, it is not hard to check that $I((1-r^{2})^{-1/2})$ ($r$ denoting distance to the origin) is a constant function \cite[Corollary 3.3]{L} and therefore so is 
\begin{equation}
    N((1-r^{2})^{-1/2})=c.
    \label{eq:ex}
\end{equation}

While $N_1$ does not satisfies condition \eqref{eq:transmission}, we show that it satisfies a modified transmission condition as introduced by H\"ormander in \cite{Ho0} and recently expanded and enhanced by Grubb in \cite{Grubb2}. Namely, given $\mu\in {\mathbb C}$ with real part $\Re\mu >-1$, we say that $P$, a $\Psi DO$ of order $m$ defined on $S$, satisfies a {\it transmission condition of type $\mu$} with respect to $\partial M$ if its symbol satisfies
\begin{align}
    \partial_x^\beta \partial_\xi^\alpha p_j(x,\nu(x)) = e^{\pi i (m-2\mu-j-|\alpha|)} \partial_x^\beta \partial_\xi^\alpha p_j(x,-\nu(x)),
    \label{eq:mutransmission}
\end{align}
for all $j$, $\alpha$, $\beta$ and $x\in \partial M$, generalizing the case $\mu=0$ given by \eqref{eq:transmission}. To tie this condition with mapping properties, using $\mu$ as above, let us define
\[ \mathcal{E}_{\mu}(M) := \{ e_{M} d_{M}(x)^{\mu}\varphi, \quad \varphi\in C^{\infty}(M)\}, \]
where $d_M(x)$ is a $C^\infty(M)$-function equal to $dist(x,\partial M)$ near $\partial M$ and positive on the interior of $M$. Then, as a generalization to Boutet de Monvel's result above, the following theorem appears in \cite[Theorem 18.2.18]{Ho}:
\begin{Theorem}\label{thm:hor} A necessary and sufficient condition in order that $r_{M}Pu\in C^{\infty}(M)$ for all $u\in \mathcal{E}_{\mu}(M)$ is that $P$ satisfies the $\mu$-transmission condition \eqref{eq:mutransmission}.
\end{Theorem}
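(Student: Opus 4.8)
This is Hörmander's Theorem 18.2.18 in \cite{Ho} (refined in \cite{Grubb2}); we only outline the strategy rather than reproduce the argument. The claim $r_M P u \in C^\infty(M)$ is local, and over the interior $M^{\mathrm{int}}$ it is automatic: since $u \in \mathcal{E}_\mu(M)$ is conormal with respect to $\partial M$, its wavefront set lies in the conormal bundle $N^*(\partial M)$, which is empty over $M^{\mathrm{int}}$, so $Pu$ is smooth there by pseudolocality of $\Psi$DOs. Hence only smoothness up to $\partial M$ is at issue. Fix $x_0 \in \partial M$ and introduce boundary normal coordinates in which $\partial M = \{x_n = 0\}$, $M = \{x_n \ge 0\}$ and $d_M = x_n$ near $x_0$; the factor $d_M^\mu$ then becomes $x_n^\mu$ (a positive smooth Jacobian factor raised to the power $\mu$ is smooth and is absorbed into $\varphi$, which requires $\Re\mu>-1$ for $e_+ x_n^\mu$ to be a well-defined distribution). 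The operator $P$ transforms into another classical $\Psi$DO of order $m$ on the model half-space, and one must check --- a genuine step, using the transformation law for full symbols under diffeomorphisms together with the fact that each $\xi$-derivative or unit drop in homogeneity degree contributes exactly the sign factor $e^{\pi i}$ appearing in \eqref{eq:mutransmission} --- that the $\mu$-transmission condition is preserved. It therefore suffices to treat $P = \mathrm{Op}(p)$ on $\mathbb R^n = \mathbb R^{n-1}_{x'}\times\mathbb R_{x_n}$ acting on $u = e_+(x_n^\mu\varphi)$ with $\varphi \in C_c^\infty$, and to decide when $r_+ P u \in C^\infty(\overline{\mathbb R^n_+})$.

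The core is the model computation. Taking a partial Fourier transform in the normal variable, $\mathcal F_{x_n\to\xi_n}(e_+ x_n^\mu)$ equals, up to a $\Gamma$-factor, the boundary value of a homogeneous distribution of degree $-\mu-1$ with an $i0$-prescription dictated by the support condition $x_n\ge 0$. Applying $\mathrm{Op}(p)$ amounts (to leading order, the $x$-dependence being handled by the symbol calculus) to multiplication by $p(x,\xi',\xi_n)$, and restricting the inverse transform to $x_n>0$ is a Wiener--Hopf-type operation: deforming the $\xi_n$-contour into the half-plane where $e^{ix_n\xi_n}$ decays, the outcome is governed by the asymptotics of $p$ as $\xi_n\to+\infty$ versus $\xi_n\to-\infty$ with $\xi'$ fixed, i.e.\ by the values and $\xi$-derivatives of the homogeneous pieces $p_j$ at $\xi=\pm\nu$. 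Carrying this out term by term in $p\sim\sum_j p_j$, one finds that $r_+\mathrm{Op}(p_j)e_+(x_n^\mu\varphi)$ is, modulo functions smooth up to $x_n=0$, a finite sum of terms homogeneous in $x_n$ with generically non-integer exponents whose coefficients are precisely the differences between the two sides of \eqref{eq:mutransmission} for that $j$ and all $\alpha,\beta$ (the tangential derivatives $\partial_x^\beta$ arising from Taylor-expanding the symbol and $\varphi$ at the boundary). These anomalous terms vanish for all $\varphi$ exactly when the $\mu$-transmission condition holds at order $j$; an asymptotic (Borel-type) summation over $j$ --- the tail symbol $p-\sum_{j<N}p_j$ of order $m-N$ maps $u$ into arbitrarily smooth functions --- then yields both implications: sufficiency by adding up the vanishing, and necessity by choosing, when \eqref{eq:mutransmission} fails at a minimal order, a $\varphi$ concentrated near the offending boundary point for which the corresponding anomalous $x_n$-power survives in $r_M P u$ and obstructs $C^\infty$-smoothness.

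The main obstacle is making the model computation rigorous with full uniformity in the tangential parameters $(x',\xi')$: this requires the distribution theory of the classes $\chi_+^\mu$ and of boundary values $(\xi_n\mp i0)^{-\mu-1}$ of homogeneous distributions, control of the $\Psi$DO calculus all the way down to the boundary symbol, and careful bookkeeping of the interaction of the conormal singularity of $u$ with every term of the symbol expansion --- precisely the technical content of \cite[\S 18.2]{Ho}. Accordingly we invoke Theorem \ref{thm:hor} as a black box rather than reprove it.
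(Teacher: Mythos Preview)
The paper does not prove this theorem at all: it is simply quoted as \cite[Theorem 18.2.18]{Ho}, with no argument given. Your proposal is therefore not comparable to a ``paper's proof'' --- there is none --- but your sketch is a faithful outline of H\"ormander's actual argument (reduction to boundary normal coordinates, the model computation via the Fourier transform of $e_+ x_n^\mu$, Wiener--Hopf contour analysis producing anomalous $x_n$-powers whose coefficients are exactly the $\mu$-transmission defects, and Borel summation over the symbol expansion), and you correctly flag the genuine technical difficulties and end by invoking the result as a black box, which is precisely what the paper does.
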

To make use of the theorem above, we first prove in Section \ref{sec:transmission} that
\begin{Lemma}\label{lem:transmission} The operator $P$ defined in \eqref{eq:P} satisfies the transmission condition of type $\mu=-1/2$ with respect to $\partial M$. 
\end{Lemma}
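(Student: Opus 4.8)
The plan is to compute the full symbol expansion of $P$ in boundary normal coordinates and verify the parity relation \eqref{eq:mutransmission} with $m=-1$ and $\mu=-1/2$, which amounts to the condition
\begin{align}
    \partial_x^\beta \partial_\xi^\alpha p_j(x,\nu(x)) = e^{\pi i (-j-|\alpha|)} \partial_x^\beta \partial_\xi^\alpha p_j(x,-\nu(x)), \qquad x \in \partial M,
    \label{eq:mhalf}
\end{align}
for all $j,\alpha,\beta$. First I would reduce to a local computation: by \eqref{eq:P}, only the term $\varphi_1 (I_1^*I_1)(\varphi_1\,\cdot\,)$ of $P$ is relevant near $\partial M$ (all other $U_k$ are disjoint from $\overline M$), and since $\varphi_1\equiv 1$ near $M$ the cutoffs do not affect the symbol to any order on $\partial M$; multiplication by a smooth function that is $\equiv 1$ near $\partial M$ changes the symbol only by terms vanishing to infinite order there. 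So it suffices to analyse the symbol of $N_1 = I_1^*I_1$ at points of $\partial M$, which lie in the interior of the slightly larger simple manifold $M_1$, and there $N_1$ is a classical elliptic $\Psi$DO of order $-1$.

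Next I would bring in an explicit description of the symbol. The cleanest route is the representation of $N_1$ (equivalently the normal operator of the geodesic X-ray transform on a simple manifold without conjugate points) as a conormal-type kernel: its Schwartz kernel is, modulo smoothing, of the form $K(x,y) = |G(x,y)|\, dist_g(x,y)^{-(d-1)}$ times a smooth positive amplitude, where $G$ is smooth and nonvanishing, as in \cite{SU} (see also \cite[Section 6.3]{GS}, \cite{PU}). Equivalently one uses that $N_1$ is a $\Psi$DO whose full symbol $p(x,\xi)\sim\sum_j p_j(x,\xi)$ has homogeneous terms $p_j$ that are \emph{even} functions of $\xi$ for $j$ even and \emph{odd} for $j$ odd — this parity is inherited from the fact that the kernel is a function of the Riemannian distance (an even conormal distribution, symmetric under $x\leftrightarrow y$, with the $|v|$ Jacobian from \eqref{eq:justmu} producing only even-order corrections). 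Concretely, $p_j(x,-\xi) = (-1)^j p_j(x,\xi)$. Granting this, the left side of \eqref{eq:mhalf} is $\partial_x^\beta\partial_\xi^\alpha p_j(x,\nu(x))$ and the right side is $e^{\pi i(-j-|\alpha|)}\partial_x^\beta\partial_\xi^\alpha p_j(x,-\nu(x)) = (-1)^{j+|\alpha|}\partial_x^\beta\partial_\xi^\alpha p_j(x,-\nu(x))$; differentiating $p_j(x,-\xi)=(-1)^j p_j(x,\xi)$ in $\xi$ picks up $(-1)^{|\alpha|}$ more, giving exactly $\partial_\xi^\alpha p_j(x,-\nu(x)) = (-1)^{j+|\alpha|}\partial_\xi^\alpha p_j(x,\nu(x))$, so \eqref{eq:mhalf} holds identically. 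Thus the $-1/2$-transmission condition is automatic once the parity of the symbol is established; note the $\mu=0$ condition \eqref{eq:transmission} would instead require $(-1)^{j+|\alpha|} = e^{\pi i(m-j-|\alpha|)} = -(-1)^{j+|\alpha|}$ since $m=-1$, which fails — consistent with the explicit $N(1)\notin C^\infty(M)$ counterexample in the text.

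The main obstacle is pinning down the parity statement $p_j(x,-\xi)=(-1)^j p_j(x,\xi)$ rigorously, since the standard references state the principal symbol $c_d|\xi|^{-1}$ (which is even, the $j=0$ case) but not always the full expansion. I would handle this by writing $N_1 f(x) = \int K(x,y) f(y)\,dy$ with $K$ computed from \eqref{eq:N} via the change of variables $y=\gamma_{x,v}(t)$, $t\in(0,\tau)$, $v\in S_xM$: using no conjugate points, $(t,v)\mapsto y$ is a diffeomorphism onto $M$ near $y=x$, and its Jacobian combined with the double covering $v\leftrightarrow -v$ (reflecting that each geodesic through $x$ is counted from both endpoints) shows $K(x,y)$ depends on $(x,y)$ through a smooth even conormal distribution in the geodesic distance — equivalently $K(x,y)=K(y,x)$ (self-adjointness of $N_1$) together with the conormal singularity structure forces, in boundary-straightened coordinates, the amplitude to have the claimed $\xi$-parity. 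An alternative and perhaps safer route is to invoke the explicit parametrix constructions for $N_1$ on simple manifolds in \cite{PU, SU}, from which the homogeneity-and-parity structure of the full symbol can be read off, or simply to cite the computation in \cite{Grubb2, Ho0} that identifies the relevant transmission type for operators whose kernels are powers of a smooth nondegenerate distance function. Once the parity is in hand, the verification of \eqref{eq:mhalf} is the short sign bookkeeping above, and the lemma follows.
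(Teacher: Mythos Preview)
Your proposal is correct and follows essentially the same route as the paper: reduce to the symbol of $N_1$ near $\partial M$ (since $\varphi_1\equiv 1$ there), establish the evenness $p_j(x,-\xi)=(-1)^j p_j(x,\xi)$ of the full symbol, and then observe that for $m=-1$ this parity is exactly the $\mu=-1/2$ transmission condition. The paper isolates the parity statement as a separate lemma and offers three proofs, the quickest being via the explicit symbol formula in \cite[Lemma~B.1]{DPSU}; your sketch via the Schwartz kernel structure and the amplitude computation of \cite{SU} corresponds to the other two proofs they mention.
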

In particular, Lemma \ref{lem:transmission} and Theorem \ref{thm:hor} imply that 
\[ N:d_{M}^{-1/2}C^{\infty}(M) \to C^\infty(M) \]
is well-defined. Notice that the domain allows for functions which blow up near the boundary like $dist(x,\partial M)^{-\frac{1}{2}}$, explaining \eqref{eq:ex}.

After constructing in Section \ref{section:S+H} appropriate Hilbert-scale versions of ${\mathcal E}_{-1/2}(M)$, namely, the H\"ormander spaces $H^{-1/2(s)}(M)$, the first basic result \cite[Theorem 4.2]{Grubb2} applied to $r_{M}P$ gives further mapping properties: 
\begin{Theorem} 
    $r_{M}P$ maps $H^{-1/2(s)}(M)$ continuously into $H^{s+1}(M)$, where $P$ is defined in \eqref{eq:P} and $s>-1$.
\end{Theorem}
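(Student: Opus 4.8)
The plan is to obtain this as a direct instance of the general mapping theorem for $\mu$-transmission operators, \cite[Theorem 4.2]{Grubb2}; the only real work is to check that its hypotheses hold in our setting. First I would record that $P$ from \eqref{eq:P} is a classical $\Psi$DO of order $m=-1$ on the \emph{closed} manifold $S$ -- this is immediate, since each $I_k^*I_k$ is an elliptic $\Psi$DO of order $-1$ with principal symbol $c_d|\xi|^{-1}$ and $P$ is a finite sum of such operators pre- and post-composed with smooth multiplications. Next I would invoke Lemma \ref{lem:transmission}, which states that $P$ satisfies the transmission condition of type $\mu=-1/2$ with respect to $\partial M$, i.e.\ the symbol identities \eqref{eq:mutransmission} with $m=-1$ and $\mu=-1/2$. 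Finally I would note that the H\"ormander spaces $H^{-1/2(s)}(M)$ constructed in Section \ref{section:S+H} are, by their very definition, the $\mu$-transmission Sobolev spaces of \cite{Grubb2} with parameter $\mu=-1/2$; in particular each carries the canonical extension to $S$ that makes the composite $r_M P$ (canonical extension, then $P$ on $S$, then restriction to $M^{\mathrm{int}}$) well-defined, and $\overline{H}^{s+1}(M)$ in Grubb's notation is precisely the restriction space $H^{s+1}(M)$ of \eqref{restnorm}.

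With these identifications in hand, \cite[Theorem 4.2]{Grubb2} applies directly: a classical $\Psi$DO of order $m$ on $S$ satisfying the $\mu$-transmission condition at $\partial M$ induces a bounded map $r_M P\colon H^{\mu(s)}(M)\to\overline{H}^{\,s-m}(M)$ for every $s>\Re\mu-\tfrac12$. Substituting $m=-1$ and $\mu=-1/2$ turns $s-m$ into $s+1$ and the constraint into $s>-1$, which is exactly the asserted statement.

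The only genuine effort, and the step I expect to require the most care, is the bookkeeping that transports Grubb's theorem -- stated for a smooth bounded domain, resp.\ a compact manifold with boundary sitting in an ambient space -- to our geometry. Concretely I would: (i) localize near $\partial M$ by a finite boundary atlas with subordinate partition of unity, reducing to the half-space model $\overline{\mathbb{R}^n_+}$ where the $\mu$-transmission spaces and the mapping theorem are stated in \cite{Grubb2}; (ii) check that the $\mu$-transmission condition \eqref{eq:mutransmission}, being a coordinate-free condition on the symbol of $P$ along the conormal rays $\pm\nu(x)$, is preserved under these chart changes, so that Lemma \ref{lem:transmission} may be used chart by chart; and (iii) patch back the interior contribution, where $P$ acts as an ordinary order $-1$ $\Psi$DO and merely contributes a harmless $H^s_{\mathrm{loc}}\to H^{s+1}_{\mathrm{loc}}$ gain with no boundary subtlety. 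None of this is conceptually deep, but (i)--(ii) -- verifying that the invariant spaces $H^{-1/2(s)}(M)$ glue correctly across boundary charts and that the transmission condition survives -- are exactly what Sections \ref{sec:transmission} and \ref{section:S+H} are built to supply, so no new ingredient should be needed.
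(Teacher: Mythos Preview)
Your proposal is correct and follows exactly the paper's approach: the paper does not give a separate proof of this theorem but simply states it as the conclusion of \cite[Theorem 4.2]{Grubb2} applied to $r_M P$, once Lemma \ref{lem:transmission} (the $\mu=-1/2$ transmission condition) and the construction of the spaces $H^{-1/2(s)}(M)$ in Section \ref{section:S+H} are in hand. Your additional remarks on localization and chart-patching are more detailed than what the paper records, but the substance is identical.
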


While all the results above only discuss forward mapping properties, using ellipticity will show that such an operator is in fact Fredholm in the functional settings mentioned above. Then proving that its kernel and co-kernel are trivial will ensure that it will be invertible in these settings as well. In particular, the main result we prove below provides a full solution to the homogeneous Dirichlet problem for $P$ on the domain $M$.

\begin{Theorem}\label{thm:mainiso} Let $P$ be the elliptic $\Psi$DO of order $-1$ given by \eqref{eq:P}. For $s>-1$ the map $r_{M}P:H^{-1/2(s)}(M)\to H^{s+1}(M)$ is a homeomorphism.
Moreover, $N:d_{M}^{-1/2}C^{\infty}(M)\to C^{\infty}(M)$ is a bijection.
\end{Theorem}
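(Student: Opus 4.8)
The plan is to upgrade the forward mapping property already recorded (that $r_{M}P:H^{-1/2(s)}(M)\to H^{s+1}(M)$ is continuous) to a bijection, for each fixed $s>-1$, by the classical recipe: \emph{elliptic $\Rightarrow$ Fredholm}, then \emph{trivial kernel and trivial cokernel}; continuity of the inverse is then automatic from the open mapping theorem, $H^{-1/2(s)}(M)$ and $H^{s+1}(M)$ being Banach spaces. The statement about $N$ on smooth functions is obtained by letting $s\to\infty$: for $h\in C^\infty(M)=\bigcap_{s}H^{s+1}(M)$ there is, for each $s$, a unique $f^{(s)}\in H^{-1/2(s)}(M)$ with $r_{M}Pf^{(s)}=h$; uniqueness forces $f^{(s)}$ to be independent of $s$, hence to lie in $\bigcap_{s>-1}H^{-1/2(s)}(M)=\mathcal{E}_{-1/2}(M)=d_M^{-1/2}C^\infty(M)$ (a standard fact of the transmission calculus), and since $r_{M}P$ restricted to $\mathcal{E}_{-1/2}(M)$ is exactly $N$ — both being $r_{M}Pe_{M}$ applied to functions extended by zero, using $\varphi_{1}\equiv1$ near $M$ as in \eqref{eq:truncation} — this gives the asserted bijection $N:d_M^{-1/2}C^\infty(M)\to C^\infty(M)$.

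\emph{Fredholmness and vanishing index.} Since $P$ is elliptic of order $-1$ and, by Lemma \ref{lem:transmission}, satisfies the transmission condition of type $\mu=-1/2$, Grubb's $\mu$-transmission calculus \cite{Grubb2} supplies a parametrix $Q$ for $r_{M}P$ in the scale $H^{-1/2(s)}(M)\to H^{s+1}(M)$, with $Qr_{M}P=\mathrm{Id}+R_{1}$, $r_{M}PQ=\mathrm{Id}+R_{2}$ and $R_{1},R_{2}$ smoothing (hence compact) on the relevant spaces; this makes $r_{M}P$ Fredholm with $s$-independent index. To see the index is zero I would use that $P$ is \emph{self-adjoint} on $L^{2}(S)$ — it is the finite sum $\sum_{k}\varphi_{k}(I^{*}_{k}I_{k})(\varphi_{k}\,\cdot)$ of self-adjoint operators — together with the fact that $\mu=-1/2$ is the \emph{self-dual} exponent of the transmission scale: the $L^{2}(M)$-transpose of $r_{M}P:H^{-1/2(s)}(M)\to H^{s+1}(M)$ is again, after identifying the duals via Grubb's calculus, an operator of the same transmission type, namely $r_{M}Pe_{M}$ from $\dot H^{-s-1}(M)$ into a $(-1/2)$-transmission dual of $H^{-1/2(s)}(M)$. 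Hence $\mathrm{coker}(r_{M}P)$ is isomorphic to the kernel of an operator of the same shape, so the index vanishes, and once that kernel is shown trivial we get surjectivity for free. (Alternatively one can homotope $P$, through elliptic order $-1$ operators satisfying the $(-1/2)$-transmission condition, to a model such as $(1-\Delta_{S})^{-1/2}$, whose Dirichlet realisation is a homeomorphism of the transmission scale by Grubb's fractional-Laplacian results, and invoke homotopy invariance of the Fredholm index.)

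\emph{Triviality of kernel (and cokernel).} Let $f\in H^{-1/2(s)}(M)$ with $r_{M}Pf=0$. Elliptic regularity inside the transmission calculus (apply the parametrix: $f=-R_{1}f$ with $R_{1}$ smoothing) forces $f\in\bigcap_{t}H^{-1/2(t)}(M)=\mathcal{E}_{-1/2}(M)$, i.e. $f=d_M^{-1/2}\varphi$ with $\varphi\in C^\infty(M)$, and $r_{M}Pf=Nf$. I would then show $Nf=0$ implies $If:=I(d_M^{-1/2}\varphi)=0$, where $If\in L^{2}_\mu(\partial_{+}SM)$ by Theorem \ref{main0}b. Approximating $\varphi$ by $\varphi_{n}=\chi_{n}\varphi$ with cut-offs $\chi_{n}$ vanishing near $\partial M$ and increasing to $1$, one has $d_M^{-1/2}\varphi_{n}\in C_{c}(M^{\mathrm{int}})\subset L^{2}(M)$, with $d_M^{-1/2}\varphi_{n}\to f$ in $L^{p}(M)$ for every $p<2$ and $I(d_M^{-1/2}\varphi_{n})\rightharpoonup If$ weakly in $L^{2}_\mu(\partial_{+}SM)$ (uniform bound from Theorem \ref{main0}b, plus convergence geodesic-by-geodesic, legitimate because each geodesic carries finite $d_M^{-1/2}$-mass by strict convexity of $\partial M$); passing to the limit in $\langle N(d_M^{-1/2}\varphi_{n}),g\rangle_{L^{2}(M)}=\langle I(d_M^{-1/2}\varphi_{n}),Ig\rangle_{L^{2}_\mu(\partial_{+}SM)}$, $g\in C_{c}^\infty(M^{\mathrm{int}})$, yields $\langle If,Ig\rangle_{L^{2}_\mu(\partial_{+}SM)}=\langle Nf,g\rangle_{L^{2}(M)}=0$ for all such $g$. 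Since $If\in\overline{\mathrm{range}\,I}$ and $\{Ig:g\in C_{c}^\infty(M^{\mathrm{int}})\}$ is dense in $\overline{\mathrm{range}\,I}$, this gives $\|If\|_{L^{2}_\mu(\partial_{+}SM)}=0$. Now embed $M$ into the slightly larger simple manifold $M_{1}$: restricting geodesics of $M_{1}$ to $M$ shows $I_{1}(e_{M}f)=0$, hence $N_{1}(e_{M}f)=I_{1}^{*}I_{1}(e_{M}f)=0$ on $M_{1}^{\mathrm{int}}$; since $N_{1}$ is an elliptic $\Psi$DO of order $-1$ there, elliptic regularity (modulo smoothing, as $N_{1}$ is not properly supported) gives $e_{M}f\in C^\infty(M_{1}^{\mathrm{int}})$, so $f=d_M^{-1/2}\varphi\in C^\infty(M)$, vanishing to infinite order at $\partial M$; the assumed injectivity of $I$ on $C^\infty(M)$ forces $f=0$. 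The same argument applied to the $L^{2}(M)$-transpose of $r_{M}P$ — again of $(-1/2)$-transmission type by self-adjointness of $P$ — shows $\mathrm{coker}(r_{M}P)=0$; thus $r_{M}P$ is a continuous bijection, hence a homeomorphism.

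\emph{Main obstacle.} The two genuinely substantive points are the vanishing of the Fredholm index and the boundary bookkeeping in the kernel step. The latter is where the non-locality of the inverse and the boundary blow-up bite: the natural kernel elements live in $\mathcal{E}_{-1/2}(M)\not\subset L^{2}(M)$, so the clean Hilbert-space identity $\langle Nf,f\rangle=\|If\|^{2}_{L^{2}_\mu(\partial_{+}SM)}$ is unavailable, and one must route the argument through the a priori estimate of Theorem \ref{main0}b and an approximation respecting the $d_M^{-1/2}$ singularity — which is precisely why the theorem has to be phrased in $d_M^{-1/2}$-weighted (H\"ormander) spaces rather than ordinary Sobolev spaces.
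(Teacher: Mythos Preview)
Your overall strategy matches the paper's: invoke Grubb's Fredholm result for elliptic $\mu$-transmission operators (your parametrix discussion is a restatement of \cite[Theorem 4.4]{Grubb2}), then show kernel and cokernel are trivial. The cokernel argument via self-adjointness of $P$ is also essentially the paper's: it computes $(r_M P\iota)^{*}=r_M P$ for the inclusion $\iota:H^s_M(S)\hookrightarrow H^{-1/2(s)}(M)$, so any $u$ annihilating the range satisfies $r_M Pu=0$ and is dispatched by the kernel argument. Your separate index-zero discussion (homotopy to $(1-\Delta_S)^{-1/2}$, etc.) is therefore not needed.

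The one place you diverge is in proving $Nf=0\Rightarrow If=0$ for $f=d_M^{-1/2}\varphi\in\mathcal E_{-1/2}(M)$. You correctly flag that $f\notin L^2(M)$ blocks the naive identity $\langle Nf,f\rangle_{L^2(M)}=\|If\|^2$, and you route around it by cutoff approximation and weak limits. This works (modulo justifying $\langle N(d_M^{-1/2}\varphi_n),g\rangle\to\langle Nf,g\rangle$, which follows from self-adjointness of $P$ and $L^1$-convergence), but the paper's argument is shorter and conceptually cleaner: it introduces the weighted space $L^2(M,d_M^{1/2})$, observes that $f\in L^2(M,d_M^{1/2})$ since $|f|^2 d_M^{1/2}=d_M^{-1/2}|\varphi|^2\in L^1(M)$, and shows (Lemma \ref{lemma:weight}, which is precisely the content of the bound you borrow as Theorem \ref{main0}b) that $I:L^2(M,d_M^{1/2})\to L^2_\mu(\partial_+SM)$ is bounded with adjoint $I^*_w=d_M^{-1/2}I^*$. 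Then the Hilbert-space identity \emph{is} available in the weighted space:
\[
0=(d_M^{-1/2}Nf,f)_{L^2(M,d_M^{1/2})}=(I^*_wIf,f)_{L^2(M,d_M^{1/2})}=\|If\|^2_{L^2_\mu(\partial_+SM)}.
\]
So the ``main obstacle'' you highlight disappears once one works in $L^2(M,d_M^{1/2})$ rather than $L^2(M)$; no approximation is required. The remainder of your kernel argument (extend by zero to $M_1$, use ellipticity of $N_1$ to get interior smoothness, then injectivity of $I$ on $C^\infty(M)$) is identical to the paper's.
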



The outline of the remainder is as follows. Section \ref{section:S+H} contains details on Sobolev and H\"ormander spaces. Section \ref{sec:transmission} will be devoted to the proof of Lemma \ref{lem:transmission}, and Section \ref{sec:thmthm}  to the proof of Theorem \ref{thm:mainiso}, requiring a few technical lemmas, followed by the proof of Theorem \ref{main0}.






\subsection{Sobolev spaces and H\"ormander spaces} \label{section:S+H} 

In this section we summarize the main functional setting that we will be using.
Here we shall be concerned only with $L^{2}$-Sobolev and H\"ormander spaces. The Sobolev spaces are standard but the H\"ormander spaces are less so. For the latter we will follow \cite{Grubb2} and for the former \cite{McL,T} (with minor departures in notation). As before we let $(M,g)$ be a compact Riemannian manifold with boundary which we
think isometrically embedded into a closed manifold $(S,g)$.
We write $H^{s}(S)$ for the standard $L^2$ based Sobolev space of the closed manifold $S$. We denote
\[H^{s}(M)=r_{M}H^{s}(S)=\{u|_{M^{\text{int}}}:\;u\in H^{s}(S)\}\]
equipped with the quotient norm 
\begin{equation} \label{restnorm}
\norm{u}_{H^{s}(M)}:=\inf\{\norm{w}_{H^{s}(S)}:\;w\in H^{s}(S),\;r_{M}w=u\}.
\end{equation}
We denote
\[H^{s}_{M}(S):=\{u\in H^{s}(S):\;\text{supp}(u)\subset M\}.\]
\begin{Remark}{\rm The space $H_{M}^{s}(M)$ can also be seen as the closure of $C^{\infty}_{c}(M^{\text{int}})$ in $H^{s}(S)$. Finally we can also define
$H_{0}^{s}(M)$ as the closure of $C^{\infty}_{c}(M^{\text{int}})$ in $H^{s}(M)$. When $s\notin \mathbb{Z}+\frac{1}{2}$, there is a natural identification with $H^{s}_{M}(S)$. When $s$ is a nonnegative integer, $H^{s}(M)$ can be identified with the set of $u\in L^{2}(M)$ such that $Du\in L^{2}(M)$ for all differential operators $D$ of order $\leq s$ with coefficients in $C^{\infty}(M)$.}
\end{Remark}

One of the main inputs of \cite{Grubb2} is the introduction of particularly efficient {\it order reducing operators}, cf. \cite[Theorem 1.3]{Grubb2}. These are classical elliptic $\Psi$DOs on $S$ of order $\mu$ (denoted $\Lambda_{+}^{(\mu)}$) preserving support in $M$ and defining homeomorphisms
\begin{equation}
\Lambda_{+}^{(\mu)}:H^{s}_{M}(S)\to H^{s-\Re\mu}_{M}(S),
\label{eq:homeoL}
\end{equation}
where $\Re\mu$ denotes the real part of $\mu$.
These operators are used to define the H\"ormander spaces (also known as $\mu$-transmission spaces)
\[H^{\mu(s)}(M):=\Lambda_{+}^{(-\mu)}e_{M}H^{s-\Re\mu}(M),\;\;\;s>\Re\mu-1/2.\]
For $s>\Re\mu-1/2$, the maps $r_{M}\Lambda_{+}^{(\mu)}:H^{\mu(s)}(M)\to H^{s-\Re\mu}(M)$ are homeomorphisms with inverse
$\Lambda_{+}^{(-\mu)}e_{M}$ \cite[Proposition 1.7]{Grubb2}. We have a natural embedding $H_{M}^{s}(S)\subset H^{\mu(s)}(M)$. 

As explained in Section \ref{sec:roadmap}, these spaces are specifically adapted to the $\mu$-transmission condition \eqref{eq:mutransmission} and will provide natural spaces of solutions to the equation $Nf=h$ where $\mu=-1/2$.

The order reducing operators $\Lambda_{+}^{(\mu)}$ are used in conjunction with their adjoints $\Lambda_{-}^{(\bar{\mu})}$ by considering a new operator $Q=\Lambda_{-}^{(\mu-m)}P\Lambda_{+}^{(-\mu)}$, where $m$ is the order of $P$. The point is that if $P$ satisfies the $\mu$-transmission condition \eqref{eq:mutransmission}, then $Q$ satisfies the transmission condition \eqref{eq:transmission} with $\mu=0$ and fits the Boutet de Monvel calculus. This is the main idea in \cite{Grubb2}.

Let $d_{M}(x)$ be a $C^{\infty}(M)$-function equal to $dist(x,\partial M)$ near $\partial M$ and positive on the interior of $M$. For $\mu\in\mathbb {C}$ with $\Re \mu>-1$, let $\mathcal{E}_{\mu}(M)$ denote the space of functions $u$ such that $u=e_{M}d_{M}(x)^{\mu}\varphi$ with $\varphi\in C^{\infty}(M)$. One can show that \cite[Proposition 4.1]{Grubb2}:
\[\mathcal{E}_{\mu}(M)=\bigcap_{s}H^{\mu(s)}(M).\]

The spaces $H^{\mu(s)}(M)$ were introduced in \cite{Ho0} as the completion of $\mathcal{E}_{\mu}(M)$ in the topology defined by the seminorms $u\mapsto \norm{r_{M}Pu}_{H^{s-\Re m}(M)}$, where $P$ runs through the operators satisfying the $\mu$-transmission condition (see below) and any order $m\in\mathbb{C}$.  H\"ormander's starting point was the work of Vishik and Eskin \cite{VE,E}.

\subsection{Proof of Lemma \ref{lem:transmission}} \label{sec:transmission}



\begin{Definition} {\rm We shall say that $P$ has {\it even} symbol if $p_{j}(x,-\xi)=(-1)^j p_{j}(x,\xi)$ for all $j\geq 0$. It is easy to check that this condition is independent of the coordinates chosen. (Recall that the full symbol is not defined intrinsically.)}    
\end{Definition}

\begin{Lemma}\label{lem:even} The symbol of $N$ (or $N_1$) is even.
\end{Lemma}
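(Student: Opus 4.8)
The plan is to compute the full classical symbol of $N$ directly from an explicit formula for its Schwartz kernel near the diagonal, and to track parities. Since evenness of a symbol is a coordinate-free notion, it suffices to verify $p_j(x,-\xi)=(-1)^j p_j(x,\xi)$ in one local chart around an arbitrary interior point of $M$; the argument for $N_1$ will be verbatim the same, $N_1$ being the normal operator of the slightly larger simple manifold $M_1$. First I would rewrite \eqref{eq:N} using geodesic polar coordinates on $T_xM$: for $f$ supported near $x$,
\[
Nf(x) = 2\int_{T_xM} f(\exp_x w)\,|w|_{g(x)}^{-(d-1)}\, dV_{g(x)}(w),
\]
where $|\cdot|_{g(x)}$ and $dV_{g(x)}$ are the norm and Lebesgue measure on $T_xM$ induced by the inner product $g(x)$, and the integration is over $\exp_x^{-1}(M^{\mathrm{int}})$, a set containing a neighbourhood of $0$ (so the cutoff to it only changes $N$ by a smoothing operator and leaves the symbol untouched). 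The key observation is that the density $2|w|_{g(x)}^{-(d-1)}$ on $T_xM$ is \emph{even} in $w$ and homogeneous of degree exactly $-(d-1)$; all the subtlety lies in pushing it forward by the nonlinear map $\exp_x$.

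Next, fix a chart and write $\mathcal{E}_x(w)$ for the coordinate vector of $\exp_x(w)$ minus that of $x$; this is smooth in $(x,w)$ with $\mathcal{E}_x(0)=0$ and $D_w\mathcal{E}_x(0)=\mathrm{Id}$, hence invertible in $w$ near $0$ (globally so, by absence of conjugate points) with jointly smooth inverse $\mathcal{E}_x^{-1}$. Changing variables $w=\mathcal{E}_x^{-1}(y-x)$ shows that, near the diagonal, the Schwartz kernel of $N$ is
\[
K_N(x,y) = 2\, a(x,y)\, P(x,x-y)^{-(d-1)/2}, \qquad P(x,z):=\big|\mathcal{E}_x^{-1}(-z)\big|_{g(x)}^2,
\]
where $a(x,y)$ is jointly smooth and positive near the diagonal (it collects the Jacobian of $\mathcal{E}_x^{-1}$ and the ratio of Riemannian densities), and $P(x,z)$ is jointly smooth with $P(x,z)=g_{ij}(x)z^iz^j+O(|z|^3)$, $P(x,0)=0$; indeed $P(x,x-y)=d_g(x,y)^2$, which is smooth by simplicity. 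Consequently $\tilde K(x,z):=K_N(x,x-z)$ admits a polyhomogeneous expansion $\tilde K(x,z)\sim\sum_{j\ge 0}h_j(x,z)$, obtained by Taylor-expanding $a$ in $z$ and binomially expanding
\[
P(x,z)^{-(d-1)/2} = |z|_{g(x)}^{-(d-1)}\Big(1 + \tfrac{c_3(x,z)+c_4(x,z)+\cdots}{|z|_{g(x)}^2}\Big)^{-(d-1)/2},
\]
with $c_k(x,\cdot)$ the homogeneous degree-$k$ part of the Taylor series of $P(x,\cdot)$. Each $h_j(x,z)$ is then homogeneous of degree $-(d-1)+j$ in $z$, with coefficients smooth in $x$, and is a finite sum of terms $Q_m(x,z)\,|z|_{g(x)}^{-(d-1)-2n}$ with $Q_m$ a homogeneous polynomial of degree $m$ in $z$ and $m-2n=j$.

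The parity statement now follows by inspection. In such a term $m\equiv j\pmod 2$; the factor $|z|_{g(x)}^{-(d-1)-2n}$ is even in $z$, being a power of $|z|_{g(x)}^2=g_{ij}(x)z^iz^j$; and $Q_m(x,-z)=(-1)^m Q_m(x,z)$. Hence $h_j(x,-z)=(-1)^j h_j(x,z)$. Since the $j$-th symbol $p_j(x,\xi)$ is, up to a universal constant, the Fourier transform in $z$ of $h_j(x,z)$, and the Fourier transform intertwines $z\mapsto -z$ with $\xi\mapsto -\xi$, we conclude $p_j(x,-\xi)=(-1)^j p_j(x,\xi)$ for all $j$, i.e.\ the symbol of $N$ (and of $N_1$) is even. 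I expect the main obstacle to be exactly the bookkeeping of these last steps: the naive heuristic ``even leading singularity $\Rightarrow$ even symbol'' is false (the cubic term $c_3$ in $P$ is generically nonzero in arbitrary coordinates), and one must see that the binomial expansion introduces only \emph{even} powers of $|z|_{g(x)}^2$ in the denominators, so that within each homogeneity level the polynomial numerator carries precisely the parity $(-1)^j$. One also has to keep track, as noted above, that the domain cutoff and the off-diagonal part of the kernel are smoothing and therefore irrelevant to the symbol.
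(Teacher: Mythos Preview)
Your proof is correct. The key points---writing the kernel as $2a(x,y)\,d_g(x,y)^{-(d-1)}$ near the diagonal with $a$ smooth, Taylor/binomially expanding to get $h_j(x,z)$ of the form $Q_m(x,z)\,|z|_{g(x)}^{-(d-1)-2n}$ with $m-2n=j$, and reading off the parity $(-1)^m=(-1)^j$ before passing to the symbol by Fourier transform---all go through as you describe.

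This is, however, a genuinely different route from the paper's. The paper does not expand the kernel by hand; instead it invokes any of three external computations of the full symbol of $N$: (i) the geometric symbol calculus of \cite{SSU,Sh}, (ii) the amplitude formula $M(x,y,\xi)=\int e^{-i\xi\cdot z}(G^{(1)}z\cdot z)^{(-n+1)/2}\,\frac{|\det G^{(3)}|}{\sqrt{\det g}}\,dz$ from \cite{SU}, noting $M$ is even in $\xi$ so $p_j=\sum_{|\alpha|=j}\frac{1}{\alpha!}\partial_\xi^\alpha D_y^\alpha M|_{y=x}$ has parity $(-1)^{|\alpha|}$, and (iii) the general lemma \cite[Lemma~B.1]{DPSU}, which for operators of the form $Af(x)=\int_{S_xU_1}\int_{\mathbb R}A(x,r,w)f(x+rw)\,dr\,dS_x(w)$ gives $p_k(x,\xi)=2\pi\frac{i^k}{k!}\int_{S_xU_1}\partial_r^kA(x,0,w)\,\delta^{(k)}(w\cdot\xi)\,dS_x(w)$, from which evenness is immediate since $\delta$ is even. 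Your argument is closest in spirit to (ii): you are effectively rederiving the amplitude and its $\xi$-parity from the explicit kernel, rather than quoting it. What the paper's route buys is brevity and an off-the-shelf extension to the attenuated transform $I_a$ (since \cite{DPSU} covers that case directly). What your route buys is self-containment: you do not need any of \cite{SSU,Sh,SU,DPSU}, and the mechanism by which the odd Taylor coefficients $c_3,c_5,\dots$ of $d_g(x,y)^2$ fail to spoil evenness (they enter only through even powers $|z|_{g(x)}^{-2n}$ in the denominators) is made explicit. Your argument also adapts to $I_a$ with no change, since the attenuation weight is smooth in $(x,w)$ and is absorbed into the smooth factor $a(x,y)$.
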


\begin{proof} There are (at least) three possible proofs of this lemma.  As explained in \cite{SSU} the full {\it geometric} symbol of $N$ coincides with its principal symbol $c_{d}|\xi|^{-1}$. In \cite{Sh} a relation is established between the full geometric symbol and the ordinary full symbol in local coordinates. See for example equations (1.6) and (1.7) in \cite{Sh}.
An inspection of those formulas shows that the symbol is even starting from the fact that $c_{d}|\xi|^{-1}$ is even.
Another more natural proof was suggested to us by Gunther Uhlmann and is based on the calculation of the full symbol in \cite{SU}.
Equation (17) in \cite{SU} gives an explicit formula for the amplitude $M(x,y,\xi)$ of the $\Psi$DO for the case of 2-tensors. In the case of functions the formula is 
\[M(x,y,\xi)=\int e^{-i\xi\cdot z}(G^{(1)}z\cdot z)^{\frac{-n+1}{2}}\frac{|\det G^{(3)}|}{\sqrt{\det g}}\,dz,\]
where $G^{1}(x,y)$ and $G^{3}(x,y)$ are defined in \cite[Lemma 3]{SU}, but we do not need to know what they are. The terms $p_{j}(x,\xi)$ may be derived from the amplitude by
\[p_{j}(x,\xi)=\sum_{|\alpha|=j}\frac{1}{\alpha !}\partial_{\xi}^{\alpha}D^{\alpha}_{y} M(x,y,\xi)|_{y=x}.\]
Since $M(x,y,\xi)$ is even in $\xi$, we see that once we start taking derivatives in $\xi$,  the parity of $p_{j}$ in $\xi$ changes according to $(-1)^{j}=(-1)^{|\alpha|}$.

The quickest way is perhaps to use \cite[Lemma B.1]{DPSU} which covers a broad range of operators for the
form 
\[Af(x)=\int_{S_{x}U_{1}}\int_{\mathbb R} A(x,r,w)f(x+rw)\,dr\,dS_{x}(w).\]
Our operator $N$ is certainly of this form (after some change of variables).  The lemma proves that $A$ is a classical $\Psi$DO of order $-1$ and computes explicity the full symbol deriving a formula
\[p_{k}(x,\xi)=2\pi\frac{i^k}{k!}\int_{S_{x}U_{1}}\partial^{k}_{r}A(x,0,w)\delta^{(k)}(w\cdot\xi)\,dS_{x}(w).\]
From this formula we see right away that $p_{k}(x,-\xi)=(-1)^{k}p(x,\xi)$ since the delta function $\delta$ is even.
\end{proof}

\begin{proof}[Proof of Lemma \ref{lem:transmission}] In a tubular neighbourhood of $\partial M$, the full symbol of $P$ coincides with that of $N_1$. The result is then a direct consequence of Lemma \ref{lem:even} and the fact that $m=-1$.
\end{proof}

\subsection{Proof of Theorems \ref{thm:mainiso} and \ref{main0}}\label{sec:thmthm}

To prove Theorem \ref{thm:mainiso}, a first step is to prove that for $P$ defined in \eqref{eq:P}, $r_M P$ is a Fredholm operator in the functional settings $H^{-1/2(s)}(M)\to H^{s+1}(M)$ for $s>-1$, and $\mathcal{E}_{-1/2}(M)\to C^{\infty}(M)$. This is mainly due to the ellipticity of $P$, and one additional concept from \cite{Grubb2}, {\it the factorization index $\mu_0$}. This is defined for elliptic operators of order $m$ as
\[\mu_{0}:=m/2+(a_{+}-a_{-})/2\pi i\]
where $\exp a_{\pm}=p_{0}(x,\mp\nu(x))$. For the case of $N_{1}$, we have $\mu_0=-1/2$ since the principal symbol of $N_{1}$ is (up to a constant)
$|\xi|^{-1}$, cf. \cite[Example 3.2]{Grubb2}. 
In general, for elliptic operators of order $m$ with even symbol, $\mu=\mu_{0}=m/2$. Then as a direct consequence of \cite[Theorem 4.4]{Grubb2}, we obtain: 

\begin{Theorem}\label{theorem:fred}
    Assume $s>-1$.  Suppose $u\in H_{M}^{\sigma}(S)$ for some $\sigma>-1$ and let $P$ be the elliptic $\Psi$DO of order $-1$ given by \eqref{eq:P}. If $r_{M}Pu\in H^{s+1}(M)$, then $u\in H^{-1/2(s)}(M)$. Moreover, the mapping $r_{M}P: H^{-1/2(s)}(M)\to H^{s+1}(M)$ is Fredholm. In particular, if $r_{M}Pu\in C^{\infty}(M)$, then $u\in \mathcal{E}_{-1/2}(M)$. The mapping $r_{M}P:  \mathcal{E}_{-1/2}(M)\to C^{\infty}(M)$ is also Fredholm. 
\end{Theorem}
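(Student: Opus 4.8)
The plan is to deduce Theorem \ref{theorem:fred} directly from Grubb's general regularity-and-Fredholm theorem for elliptic $\Psi$DOs satisfying a $\mu$-transmission condition, \cite[Theorem 4.4]{Grubb2}, once the hypotheses of that theorem have been checked for $P$. First I would assemble the three structural facts about $P$ that the theorem needs. (1) $P$ is a classical elliptic $\Psi$DO of order $m=-1$ on the closed manifold $S$ with principal symbol a constant multiple of $|\xi|^{-1}$: this was already recorded in Section \ref{sec:roadmap}, since each $\varphi_k(I_k^*I_k)\varphi_k$ is elliptic of order $-1$ with that principal symbol and the relation $\sum_k\varphi_k^2=1$ preserves both ellipticity and the principal symbol of the sum in \eqref{eq:P}. (2) $P$ satisfies the $(-1/2)$-transmission condition with respect to $\partial M$, which is exactly Lemma \ref{lem:transmission}; note here that in a collar neighbourhood of $\partial M$ only the $k=1$ term of \eqref{eq:P} contributes (the $\varphi_k$ with $k\ge2$ vanish near $M$) and $\varphi_1\equiv1$ there, so the full symbol of $P$ coincides with that of $N_1$ exactly where the transmission condition is tested. (3) The factorization index of $P$ equals $\mu_0=m/2=-1/2$: the symbol is even by Lemma \ref{lem:even}, hence $p_0(x,\nu(x))=p_0(x,-\nu(x))$, so $a_+=a_-$ and $\mu_0=m/2+(a_+-a_-)/2\pi i=-1/2$, cf.\ \cite[Example 3.2]{Grubb2}. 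Thus the choice $\mu=\mu_0=-1/2$ is admissible, the relevant range of exponents is $s>\Re\mu-1/2=-1$ (matching the statement), and the baseline hypothesis $u\in H^\sigma_M(S)$ with $\sigma>-1$ places $u$ a priori in the $\mu$-transmission space $H^{-1/2(\sigma)}(M)$ via the embedding $H^\sigma_M(S)\subset H^{-1/2(\sigma)}(M)$.

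With these checks in hand, \cite[Theorem 4.4]{Grubb2} --- whose proof proceeds by conjugating $P$ with the order-reducing operators $\Lambda_\pm$ to the $\mu=0$ Boutet de Monvel situation, but which I would simply invoke --- yields at once the two Sobolev-scale assertions: the elliptic regularity property that $u\in H^\sigma_M(S)$ ($\sigma>-1$) together with $r_MPu\in H^{s+1}(M)$ forces $u\in H^{-1/2(s)}(M)$, and the Fredholm property of $r_MP:H^{-1/2(s)}(M)\to H^{s+1}(M)$ for every $s>-1$. The $C^\infty$ assertions then follow by intersecting over $s$: if $r_MPu\in C^\infty(M)\subset\bigcap_{s>-1}H^{s+1}(M)$, the regularity property gives $u\in\bigcap_{s>-1}H^{-1/2(s)}(M)=\mathcal{E}_{-1/2}(M)$ by \cite[Proposition 4.1]{Grubb2}; and $r_MP:\mathcal{E}_{-1/2}(M)\to C^\infty(M)$ is Fredholm because its kernel coincides with the finite-dimensional, $s$-independent kernel of $r_MP$ on $H^{-1/2(s)}(M)$ (any kernel element $u$ has $r_MPu=0\in C^\infty(M)$, hence lies in $\mathcal{E}_{-1/2}(M)$), while a finite-dimensional complement to the range of $r_MP$ in $H^{s+1}(M)$ can be chosen to consist of smooth functions (e.g.\ spanned by representatives of the cokernel, which are smooth by ellipticity of the adjoint problem) and is, again by the regularity property, a complement in $C^\infty(M)$ as well.

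The only real point requiring care is the bookkeeping rather than any estimate: one has to make sure that $P$, being a localized sum and not a single canonical operator, genuinely satisfies the $\mu$-transmission condition at every boundary point (handled by the collar observation above, which reduces the check to $N_1$), and one has to confirm that the various admissibility conditions on $\mu$, $s$ and $\sigma$ appearing in \cite[Theorem 4.4]{Grubb2} collapse, for $m=-1$ and $\mu=\mu_0=-1/2$, precisely to the stated hypotheses $s>-1$ and $\sigma>-1$. All the substantive work has already been absorbed into Lemmas \ref{lem:even} and \ref{lem:transmission} and into the computation of the factorization index, so no new analysis is needed beyond invoking Grubb's theorem.
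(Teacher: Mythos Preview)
Your proposal is correct and matches the paper's own approach: the paper states Theorem \ref{theorem:fred} as a direct consequence of \cite[Theorem 4.4]{Grubb2}, after having recorded that $P$ is elliptic of order $-1$, satisfies the $(-1/2)$-transmission condition (Lemma \ref{lem:transmission}), and has factorization index $\mu_0=-1/2$. You have simply made the verification of Grubb's hypotheses and the passage to the $C^\infty$ statements more explicit than the paper does, but there is no difference in method.
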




Before proving Theorem \ref{thm:mainiso}, we state and prove a few preliminary lemmas. We begin with:

\begin{Lemma}\label{lem:keyfact}
    If $(M,g)$ is a non-trapping manifold with strictly convex boundary, there exists a constant $C_0>0$ such that 
    \begin{align*}
	d_{M}(\gamma_{x,v}(t),\partial M) \ge C_0\ t(\tau(x,v)-t),\;\text{for all}\; (x,v)\in \partial_+ SM, \;\; t\in [0,\tau(x,v)]. 
    \end{align*}    
\end{Lemma}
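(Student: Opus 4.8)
The plan is to exploit strict convexity of $\partial M$ to get a uniform lower bound on how quickly a geodesic penetrates into the interior after entering, and a symmetric bound on how quickly it approaches the boundary before exiting. Fix $(x,v) \in \partial_+ SM$ and write $\gamma = \gamma_{x,v}$, $\tau = \tau(x,v)$. Consider the function $\rho(t) = d_M(\gamma(t))$ where, near $\partial M$, $d_M = \mathrm{dist}(\cdot,\partial M)$; since $\gamma(0),\gamma(\tau) \in \partial M$ we have $\rho(0) = \rho(\tau) = 0$. The first step is to compute $\rho'(0)$ and $\rho''(0)$: differentiating the distance function along the geodesic gives $\rho'(0) = \langle \nabla d_M(x), v\rangle = -\langle \nu(x), v\rangle \ge 0$ (inward-pointing), and $\rho''(0) = \mathrm{Hess}\, d_M(v,v) = -\mathrm{I\!I}_x(v^\top, v^\top)$ where $v^\top$ is the tangential part of $v$; here I use that the Hessian of the boundary distance function at $\partial M$ is (minus) the second fundamental form on tangential directions. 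Strict convexity, $\mathrm{I\!I}_x > 0$, is exactly what makes the geodesic curve back toward $\partial M$, and it also handles the glancing case: if $v$ is (nearly) tangential so that $\rho'(0) \approx 0$, then $\rho''(0)$ is bounded away from $0$, forcing $\rho$ to grow quadratically off $t=0$.

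The second step is to make this quantitative and uniform. By compactness of $\partial_+ SM$ and smoothness of all the data, there is a $\delta_0 > 0$ and a constant $c_1 > 0$ such that for every $(x,v) \in \partial_+ SM$ and $t \in [0,\delta_0]$ one has $\rho(t) \ge c_1 t(\,\rho'(0) + t\,) \ge c_1 t$ if $\rho'(0)$ is not small, and $\rho(t) \ge c_1 t^2$ uniformly in the glancing regime — in either case $\rho(t) \ge c_1' \min(t, \delta_0)\cdot t / \delta_0$ type bounds hold; cleaner is to just establish $\rho(t) \ge c_2\, t(\tau-t)$ on $[0,\delta_0]$ and, by the time-reversed geodesic (which enters at $\gamma(\tau)$ with inward velocity $-\dot\gamma(\tau)$, again a point of $\partial_+ SM$), the same on $[\tau - \delta_0, \tau]$. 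For the middle range $t \in [\delta_0, \tau - \delta_0]$ (nonempty only when $\tau \ge 2\delta_0$), the geodesic segment $\gamma([\delta_0, \tau-\delta_0])$ stays in the compact set $\{d_M \ge m_0\}$ for some $m_0 > 0$ — this uses that $\gamma$ restricted to this sub-interval is bounded away from $\partial M$, which itself follows from continuity of $(x,v,t)\mapsto \gamma_{x,v}(t)$ and a compactness argument over $\partial_+ SM$, together with the fact that $\tau$ is continuous (strict convexity) hence bounded above by some $\tau_{\max}$. On that middle range, $t(\tau-t) \le \tau_{\max}^2/4$, so $\rho(t) \ge m_0 \ge (m_0/\tau_{\max}^2)\cdot 4 \cdot t(\tau-t)/4$ gives the bound with $C_0 = 4m_0/\tau_{\max}^2$. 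Taking $C_0$ to be the minimum of the constants from the three regimes finishes the proof.

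The main obstacle I expect is the \emph{glancing/short-geodesic regime}: geodesics that are nearly tangent to $\partial M$ at $x$, for which $\tau(x,v)$ is small and $\rho'(0) \approx 0$. There the linear-in-$t$ heuristic fails and one genuinely needs the second-order (convexity) term, and one must check that the constant $c_1$ in $\rho(t) \gtrsim t\tau$ does not degenerate as $v$ becomes tangential. The clean way to handle it is to parametrize by the angle $\theta$ between $v$ and the tangent plane, note $\rho'(0) = \sin\theta \cdot(\text{something bounded})$ and $\tau(x,v) \asymp \sin\theta$ in this regime (again by strict convexity — a nearly-tangent geodesic exits quickly, with exit time comparable to the entry angle), so that both $\rho(t)$ and $t(\tau - t)$ are of the same order $\theta^2$ uniformly; the $C^\infty$ dependence of $\tau$ and $\gamma$ on $(x,v)$ away from $\partial_0 SM$, plus a careful expansion near the glancing set, makes this rigorous. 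Everything else (the interior regime, the time-reversal symmetry argument for the $t \to \tau$ end) is routine compactness.
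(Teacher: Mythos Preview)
Your plan is sound and rests on the same key fact the paper uses: strict convexity gives $\tau(x,v)\le C_2\,|\langle\nu(x),v\rangle_g|$ (this is Sharafutdinov's Lemma~4.1.2), which you rediscover as ``$\tau(x,v)\asymp\sin\theta$'' in the glancing regime. The paper's execution, however, is considerably more direct than your three-regime decomposition: it considers the ratio $F(x,v,t)=d_M(\gamma_{x,v}(t),\partial M)/[t(\tau(x,v)-t)]$ on $G=\{0<t<\tau(x,v)\}$ and shows it is bounded below. By l'H\^opital, $\lim_{t\to 0^+}F=|\langle\nu(x),v\rangle|/\tau(x,v)\ge 1/C_2$; the antipodal scattering relation (your ``time-reversal'') transports this to $t\to\tau^-$; and a compactness argument finishes, since $F>0$ in the interior. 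This sidesteps the Taylor-expansion case analysis entirely.

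One point in your write-up is genuinely wrong and would cause trouble if followed literally: you correctly compute $\rho''(0)=-\Pi_x(v^\top,v^\top)\le 0$, but then conclude that $\rho$ ``grows quadratically off $t=0$'' in the glancing case. It does not. The correct picture is that $\rho(t)\approx \rho'(0)\,t-\tfrac12\Pi_x(v^\top,v^\top)\,t^2$ is a \emph{downward} parabola vanishing again at $t\approx 2\rho'(0)/\Pi_x(v^\top,v^\top)\approx\tau$; factoring gives $\rho(t)\approx \tfrac12\Pi_x(v^\top,v^\top)\,t(\tau-t)$, which is exactly the desired bound with a positive constant bounded below by strict convexity. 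For the same sign reason your intermediate claim $\rho(t)\ge c_1\,t(\rho'(0)+t)$ on a fixed interval $[0,\delta_0]$ is false whenever $\tau(x,v)<\delta_0$ (take $t=\tau$: the left side vanishes, the right side does not). You do recover the correct scaling in your final paragraph, so the overall strategy survives --- but the clean route is the ratio-plus-l'H\^opital argument above, which handles the glancing regime automatically rather than as a separate case.
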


\begin{proof} In what follows, denote $\alpha_A:\partial_+ SM\to \partial_+ SM$ the {\em antipodal scattering relation}, that is, the map $(x,v)\mapsto (\gamma_{x,v}(\tau(x,v)), -\dot\gamma_{x,v}(\tau(x,v)))$. $\alpha_A$ satisfies $\alpha_A^2 = Id$ and if $g(\nu_x,v)=0$, then $\alpha_A (x,v) = (x,-v)$.

 It is enough to show that the nonnegative function $F(x,v,t) := \frac{d_{M}(\gamma_{x,v}(t),\partial M)}{t(\tau(x,v)-t)}$ is uniformly bounded away from zero on the set 
    \begin{align*}
	G = \{ (x,v)\in \partial_+ SM, \quad t\in (0,\tau(x,v))\}.
    \end{align*}    
    In what follows, we will also use \cite[Lemma 4.1.2 p113]{Sharafudtinov1994} stating that there exists $C_2>0$ such that 
    \begin{align}
	\tau(x,v) \le C_2 |\langle \nu(x),v\rangle_{g}|, \qquad (x,v)\in \partial_+ SM, \quad \langle\nu(x),v\rangle_{g} \neq 0,
	\label{eq:C2}
    \end{align}
    where $\nu(x)$ is the outer unit normal at $x\in \partial M$. This is essentially a consequence of the strict convexity of the boundary of $M$.
    Using that $d_{M}(\gamma_{x,v}(t),\partial M)|_{t=0} = 0$ and $\frac{d}{dt} d_{M}(\gamma_{x,v}(t),\partial M)|_{t=0} = |\langle \nu(x),v\rangle_{g} |$ (e.g., by using normal geodesic coordinates), l'H\^opital's rule implies
    \begin{align*}
	\lim_{t\to 0^+} F(x,v,t) = \frac{|\langle \nu(x),v\rangle_{g}|}{\tau(x,v)} = \frac{\mu(x,v)}{\tau(x,v)} \ge \frac{1}{C_2}.
    \end{align*}
    Moreover, since $\gamma_{x,v}(t) = \gamma_{\alpha_A(x,v)}(\tau(x,v)-t)$, we have the symmetry property $F(x,v,t) = F(\alpha_A(x,v), \tau(x,v)-t)$, and this allows to deduce the limit
    \begin{align*}
	\lim_{t\to \tau(x,v)^-} F(x,v,t) = \frac{\mu (\alpha_A(x,v))}{\tau(\alpha_A(x,v))} \ge \frac{1}{C_2}.
    \end{align*}
    By compactness, the result follows since $F$ is uniformly bounded away from zero outside any neighborhood of $\{t=0\}\cup \{t=\tau(x,v)\}$ in $G$.     
\end{proof}

An important ingredient in what follows is the consideration of the following weighted space $L^{2}(M,d_{M}^{1/2})$ where the measure is $d_{M}^{1/2}dx$. 
Recall that $I^*$ denotes the usual backprojection, i.e. the adjoint of $I:L^2(M)\to L^2_\mu (\partial_+ SM)$.

\begin{Lemma} \label{lemma:weight} The following hold: 
  \begin{itemize}
    \item[$(i)$] The map $I:L^{2}(M,d_{M}^{1/2})\to L_{\mu}^{2}(\partial_{+} SM)$ is bounded with adjoint $I^*_{w}=d_{M}^{-1/2}I^{*}$. 
    \item[$(ii)$] The map $I^*: L^2_\mu(\partial_+ SM) \to L^{2}(M,d_{M}^{-1/2})$ is bounded. 
  \end{itemize}
\end{Lemma}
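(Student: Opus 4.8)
The real content is the boundedness of $I$ on the weighted space in $(i)$; once that is in hand, the adjoint formula in $(i)$ and the whole of $(ii)$ drop out by a Riesz-representation argument, so I would organise the proof in that order.

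For the boundedness of $I:L^{2}(M,d_{M}^{1/2})\to L^2_\mu(\partial_+SM)$ I plan a weighted Cauchy--Schwarz estimate along geodesics followed by the Santal\'o-type change of variables \eqref{eq:justmu}. Writing $\delta_{x,v}(t):=d_M(\gamma_{x,v}(t),\partial M)$ and splitting $If(x,v)=\int_0^{\tau(x,v)}\big(f(\gamma_{x,v}(t))\,\delta_{x,v}(t)^{1/4}\big)\,\delta_{x,v}(t)^{-1/4}\,dt$, Cauchy--Schwarz gives
\[
|If(x,v)|^2\le\Big(\int_0^{\tau(x,v)}|f(\gamma_{x,v}(t))|^2\,\delta_{x,v}(t)^{1/2}\,dt\Big)\Big(\int_0^{\tau(x,v)}\delta_{x,v}(t)^{-1/2}\,dt\Big).
\]
Here I would invoke Lemma \ref{lem:keyfact}, namely $\delta_{x,v}(t)\ge C_0\,t(\tau(x,v)-t)$, to bound the second factor by $C_0^{-1/2}\int_0^{\tau(x,v)}(t(\tau(x,v)-t))^{-1/2}\,dt=\pi\,C_0^{-1/2}$, the value of this integral being independent of $\tau(x,v)$ (substitute $t=\tau u$). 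Integrating the displayed inequality against $d\mu$ over $\partial_+SM$ and using \eqref{eq:justmu} in the form $\int_{\partial_+SM}\int_0^{\tau(x,v)}G(\gamma_{x,v}(t))\,dt\,d\mu(x,v)=c\int_M G\,dx$, valid for $G\in C(M)$ with $c>0$ a dimensional constant coming from the fibre integration over $S_xM$, applied to $G=|f|^2 d_M^{1/2}$, then yields
\[
\|If\|^2_{L^2_\mu(\partial_+SM)}\le \pi\,C_0^{-1/2}\,c\,\|f\|^2_{L^{2}(M,d_{M}^{1/2})},
\]
first for $f\in C^\infty(M)$ and then, by density, for all $f\in L^{2}(M,d_{M}^{1/2})$.

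Next I would deduce the adjoint formula in $(i)$ and part $(ii)$ together. Fix $w\in L^2_\mu(\partial_+SM)$; by the estimate above the functional $f\mapsto\langle If,w\rangle_{L^2_\mu(\partial_+SM)}$ is bounded on $L^{2}(M,d_{M}^{1/2})$ with norm at most $\|I\|\,\|w\|_{L^2_\mu(\partial_+SM)}$, so it admits a Riesz representer $r_w\in L^{2}(M,d_{M}^{1/2})$. Since $\int_M f\,r_w\,d_M^{1/2}\,dx=\langle If,w\rangle_{L^2_\mu(\partial_+SM)}=\langle f,I^*w\rangle_{L^2(M)}=\int_M f\,I^*w\,dx$ for every $f$, we obtain $r_w\,d_M^{1/2}=I^*w$ a.e., i.e. $r_w=d_M^{-1/2}I^*w$, which is exactly the identity $I^*_{w}=d_{M}^{-1/2}I^{*}$ asserted in $(i)$. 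Finally $\|I^*w\|^2_{L^2(M,d_M^{-1/2})}=\int_M|I^*w|^2 d_M^{-1/2}\,dx=\int_M|d_M^{-1/2}I^*w|^2 d_M^{1/2}\,dx=\|r_w\|^2_{L^{2}(M,d_{M}^{1/2})}\le\|I\|^2\,\|w\|^2_{L^2_\mu(\partial_+SM)}$, which is the boundedness asserted in $(ii)$.

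The only non-formal ingredient is the geodesic estimate of Lemma \ref{lem:keyfact} together with the elementary integrability of $(t(\tau-t))^{-1/2}$; granting those, the key bound in $(i)$ is immediate and the remaining assertions are routine bookkeeping with adjoints and the change of variables \eqref{eq:justmu}, so I do not foresee a genuine obstacle here — the substantive work is really packaged into Lemma \ref{lem:keyfact}.
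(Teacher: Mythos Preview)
Your proposal is correct and follows essentially the same route as the paper: the weighted Cauchy--Schwarz split along each geodesic (your $f\,\delta^{1/4}\cdot\delta^{-1/4}$ is exactly the paper's substitution $f=d_M^{-1/4}g$), the bound on $\int_0^\tau\delta^{-1/2}dt$ via Lemma~\ref{lem:keyfact} and the Beta integral, and the Santal\'o change of variables are all identical in substance. The only cosmetic difference is that the paper works directly with $f\in L^2(M,d_M^{1/2})$ (since Santal\'o holds for nonnegative measurable integrands) rather than first taking $f\in C^\infty(M)$ and passing to the limit; and for $(ii)$ the paper phrases your Riesz computation as the factorisation $I^*=d_M^{1/2}\cdot I_w^*$ with the second factor an isometry $L^2(M,d_M^{1/2})\to L^2(M,d_M^{-1/2})$.
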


\begin{proof} Let $f\in L^2(M, d_{M}^{1/2})$ and write $f = d_{M}^{-1/4} g$ for some $g\in L^2(M)$. We write
    \begin{align*}
	I f(x,v) &= \int_0^{\tau(x,v)} g(\gamma_{x,v}(t)) \frac{dt}{d_{M}(\gamma_{x,v}(t),\partial M)^{1/4}}, \\
	|I f(x,v)|^2 &\le \int_0^{\tau(x,v)} |g(\gamma_{x,v}(t))|^2\ dt\ \int_{0}^{\tau(x,v)} \frac{dt}{d_{M}(\gamma_{x,v}(t),\partial M)^{1/2}},
    \end{align*}
    using Cauchy-Schwarz inequality. Using Lemma \ref{lem:keyfact}, we have, for any $(x,v)\in \partial_+ SM$,
    \begin{align*}
	\int_{0}^{\tau(x,v)} \!\!\!\! \frac{dt}{d_{M}(\gamma_{x,v}(t),\partial M)^{1/2}} \le \frac{1}{C_{0}^{1/2}} &\int_0^{\tau(x,v)} \!\!\!\! \frac{dt}{(t(\tau-t))^{\frac{1}{2}}} \stackrel{t=\tau u}{=} \frac{1}{C_0^{1/2}} \int_0^1 \frac{du}{(u(1-u))^{\frac{1}{2}}} \\= &\frac{\pi}{C_0^{1/2}}. 
    \end{align*}
    Integrating over $\partial_+ SM$, we then obtain
    \begin{align*}
	\int_{\partial_+ SM} |I f(x,v)|^2\ d\mu &\le \frac{\pi}{C_0^{1/2}} \int_{\partial_+ SM} \int_0^{\tau(x,v)} |g(\gamma_{x,v}(t))|^2\ dt\ d\mu \\
	&= \frac{\pi}{C_0^{1/2}} \int_{SM} |g(x)|^2\,dxdv \qquad (\text{by Santal\'o's formula \cite{Sharafudtinov1994}}) \\
	&= \frac{\pi \Vol(S^{d-1})}{C_0^{1/2}} \int_M |g(x)|^2 \ dx \\
	&= \frac{\pi\Vol(S^{d-1})}{C_0^{1/2}} \|f\|_{L^2(M,d_{M}^{1/2})}^2,
    \end{align*}
    hence $(i)$ holds. Then $(ii)$ is a direct consequence of the factorization
    \begin{align*}
      I^*: L^2_\mu (\partial_+ SM) \stackrel{I_w^*}{\longrightarrow} L^2(M, d_{M}^{1/2}) \stackrel{d_{M}^{1/2}\cdot}{\longrightarrow} L^2(M, d_{M}^{-1/2}),
    \end{align*}
    where $I_w^*$ is continuous by $(i)$ and the second operator (multiplication by $d_{M}^{1/2}$) is an isometry in the setting above. The proof of Lemma \ref{lemma:weight} is complete.  
\end{proof}

\begin{Lemma}\label{lem:phiL1}
    Given $\varphi\in C^{\infty}(M)$, $d_{M}^{-1/2}\varphi\in L^{2}(M,d_{M}^{1/2})\cap L^{1}(M)$.
\end{Lemma}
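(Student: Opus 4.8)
The plan is to reduce both memberships to the single fact that $d_M^{-1/2}\in L^1(M)$. Since $M$ is compact and $\varphi\in C^\infty(M)$, the function $\varphi$ is bounded; put $C_\varphi:=\|\varphi\|_\infty<\infty$. Then $|d_M^{-1/2}\varphi|\le C_\varphi\, d_M^{-1/2}$ pointwise, so $d_M^{-1/2}\varphi\in L^1(M)$ as soon as $d_M^{-1/2}\in L^1(M)$. For the weighted $L^2$-norm, $\int_M |d_M^{-1/2}\varphi|^2\, d_M^{1/2}\,dx=\int_M d_M^{-1/2}|\varphi|^2\,dx\le C_\varphi^2\int_M d_M^{-1/2}\,dx$, so the same bound also gives $d_M^{-1/2}\varphi\in L^2(M,d_M^{1/2})$. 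Thus everything comes down to showing $\int_M d_M^{-1/2}\,dx<\infty$.

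To estimate $\int_M d_M^{-1/2}\,dx$, fix $\delta>0$ small enough that $d_M=\mathrm{dist}(\cdot,\partial M)$ on the collar $U_\delta:=\{x\in M:\;d_M(x)<\delta\}$ and that the normal exponential map $\Phi:\partial M\times[0,\delta)\to \overline{U_\delta}$, $\Phi(y,r):=\gamma_{y,-\nu(y)}(r)$, is a diffeomorphism (possible by compactness of $\partial M$); set $K:=M\setminus U_\delta$. On $K$ we have $d_M\ge\delta$, hence $d_M^{-1/2}\le\delta^{-1/2}$ and $\int_K d_M^{-1/2}\,dx\le \delta^{-1/2}\Vol(K)<\infty$. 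On $U_\delta$, in the coordinates $(y,r)$ the Riemannian volume is $J(y,r)\,dr\,dS_g(y)$ with $J$ continuous, hence bounded, on the compact set $\partial M\times[0,\delta]$, while $d_M\circ\Phi(y,r)=r$; therefore
\[
\int_{U_\delta} d_M^{-1/2}\,dx\le \Big(\sup J\Big)\,\Vol(\partial M)\int_0^\delta r^{-1/2}\,dr=2\Big(\sup J\Big)\,\Vol(\partial M)\sqrt\delta<\infty,
\]
using $-1/2>-1$. Adding the two contributions yields $d_M^{-1/2}\in L^1(M)$, and with it the lemma.

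There is no real obstacle here; the only delicate point is the behaviour near $\partial M$, which is controlled by the elementary convergence of $\int_0^\delta r^{-1/2}\,dr$ -- the exponent being exactly at the threshold $\mu=-1/2>-1$ that recurs throughout Section \ref{pxray} -- together with the boundedness of the Jacobian $J$ of the collar coordinates. One could equally invoke the coarea formula for $d_M$, whose gradient has unit norm near $\partial M$, reducing the bound to $\int_0^\delta t^{-1/2}\,\mathcal H^{d-1}(d_M^{-1}(t))\,dt$ and the boundedness of the level-set volumes $\mathcal H^{d-1}(d_M^{-1}(t))$ for small $t$.
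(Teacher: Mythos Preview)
Your proof is correct and follows essentially the same approach as the paper's: both reduce the two memberships to the single fact that $d_M^{-1/2}\in L^1(M)$ via the boundedness of $\varphi$ and the identity $(d_M^{-1/2}\varphi)^2 d_M^{1/2}=d_M^{-1/2}\varphi^2$, and then verify $d_M^{-1/2}\in L^1(M)$ using boundary normal coordinates and the elementary convergence of $\int_0^\delta r^{-1/2}\,dr$. Your version is simply more detailed (explicit collar/complement split, Jacobian bound), but the argument is the same.
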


\begin{proof} Obviously $\varphi$ is bounded in $M$. Since $(d_{M}^{-1/2}\varphi)^2 d_{M}^{1/2}=d_{M}^{-1/2}\varphi^{2}$, we just need to prove that $d_{M}^{-1/2}$ is in $L^1(M)$. By taking local geodesic normal coordinates where $x_{n}$ denotes distance to the boundary, the lemma follows from the elementary observation
    \begin{align*}
	\int_{0}^{\varepsilon}x_{n}^{-1/2}\,dx_{n}<\infty
    \end{align*}
    since locally $d_{M}=x_{n}$.
\end{proof}

\begin{Remark}{\rm The same proof shows that $d_{M}^{-1/2}H^{s}(M)\subset  L^{2}(M,d_{M}^{1/2})$ as long as $s>\dim M/2$. The latter condition ensures that elements in $H^{s}(M)$ are continuous and hence bounded. 
}
\end{Remark}

We are now ready to give the proof of Theorem \ref{thm:mainiso}.

\begin{proof}[Proof of Theorem \ref{thm:mainiso}] By Theorem \ref{theorem:fred}, the map $r_{M}P$ is Fredholm with finite dimensional kernel and co-kernel independent of $s$; in fact elements in the kernel must be in $\mathcal{E}_{-1/2}(M)$, cf. \cite[Theorem 3.5]{Grubb1}.
Hence, it suffices to check that these kernel and co-kernel are trivial. We begin by proving that the kernel is trivial.

Suppose there is $u\in \mathcal{E}_{-1/2}(M)$ such that $r_{M}Pu=0$. Writing $u=e_{M}d_{M}^{-1/2}\varphi$ with $\varphi\in C^{\infty}(M)$, we see that $Nf=0$ where $f=d_{M}^{-1/2}\varphi \in L^{2}(M,d_{M}^{1/2})$  by Lemma \ref{lem:phiL1}. But $I^*_{w}I=d_{M}^{-1/2}N$, hence $I^*_{w}I f=0$.
This implies $(I^*_{w}I f,f)_{L^{2}(M,d_{M}^{1/2})}=0$ and hence $If=0$. To show that $f$ must in fact be smooth, extend $f$ by zero to $U_{1}$ and call the extension $f_{1}$. By Lemma \ref{lem:phiL1}, $f_1\in L^1(U_1)$ so that, using Santal\'o's formula, it is easy to see that $I_1 f_1$ makes sense in $L^1(\partial_+ SU_1)$ and also that $N_1 f_1 = I_1^* I_1 f$ makes sense in $L^1(U_1)$. 
Then $I_{1}f_{1}=0$ and thus $N_{1}f_{1}=0$. Since $N_{1}$ is elliptic,
the function $f_{1}$ must be smooth in $U_{1}$ and hence $f$ is smooth in $M$. Now we use the standard injectivity result for $I$ acting on smooth functions on a simple manifold \cite{Mu}
to conclude that $f=0$.

Let us now check that the co-kernel of $r_{M}P$ is trivial. 
Consider the injection
\[\iota: H^{s}_{M}(S)\hookrightarrow H^{-1/2(s)}(M),\]
where $H_{M}^{s}(S)$ consists of elements in $H^{s}(S)$ with support in $M$ (cf. Subsection \ref{section:S+H}). Let us compute $(r_{M}P\iota)^*$.
The point of using $\iota$ is to end up with standard dualities not involving the H\"ormander spaces. Note
\[(r_{M}P \iota)^*:(H^{s+1}(M))^*\to (H^{s}_{M}(S))^*\]
 where
 \[(H^{s+1}(M))^*=H^{-s-1}_{M}(S),\]
 \[(H^{s}_{M}(S))^*=H^{-s}(M)\]
 are the standard dualities. Take $u\in H^{-s-1}_{M}(S)$ and $f\in H^{s}_{M}(S)$ and observe
 \begin{align}
     (r_{M}Pf,u)_{M}=(Pf,u)_{S}=(f,P_{-s-1}u)_{S}=(f,r_{M}Pu)_{M}.
     \label{eq:tmp}
 \end{align}
 Thus $\iota^* (r_{M}P)^*=(r_{M}P \iota)^*=r_{M}P$. Hence if $u\in H^{-s-1}_{M}(M)$ is such that
 $(r_{M}P)^*u=0$ we see that $r_{M}Pu=0$. By ellipticity $u\in\mathcal {E}_{-1/2}(M)$ and since we have already proved injectivity of $r_{M}P$ on this space we deduce that the co-kernel of $r_{M}P$ is trivial as well. 
 Thus $r_{M}P:\mathcal{E}_{-1/2}(M)\to C^{\infty}(M)$ is a bijection and since $N=r_{M}Pe_{M}$, from the definition of $\mathcal{E}_{-1/2}(M)$ we conclude that $N:d_{M}^{-1/2}C^{\infty}(M)\to C^{\infty}(M)$ is also a bijection.
\end{proof}

\proof[Proof of Theorem \ref{main0}] For the case of $I$, part a) in Theorem \ref{main0} follows immediately from Theorem \ref{thm:mainiso}. Part b) in Theorem \ref{main0} is a direct consequence of Lemma \ref{lemma:weight} and the fact that
$d_{M}^{-1/2}h\in L^{2}(M,d_{M}^{1/2})$ if $h\in C(M)$.

To complete the proof of Theorem \ref{main0} we just need to explain why the same proof works for the attenuated geodesic X-ray transform $I_{a}$.
The microlocal properties of $N_{a}=I_{a}^{*}I_{a}$ are studied in detail in \cite{FSU}. With this in hand, it is straightforward to check that $N_{a}$ will fit the theory developed above. For this we need to extend $a\in C^{\infty}(M)$ smoothly to $S$ and observe that the third proof of Lemma \ref{lem:even} applies to $N_{a}$ since these operators are covered by \cite[Lemma B.1]{DPSU}, see \cite[Section 4]{FSU} for a proof.  Hence the full symbol of $N_{a}$ is also even,
$N_{a,1}$ satisfies the transmission condition with $\mu=-1/2$ with respect to $\partial M$,  and we can derive all the required mapping properties. The proof of Lemma  \ref{lemma:weight} 
works for $I_{a}$ and Theorem \ref{thm:mainiso} holds as well for $I_{a}$ as long as we know that $I_{a}$ is injective on smooth functions.
\qed

\smallskip

{\small{\noindent {\bf Acknowledgments.} The authors are grateful to Gerd Grubb, Plamen Stefanov and Gunther Uhlmann for helpful conversations and comments regarding the microlocal aspects of this paper. The authors also thank Sarah Vall\'elian for sharing insights and code pertaining to \cite{BSV}, and Matteo Giordano and Hanne Kekkonen for proofreading parts of the manuscript. FM was partially funded by NSF grant DMS--1712790. RN was supported by ERC grant UQMSI/647812. GPP was partially funded by EPSRC grant EP/M023842/1.}

\end{document}